\author[Medina,\\ Zald\'ivar, \\Sandoval]{Mauricio Gabriel Medina-B\'arcenas \\ Luis Angel Zald\'ivar-Corichi \\ Martha Lizbeth Shaid Sandoval-Miranda}
\title{ A generalization of quantales with applications to modules and rings }
\theoremstyle{plain}
\newtheorem{thm}{\protect\theoremname}[section]
\theoremstyle{definition}
\newtheorem{rem}[thm]{\protect\remarkname}
\theoremstyle{definition}
\newtheorem{example}[thm]{\protect\examplename}
\theoremstyle{definition}
\theoremstyle{plain}
\newtheorem{cor}[thm]{\protect\corollaryname}
\theoremstyle{plain}
\newtheorem{lemma}[thm]{\protect\lemmaname}
\theoremstyle{plain}
\newtheorem{prop}[thm]{\protect\propositionname}
\theoremstyle{definition}
\newtheorem{dfn}[thm]{\protect\definitionname}
\newcommand{\RMod}{{R{-}Mod}}
\newcommand{\Hom}{\mathrm{Hom}}
\newcommand{\R}{{R}}
\newcommand{\Rpr}{\R{-}pr}
\newcommand{\I}{\underline{1}}
\providecommand{\lemmaname}{Lemma}
\providecommand{\corollaryname}{Corollary}
\providecommand{\propositionname}{Proposition}
\providecommand{\definitionname}{Definition}
\providecommand{\theoremname}{Theorem}
\providecommand{\remarkname}{Remark}
\providecommand{\examplename}{Example}
\begin{document}

\begin{abstract}
In this paper we introduce a lattice structure as a generalization of meet-continuous lattices and quantales. We develop a point-free approach to these new lattices and apply these results to $R$-modules. In particular, we give a module counterpart of the well known result that in a commutative ring the set of semiprime ideals, that is, radical ideals is a frame. 
\end{abstract}
\maketitle

\section{Introduction}\label{sec:sec1}
In general, for the study of rings and modules, the complete lattices of submodules play an important role, there are various forms for the examination of these lattices, like via pure lattice theory or in a categorical point of view. In the study of rings there are other lattice-structures in play, like preradicals, torsion theories, linear filters etc. In the eye of this doctrine, we introduce a framework of lattice structure theory to analyze the submodules of a given module and preradicals on a ring, in particular we specialize in the lattice of submodules and we give some characterizations of these.      

In \cite{H} and \cite{Ro}, the authors introduce some order-lattice structures for studying  rings, topological spaces and more. In particular, in \cite{H}, the author considers lattices equipped with a product in which an absolute distribution holds with respect to take suprema and certain comparisons between the $\wedge$ and this product must be hold. He calls these kind of lattices \emph{carries}, an by the definition these ones are particular examples of \emph{quantales}. From this theory one can describe interesting phenomena of ring theory like \emph{radical theory}. This is one of our motivations in the scope of a more general situation, which is determinate radicals in the sense of rings but now for modules. Other important structures for studying rings and modules are \emph{idioms} and \emph{frames}. Recall that idioms are complete modular meet-continuous lattices, therefore they are a kind of generalization of frames. Another generalization of frames are quantales. With this in mind, one of our first goals is to introduce a generalization of idioms and quantales in which both of them are particular cases.

This paper is organized as follows: in section \ref{sec:sec2} we give a brief introduction of the notation and some examples that motivate our study, in section \ref{sec:sec3} we introduce a general framework for the study of quantales and idioms, we generalize some results of localic nuclei in quantale theory. In last section, we apply these notions to the idiom of submodules and we generalize some classical structure theorems in commutative ring theory  .

\section{Preliminaries and background material}\label{sec:sec2}

 Throughout this paper,  we will work with complete lattices in the usual sense. Recall that a $\bigvee$-semilattice is an structure $(A,\leq,\bigvee,0)$, where $(A,\leq)$ is a partial ordered set with bottom element $0$ and a binary operation $\vee$ such that $x\vee y\leq z$ if and only if $x\leq z \text{ and } y\leq z$, for all, $a,y,z\in A.$ Also all the $\bigvee$-semilattices we considered here are complete with respect $\vee$. A morphisms of $\bigvee$-semilattices, $f\colon A\rightarrow B$ is a monotone function that preserves arbitrary suprema, that is,  $f[\bigvee X]=\bigvee f[X]$ for all $X\subseteq A$. Then, by the adjoint functor theorem it follows that   any morphism of $\bigvee$-semilattices, namely $f\colon A\rightarrow B$, has a right adjoint $f_{*}\colon B\rightarrow A.$ Denoting $f=f_{*},$ the composition $d=f_{*}f^{*}\colon A\rightarrow A$ is a closure operation, that is, $d$ is a monotone function, $a\leq d(a)$ and $dd=d.$ It can be shown that any of these operators rises naturally in this way.
    
Here our analysis is concentrated in certain kind of $\bigvee$-semilaticces. Those are the following:

A lattice $A$ is {\em meet-continuous} if 
\[a\wedge \left(\bigvee X\right)=\bigvee\left\{a\wedge x\mid x\in X\right\}\leqno({\rm IDL}),\]
for all $a\in A$ and $X\subseteq A$ any directed set. Here, $X$ is {\em directed} if it is non-empty and for each $x,y\in X$ there is some $z\in X$ with $x\leq z$ and $y\leq z$.

A lattice $A$ is \emph{modular} if 
$(a\vee c)\wedge b=a\vee(c\wedge b),$ for all $a,b,c\in A$ such that $a\leq b$.

An \emph{idiom} is a meet-continuous modular lattice. An important class of idioms are the well known \emph{frames}. Recall that a complete lattice $A$ is a \emph{frame} if $$a\wedge \left(\bigvee X\right)=\bigvee\left\{a\wedge x\mid x\in X\right\}\leqno({\rm FDL})$$ holds for every $a\in A$ and every $X\subseteq A$.  

On the other hand,  for any lattice  $A,$ an \emph{implication}  is a two placed operation $(\_\succ \_)$ given by $x\leq (a\succ b)\Leftrightarrow x\wedge b\leq a$, for all $a,b\in A.$ 

The distributivity law (FDL) characterizes frames as follows.

\begin{prop}\label{p1}
A complete lattice $A$ is a frame if and only if $A$ has an implication.
\end{prop}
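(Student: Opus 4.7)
The plan is to view the implication as an adjointness statement: the defining equivalence $x\leq (a\succ b) \Leftrightarrow x\wedge b\leq a$ says exactly that, for each fixed $b$, the map $(-)\wedge b\colon A\to A$ has a right adjoint, namely $a\mapsto (a\succ b)$. Once phrased this way, both directions reduce to the familiar principle that left adjoints between complete lattices preserve arbitrary suprema, which is the content of (FDL).

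For the direction ($\Leftarrow$), assume $A$ has an implication. Fix $a\in A$ and $X\subseteq A$. The inequality $\bigvee\{a\wedge x\mid x\in X\}\leq a\wedge\bigvee X$ is automatic. For the reverse, set $y=\bigvee\{a\wedge x\mid x\in X\}$. For every $x\in X$ we have $x\wedge a\leq y$, so by the defining equivalence $x\leq (y\succ a)$. Taking the supremum over $x\in X$ gives $\bigvee X\leq (y\succ a)$, and applying the equivalence once more yields $a\wedge\bigvee X\leq y$. This is (FDL), so $A$ is a frame.

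For the direction ($\Rightarrow$), assume $A$ is a frame. For $a,b\in A$ define
\[
(a\succ b):=\bigvee\{x\in A\mid x\wedge b\leq a\}.
\]
Using (FDL) applied to the set $S=\{x\mid x\wedge b\leq a\}$, we compute $(a\succ b)\wedge b=\bigvee\{x\wedge b\mid x\in S\}\leq a$, so $(a\succ b)$ itself lies in $S$ and is in fact its largest element. From this the equivalence $x\leq (a\succ b)\Leftrightarrow x\wedge b\leq a$ is immediate: the forward implication follows by monotonicity of $(-)\wedge b$ together with $(a\succ b)\wedge b\leq a$, and the backward implication is just the definition of supremum.

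There is no real obstacle here; the only mild care needed is making sure the asymmetric notation $(a\succ b)$ (with $b$ as antecedent and $a$ as consequent) is handled correctly, and invoking (FDL) precisely at the point where one needs $(a\succ b)\wedge b\leq a$ in the forward direction. Everything else is formal manipulation of the Galois connection $(-)\wedge b\dashv (-\succ b)$.
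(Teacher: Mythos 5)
Your proof is correct. The paper itself does not prove Proposition~\ref{p1}; it refers to \cite{H2}, \cite{Pi} and \cite{H3}, where the standard argument is exactly the one you give: the implication is the right adjoint of $(-)\wedge b$, the frame law (FDL) says each $(-)\wedge b$ preserves suprema, and the two directions follow from the Galois-connection formalism. You also handle the paper's asymmetric convention $x\leq (a\succ b)\Leftrightarrow x\wedge b\leq a$ correctly (indeed more carefully than the paper's own remark on negation, since under that convention the pseudocomplement of $a$ is $(\underline{0}\succ a)$ rather than $(a\succ\underline{0})$).
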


This implies that in a frame $A$ any element $a\in A$ has a \emph{negation} $(a\succ \underline{0})$ or simply $\neg a$. A proof of \ref{p1} and general background about idioms and frames can be viewed in  \cite{H2}, \cite{Pi} and\cite{H3}.
Some interesting examples of idioms and frames are: 
\begin{example}\noindent
\begin{enumerate}
\item  Given an associative unital ring $R$ consider the category of left $R$-modules, $\RMod$ and for every $M\in\RMod$, denote by $\Lambda(M)$ the left submodules of $M.$ It is well known that $\Lambda(M)$ is a complete meet-continuous modular lattice, whence an idiom.
\item Consider any topological space $S$ and denote by $\mathcal{O}(S)$ the open subsets of $S.$ This is the generic non-trivial example of a frame.
\end{enumerate}
\end{example}

So any morphism of idioms (in particular of frames) as subcategories of the category of $\bigvee$-semilattices gives closure operators, we shall summarize systematically the definition of these operators.

An \emph{inflator} in a idiom $A$ is a function $d\colon A\rightarrow A$ such that $x\leq d(x)$ and $ x\leq y \Rightarrow d(x)\leq d(y)$. 
A \emph{pre-nucleus} $d$ on $A$ is an inflator such that $d(x\wedge y)=d(x)\wedge d(y)$. 
A \emph{stable} inflator on $A$ is an inflator such that $d(x)\wedge y\leq d(x\wedge y)$ for all $x,y\in A$.   
Let us denote by $I(A)$ the set of all inflators on $A$ and let $P(A)$ be the set of all pre-nuclei and $S(A)$ the set of all stable inflators. Clearly, $P(A)\subseteq S(A)\subseteq I(A)$. 
Note that from the definition of inflator, the composition of any two inflators is again an inflator. In fact, $I(A)$ is a poset with the order given for $d,d'\in I(A)$ by $d\leq d'\Leftrightarrow d(a)\leq d'(a)$, for all $a\in A$. The identity $d_{\underline{0}}$ of $A$  and the constant function $\bar{d}(a)=\bar{1}$ for all $a\in A$, are inflators.  Furthermore, these ones are pre-nuclei. 

For any two inflators $d,d'$ on $A$ and for all $a\in A$ we have $a\leq d(a)$. From the monotonicity of $d'$ it follows that $d'(a)\leq d'(d(a)),$ and again we have $d'(d(a))\geq d(a)$. Therefore,  if $A$ is an idiom, then for any two inflators $d,d'\in I(A)$ we have that $d'\vee d\leq d'd$. It follows the next:

\begin{lemma}\label{p2} If  $A$ is an idiom and $d$, $d'$, $k$ are inflators over $A$, then:
\begin{enumerate}
\item If $d\leq d'$, then $kd\leq kd'$ and $dk\leq d'k$.
\item $kd'\vee kd\leq k(d'\vee d)$ and $ k(d'\wedge d)\leq kd'\wedge kd$. 
\item Moreover, if $\mathcal{D}\subseteq I(A)$ is non empty, then:
\begin{enumerate}[(a)]
\item $(\bigvee \mathcal{D})k=\bigvee\left\{dk\mid d\in \mathcal{D}\right\}$,
\item $(\bigwedge \mathcal{D})k=\bigwedge\left\{dk\mid d\in \mathcal{D}\right\}$.
\end{enumerate}
\end{enumerate}
\end{lemma}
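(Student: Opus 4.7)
The plan is to reduce every statement to a pointwise computation in $I(A)$, so the first thing I would record is that binary (and arbitrary) joins and meets of inflators are computed pointwise. Concretely, for a nonempty family $\{d_i\}_{i\in I}\subseteq I(A)$, the assignments $a\mapsto \bigvee_i d_i(a)$ and $a\mapsto \bigwedge_i d_i(a)$ each define an inflator: extensivity $a\leq d_i(a)$ survives both $\bigvee$ and $\bigwedge$, and monotonicity follows because if $a\leq b$ then $d_i(a)\leq d_i(b)$ for each $i$, and sups/infs of monotone families are themselves monotone. A routine check against the universal property shows these are indeed the join and meet in the poset $I(A)$. Once this is in hand the rest is bookkeeping.

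For (1), suppose $d\leq d'$, i.e., $d(a)\leq d'(a)$ for every $a\in A$. Then $(dk)(a)=d(k(a))\leq d'(k(a))=(d'k)(a)$, so $dk\leq d'k$. For $kd\leq kd'$ I apply the monotone function $k$ to $d(a)\leq d'(a)$, obtaining $k(d(a))\leq k(d'(a))$, i.e., $(kd)(a)\leq (kd')(a)$.

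For (2), since $d\leq d\vee d'$ and $d'\leq d\vee d'$, part (1) yields $kd\leq k(d\vee d')$ and $kd'\leq k(d\vee d')$, hence $kd\vee kd'\leq k(d\vee d')$. Dually, $d\wedge d'\leq d$ and $d\wedge d'\leq d'$ give $k(d\wedge d')\leq kd$ and $k(d\wedge d')\leq kd'$, hence $k(d\wedge d')\leq kd\wedge kd'$.

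For (3), evaluate both sides at an arbitrary $a\in A$ and invoke the pointwise description of $\bigvee$ and $\bigwedge$ in $I(A)$ twice:
\[
\Bigl(\bigl(\bigvee\mathcal{D}\bigr)k\Bigr)(a)=\bigl(\bigvee\mathcal{D}\bigr)(k(a))=\bigvee_{d\in\mathcal{D}} d(k(a))=\bigvee_{d\in\mathcal{D}}(dk)(a)=\Bigl(\bigvee\{dk\mid d\in\mathcal{D}\}\Bigr)(a),
\]
and analogously with $\bigwedge$ in place of $\bigvee$. Thus both (3)(a) and (3)(b) follow. The only mild subtlety — really the only place where one needs to be careful — is the pointwise computation of arbitrary meets and joins in $I(A)$; once that is secured the four items are essentially tautological, and notably they do not require modularity or meet-continuity of $A$ (these will be needed later, when composition is pushed through joins on the \emph{other} side).
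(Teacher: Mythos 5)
Your proof is correct and takes essentially the same route as the paper, which states Lemma \ref{p2} without an explicit proof, treating it as the routine consequence of the pointwise order (and hence pointwise joins and meets) on $I(A)$ that you spell out. Your observation that neither modularity nor meet-continuity is actually used here is also accurate.
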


The properties of the composition of inflators gives a crucial construction:

Given an inflator $d\in I(A)$, let $d^{0}:=d_{\underline{0}}$,  $d^{\alpha+1}:=d\circ d^{\alpha}$ for a non-limit ordinal $\alpha$, and let  $d^{\lambda}:=\bigvee\left\{d^{\alpha}\mid \alpha<\lambda\right\}$ for a limit ordinal $\lambda$.
These are inflators, and from the comparison $d\vee d'\leq dd'$, we have a chain of inflators 
$$d\leq d^{2}\leq d^{3}\leq\ldots \leq d^{\alpha}\leq\ldots.$$ 
By a cardinality argument, there exists an ordinal $\gamma$ such that $d^{\alpha}=d^{\gamma}$, for $\alpha\geq \gamma$. In fact, we can choose $\gamma$ the least of these ordinals, say $\infty$. Thus, $d^{\infty}$ is an inflator which not only satisfies $d\leq d^{\infty}$ but also  $d^{\infty}d^{\infty}=d^{\infty}$, that is, $d^{\infty}$ is an idempotent or  closure operator over $A$. Denote by $C(A)$ the set of all closure operators. This is a poset in which the infimum of a set of closure operators is again a closure operator. Hence, it is a complete lattice, and   the construction we just made defines an operator $(\_)^{\infty}\colon I(A)\rightarrow C(A)$. It is clear that this operation (in the second level) is inflatory and monotone.
 The supremum in $C(A)$ for an arbitrary family of closure operators can be computed as follows:  first, take a non-empty subset of closure operators $\mathcal{C}$ 
on $A$.  By the observations of the supremum, $\bigvee \mathcal{C}$ is an inflator, so we can apply the construction above and obtain a closure operator $(\bigvee \mathcal{C})^{\infty},$  this is the supremum of the family $\mathcal{C}$ in $C(A)$.  For an inflator $j$ over an idiom $A$, we say that $j$ is a \emph{nucleus} if is an idempotent pre-nucleus.  By induction and  distributivity on idioms, it follows that if $A$ is an idiom, then: 
\begin{enumerate}
 
\item If $f$ is a pre-nucleus on $A$, then $f^{\alpha}$ is a pre-nucleus. In particular, $f^{\infty}$ is a nucleus.
\item If $f$ is a stable inflator, then each $f^{\alpha}$ is a pre-nucleus, and for limit ordinals $\lambda$, $f^{\lambda}$ is a pre-nucleus. In particular, $f^{\infty}$ is a nucleus.
\end{enumerate}
One of the most important observations in the inflator theory is:

\begin{thm}\label{p3}
For any idiom $A$, denote by $N(A)$ the set of all nuclei on $A$ then, the complete lattice $N(A)$ is a frame.
\end{thm}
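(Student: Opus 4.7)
The plan is to realize $N(A)$ as a complete lattice sitting inside $I(A)$ and then produce implications in order to invoke Proposition \ref{p1}. Meets in $N(A)$ are computed pointwise: for $\{j_i\}_{i\in I}\subseteq N(A)$, the map $(\bigwedge_i j_i)(a):=\bigwedge_i j_i(a)$ is plainly an inflator, preservation of binary meets is a routine inequality chase from the corresponding property of each $j_i$, and idempotency follows from $(\bigwedge_i j_i)(\bigwedge_i j_i(a))\leq j_k(j_k(a))=j_k(a)$ valid for every $k$.

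For joins I apply $(-)^\infty$ to the pointwise sup. Given $\{j_i\}_{i\in I}\subseteq N(A)$, put $d=\bigvee_i j_i\in I(A)$; I claim $d^\infty\in N(A)$. Since $d^\infty$ is a closure operator by construction, it remains to verify it is stable, because a standard check shows that an idempotent stable inflator preserves binary meets. The key inequality is $d(x)\wedge y\leq d^\infty(x\wedge y)$. Writing $d(x)=\bigvee_F\bigvee_{i\in F}j_i(x)$ as a directed sup over finite $F\subseteq I$, meet-continuity turns the left-hand side into $\bigvee_F\left(\left(\bigvee_{i\in F}j_i\right)(x)\wedge y\right)$. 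For $F=\{i_1,\ldots,i_n\}$, the comparison $d'\vee d''\leq d'd''$ noted before Lemma \ref{p2} gives $\bigvee_{i\in F}j_i\leq j_{i_1}\circ\cdots\circ j_{i_n}$; since a composition of pre-nuclei is immediately a pre-nucleus and hence stable, one obtains
\[
\left(\bigvee_{i\in F}j_i\right)(x)\wedge y\leq (j_{i_1}\circ\cdots\circ j_{i_n})(x\wedge y)\leq d^n(x\wedge y)\leq d^\infty(x\wedge y).
\]
Taking the sup over $F$ yields the key inequality, and a transfinite induction on $\alpha$ — the key inequality applied with $d^\alpha(x)$ in place of $x$ at successors, meet-continuity on the chain $\{d^\beta(x)\}_{\beta<\lambda}$ at limits — extends it to $d^\alpha(x)\wedge y\leq d^\infty(x\wedge y)$ for every $\alpha$. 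Hence $d^\infty$ is stable, hence a nucleus, and it is routine to identify it with the join of $\{j_i\}$ in $N(A)$.

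For the frame law I construct implications. Given $j,k\in N(A)$, let $S=\{l\in N(A)\mid l\wedge j\leq k\text{ pointwise}\}$ and set $(j\succ k):=\bigvee_{N(A)}S$. The same directed-sup/composition reasoning used in the previous paragraph shows $(j\succ k)\wedge j\leq k$ pointwise, so $(j\succ k)$ is the largest element of $S$, yielding the adjunction $l\leq (j\succ k)\Leftrightarrow l\wedge j\leq k$. Proposition \ref{p1} then concludes. The main obstacle throughout is that $A$ is merely modular, not distributive, so even the finite pointwise sup of two nuclei may fail to preserve binary meets; the essential device is the bound $d\vee d'\leq dd'$, which replaces finite sups by compositions (which do inherit the pre-nucleus property), after which meet-continuity transfers stability through the transfinite iteration into $d^\infty$.
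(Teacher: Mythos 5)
Your construction of the complete lattice structure on $N(A)$ is sound: pointwise meets, joins as $d^{\infty}$ for $d=\bigvee_i j_i$, the finite-subsup/composition device, and the transfinite induction transferring stability to $d^{\infty}$ are all correct (and an idempotent stable inflator is indeed a nucleus). The gap is in the last step, which is the actual heart of the theorem. To get $(j\succ k)\wedge j\leq k$ you cannot simply invoke ``the same reasoning'': after decomposing the pointwise sup of $S$ into finite subsups and bounding $\bigvee_{l\in F}l$ by the composite $l_1\circ\cdots\circ l_n$, what you need is the new fact that the property ``$(-)\wedge j\leq k$'' is preserved by composition: if $f$ is a pre-nucleus with $f\wedge j\leq k$ and $g$ is an inflator with $g\wedge j\leq k$, then $fg\wedge j\leq k$. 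This does not follow from ``compositions of pre-nuclei are pre-nuclei''; its proof uses a different trick, namely evaluating the hypotheses at the point $k(x)$ and using idempotency of $k$:
\[
j(x)\wedge f(g(x))\leq f(j(x))\wedge f(g(x))=f\bigl(j(x)\wedge g(x)\bigr)\leq f(k(x)),
\qquad
j(x)\wedge f(g(x))\leq j(x)\leq j(k(x)),
\]
hence $j(x)\wedge f(g(x))\leq (f\wedge j)(k(x))\leq k(k(x))=k(x)$. Without this lemma even the base case $d\wedge j\leq k$ (for $d=\bigvee S$ pointwise) is unproved, since in a merely modular lattice you cannot distribute $j(x)$ over the finite join $l_1(x)\vee\cdots\vee l_n(x)$, and bounding that join by the composite buys you nothing until you know the composite again satisfies the defining inequality. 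Moreover, your successor-step device from the stability argument does not transfer: substituting $d^{\alpha}(x)$ for $x$ in ``$d\wedge j\leq k$'' only yields $d(d^{\alpha}(x))\wedge j(d^{\alpha}(x))\leq k(d^{\alpha}(x))$, and no idempotency of $d^{\infty}$ is available to bring $k(d^{\alpha}(x))$ back down to $k(x)$.

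Once the composition lemma is in place the theorem follows, and more cleanly than via $(-)^{\infty}$: take $G=\{f\mid f \text{ a pre-nucleus},\ f\wedge j\leq k\}$; $G$ is closed under composition, hence directed; $h=\bigvee G$ (pointwise) is a pre-nucleus by meet-continuity, satisfies $h\wedge j\leq k$ by meet-continuity, and $h\circ h\in G$ forces $h$ to be idempotent, so $h$ is the implication in $N(A)$ and Proposition \ref{p1} applies. (Alternatively, your route can be patched: at each successor ordinal re-run the finite-composite decomposition of $d$ and apply the lemma with $g=d^{\alpha}$.) Note that this missing computation is precisely the one the paper itself performs in its quasi-quantale analogue, Proposition \ref{06}, where $G=\{f\mid f\wedge k\leq j\}$ is shown closed under composition by exactly this argument; the paper states Theorem \ref{p3} without proof, but that is the intended technique.
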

Most of the structural analysis of idioms is related with the frame $N(A)$, as ranking techniques, dimensions and other point free-theoretical constructions. In particular, every nucleus $j$ on $A$ determines a quotient of the idiom, the set of all fixed points of $j$, $A_{j}$ together with an idiom morphism $j^{*}\colon A\rightarrow A_{j}$ given by $j^{*}(a)=j(a)$.  

There are other $\bigvee$-semilattices structures that we need to mention.

Let $Q$ be a $\bigvee$-semilattice, $Q$ is a \emph{quantale} if $Q$ has a binary associative operation $\cdot: Q\times Q\rightarrow Q$ such that
 \[l\left(\bigvee X\right)=\bigvee\left\{lx\mid x\in X\right\} , \mbox{ and } \leqno({\rm LQ}) \] 
 \[\left(\bigvee X\right)r=\bigvee\left\{xr\mid x\in X\right\}\leqno({\rm RQ})\] 

\noindent hold for all $l, r\in Q$ and $X\subseteq Q.$ 

Notice that  if In the previous definition we  assume  meets instead of joins, then we obtain the dual quantale with respect to meets.

In case that only {\rm (LQ)} (respectely {\rm (RQ)}) holds, some authors just say that $Q$ is a  left quantale (respectely right quantale).

Note that any frame is a quantal, with the operation $\wedge$, in fact:

\begin{prop}\label{p4}
Let $(Q, \leq, \bigvee, 0, \cdot)$ be a quantale. Then,  $Q$ is a frame if and only if $(Q,\cdot,e)$ is a monoid with unital element $e=1$ and each element is idempotent with respect $\cdot$, here $1$ is the top of the complete lattice $(Q,\leq, \bigvee, 0, 1)$. 
\end{prop}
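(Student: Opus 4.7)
The plan is to establish the two directions separately, noting that the nontrivial content is that the monoid/idempotent conditions force the product to coincide with $\wedge$.

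For the forward direction I would simply observe that if $Q$ is a frame then, taking $\cdot$ to be $\wedge$ (as in the comment preceding the proposition), associativity of meets, the identity $1\wedge a = a$, and $a\wedge a = a$ give exactly the monoid-with-unit-$1$ and universal idempotency properties. So this direction is a direct verification.

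For the converse, assume $\cdot$ is associative with unit $e=1$ (the top) and that $a\cdot a = a$ for every $a\in Q$. The key step is to show $\cdot = \wedge$. First I would establish that $\cdot$ is monotone in each argument, which is immediate from (LQ) and (RQ): if $b\le c$ then $b\vee c = c$, so $a\cdot c = a\cdot(b\vee c) = (a\cdot b)\vee(a\cdot c)$, giving $a\cdot b\le a\cdot c$. Applying this with $b\le 1$ and $1$ the unit yields $a\cdot b\le a\cdot 1 = a$, and symmetrically $a\cdot b\le 1\cdot b = b$, so $a\cdot b\le a\wedge b$. For the reverse inequality, $a\wedge b\le a$ and $a\wedge b\le b$ give, by monotonicity, $(a\wedge b)\cdot(a\wedge b)\le a\cdot b$, and by the idempotency hypothesis the left side equals $a\wedge b$. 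Hence $a\cdot b = a\wedge b$.

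With $\cdot$ identified as $\wedge$, the left distributivity axiom (LQ) reads
\[
a\wedge\Bigl(\bigvee X\Bigr) = \bigvee\{a\wedge x\mid x\in X\},
\]
for all $a\in Q$ and $X\subseteq Q$, which is precisely (FDL). Therefore $Q$ is a frame.

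The only step that requires any care is deriving monotonicity of $\cdot$ from the join-distributivity axioms and assembling the two inequalities $a\cdot b\le a\wedge b$ and $a\wedge b\le a\cdot b$; everything else is bookkeeping. I do not expect any serious obstacle, since the idempotency hypothesis combined with the top element serving as unit essentially collapses the product onto the meet.
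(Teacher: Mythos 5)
Your proof is correct: the converse direction (unit $=1$ plus idempotency forces $a\cdot b=a\wedge b$, after which (LQ) is literally (FDL)) is the standard argument, and your reading of the forward direction --- that ``$Q$ is a frame'' refers to the quantale whose product is $\wedge$, as in the remark preceding the proposition --- is the intended one. The paper states this proposition without proof, so there is no alternative argument to compare against; your write-up supplies exactly the missing verification.
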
   

Now the morphisms of quantales are $\bigvee$-morphisms such that respect the product. As in the case of idioms, we have the concept of inflator. In particular, we have that an inflator $d:Q\rightarrow Q$ on a quantale is  \emph{stable} if $ld(x)\leq d(lx)$ and $d(x)r\leq d(xr)$ for all $x,l,r\in Q.$ A \emph{pre-nucleus} in $Q$ is an inflator $d$ such that $d(x)d(y)\leq d(xy)$ holds for all $x,y \in Q.$ In this sense, a \emph{quantic nucleus} on $Q$ is an idempotent pre-nucleus $j$ on $Q.$ In general, the set of all nuclei $N(Q)$ is a complete lattice which is not a quantale, but the complete lattice structure ensures that there exists a unique quantic nucleus $\sigma\in N(Q)$ such that for each quantic nucleus $k\in N(Q)$, the set of fixed points of $k$, $Q_{k}$, that is a quotient of $Q$, is a frame if and only if $\sigma \leq k.$ The latter is proved in \cite[Lemma 3.2.5]{Ro}. There are more interesting families of inflators controlling the product and its comparison with  $\wedge$  on $Q$.

Some examples of quantales, as we mentioned before, are frames. Other examples come from ring theory. The lattice of left ideals, the one of right ideals and that of two sided ideals with the usual product, are examples  of quantales.  On the other hand, notice that $R-$fil the  lattice of all preradical filters with the operation of Gabriel multiplication of preradical filters satisfies (LQ), i.e. is a left quantale. See \cite[3.13 Proposition]{Golan}.

\section{Quasi-quantales}\label{sec:sec3}
\begin{dfn}\label{a}
	Let $A$ be a $\bigvee$-semilattice. We say that $A$ is a \emph{quasi-quantale} if it has an associative product $A\times{A}\rightarrow{A}$ such that for all directed subsets $X,Y\subseteq{A}$ and $a\in{A}$: 
	\[\left(\bigvee X\right)a=\bigvee\{xa\mid x\in X\},\leqno({\rm RDQ})\] and 
	\[a\left(\bigvee Y\right)=\bigvee\{ay\mid y\in Y\}.\leqno({\rm LDQ})\]
	We say $A$ is a \emph{left-unital  (resp. right-unital, resp. bilateral-unital)  quasi-quantale} if there exists $e\in{A}$ such that $e(a)=a$ (resp. $(a)e=a$, resp. $e(a)=a=(a)e$) for all $a\in{A}$. 
\end{dfn}
\begin{example}$\;$\label{exp1}
\begin{enumerate}
	\item If $A$ is an idiom, then $A$ is a bilateral-unital  quasi-quantale with $\wedge\colon A\times{A}\rightarrow{A}$.
	\item Let $R$ be a ring with identity and let $Bil(R)=\{I\subseteq{R}\mid I\text{ bilateral ideal }\}$. Then $Bil(R)$ is a bilateral-unital quasi-quantale with the product of ideals. Notice that $RI=I=IR$ for all $I\in Bil(R)$. Moreover, $Bil(R)$ satisfies
	\[\left(\sum_\EuScript{J}{I_i}\right)J=\sum_\EuScript{J}{(I_{i}J)}\]
	and
	\[J\left(\sum_\EuScript{J}{I_i}\right)=\sum_\EuScript{J}{(JI_i)},\]
for all $\{I_i\}_\EuScript{J}\subseteq{Bil(R)}$.

	Now, note that $\Lambda(R)=\{I\subseteq{R}\mid I\text{ is a left ideal }\}$ satisfies the latter distributive laws but it is just a left-unital  quasi-quantale.
	\item Let $R$-pr be the big lattice of preradicals on $\RMod,$ where the order is given as follows: $r,s\in\Rpr$ if and only if $r(M)\leq s(M)$ for all $M\in\RMod$. Their infimum $r\wedge s$  and supremum $r\vee s$ are defined as  $(r\wedge s)(M)=r(M)\cap s(M))$ and $(r\vee s)(M)=r(M)+ s(M))$  for every $M\in\RMod,$ respectively. In fact, by \cite[Theorem 8, 1.(b)]{pr} it follows that $R-$pr is a  bilateral-unital quasi-quantale  which is not a quantale with  $\wedge$ as product.  
	
On the other hand, given $r,s\in \Rpr$ consider  their product $rs$ which is defined as $(rs)(M)=r(s(M)),$ for every $M\in\RMod.$ Notice that  $\Rpr$ with this product just satisfies  {\rm (RQD)}
. See \cite[Theorem 8, 2.(b)]{pr}.
	\item Let $A$ be a lattice. The set of all inflators on $A$, denoted by $D(A)$   just satisfies (RQ) so in particular (RDQ). See  Lemma \ref{p2} (3) (a) and \cite{H3}.
	
	\item Let $M$ be a left $R$-module. 
	Given $N,L\in{\Lambda(M)}$, in \cite{B} was defined the product
	\[N_ML=\sum\{f(N)|f\in\Hom_{\R}(M,L)\}.\]
	In general, this product is not associative but if $M$ is projective in $\sigma[M]$ then it is. Therefore, if $M$ is projective in $\sigma[M]$, then $\Lambda(M)$ is a quasi-quantale. In fact, for all subset $\{N_i\}_\EuScript{J},$ 
	\[(\sum_{\EuScript{J}}{N_i})_{M}L=\sum_{\EuScript{J}}{({N_{i}}_{M}L)}.\]
	Let $\Lambda^{fi}(M)=\{N\leq{M}\mid N\text{ is fully invariant }\}$, that is, $f(N)\subseteq N$ for any endomorphism $f$ of $M$. Notice that $\Lambda^{fi}(M)$ is a right-unital quasi-quantale, because $N_MM=N$ for all $N\in{\Lambda^{fi}(M)}$. This example will be detailed in section \ref{sec4}.	
	
	\item 
	Any quantale is a quasi-quantale.
	\item The  lattice of all preradical filters, denoted by $R-$fil  with the operation of Gabriel multiplication of preradical filters  satisfies (LQ) and (RDQ), respectively. Therefore, $R-fil,$ is a quasi-quantale which is not a quantale.
See \cite[3.13 Proposition]{Golan}.

\end{enumerate}
\end{example}

\begin{prop}\label{003}
Let $A$ be a quasi-quantale and $x,y,z\in{A}$. Then, the following conditions hold.
\begin{enumerate}
\item If $x\leq y,$ then $zx\leq zy$ and $xz\leq yz.$
\item Moreover, if $1a\leq{a}$ for all $a\in{A}$, then
\begin{enumerate}[(a)]
	\item $xy\leq{y\wedge{x1}}$ 
	\item $x0=0$
\end{enumerate}
\end{enumerate}
\end{prop}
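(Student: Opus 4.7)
The plan is to reduce everything to the directed distributive laws (RDQ)/(LDQ) of Definition \ref{a}. For (1), observe that if $x\leq y$ then the two-element set $\{x,y\}$ is directed (any pair from $\{x,y\}$ has $y$ as an upper bound in the set), and its join is $y$. Applying (LDQ) with this directed set yields
\[zy=z\bigl(\textstyle\bigvee\{x,y\}\bigr)=\bigvee\{zx,zy\}=zx\vee zy,\]
so $zx\leq zy$. Replacing (LDQ) by (RDQ) in the same argument gives $xz\leq yz$. This is the only place where the distributive hypothesis enters, and the main conceptual point is that because we are in a quasi-quantale and not a quantale, we cannot distribute over arbitrary binary joins directly; we must first check that $\{x,y\}$ is directed, which is exactly the hypothesis $x\leq y$.

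For (2)(a), I would use the top element $1$ of $A$ together with the monotonicity just established. Since $x\leq 1$, part (1) gives $xy\leq 1\cdot y$, and then the standing hypothesis $1a\leq a$ (applied with $a=y$) yields $xy\leq y$. Symmetrically, $y\leq 1$ gives $xy\leq x\cdot 1$ via (1). Taking the meet of these two upper bounds produces the desired inequality $xy\leq y\wedge x1$.

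For (2)(b), I specialize (a) to $y=0$, obtaining $x0\leq 0\wedge x1=0$. Since $0$ is the bottom of $A$, the reverse inequality $0\leq x0$ is automatic, so $x0=0$.

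I do not anticipate a real obstacle: the proof is a direct unfolding of the axioms. The only point that requires a little care is the verification that $\{x,y\}$ is directed when $x\leq y$, which is precisely what allows the quasi-quantale distributive laws to replace the full quantale distributive laws in establishing the monotonicity of the product.
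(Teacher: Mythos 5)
Your proposal is correct and follows essentially the same route as the paper: establish monotonicity of the product from the distributivity laws applied to the directed two-element set $\{x,y\}$ with join $y$, then obtain $xy\leq y\wedge x1$ from $x\leq 1$, $y\leq 1$ and the hypothesis $1a\leq a$, and finally specialize to $y=0$ for $x0=0$. The only cosmetic difference is that in (2)(a) you invoke part (1) directly, whereas the paper re-unfolds the directedness of $\{y,1\}$ and $\{x,1\}$; the underlying argument is identical.
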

\begin{proof}
(1) Notice that $\{x,y\}\subseteq{A}$ is a directed subset, so $zy=z(x\vee y)=zx\vee zy.$
Thus $zx\leq{zy}$. Analogously, $yz=(x\vee y)z=xz\vee yz.$
So $xz\leq yz.$

\noindent (2) (a) We have that $\{y,1\}$ is directed, so $x1=x(y\vee{1})=xy\vee{x1}.$  Then $xy\leq{x1}$. On the other hand, $y\geq1y=(x\vee 1)y=xy \vee 1y=xy\vee y,$ then $xy\leq y$. Thus $xy\leq x1\wedge y.$

\noindent (2)(b)  By $(\textit{1}),$ it follows that $x0\leq{x1\wedge{0}}=0.$
\end{proof}

\begin{prop}\label{0032}
Let $A$ be a quasi-quantale.
The following conditions hold.
\begin{enumerate}
\item If $x\leq y$ and $z\leq v,$ then $xz\leq yv.$
\item $xy\vee xz\leq x(y\vee z)$ and   $yx\vee zx\leq (y\vee z)x,\,$ for all $x,y,z\in A.$
\item If $1a\leq{a}$ for all $a\in{A},$ then
\begin{enumerate}[(a)]
\item $xx:=x^2\leq x1$ and $x^2\leq x,$ for every $x\in A.$
\item  $(x\wedge y)^2\leq xy$ and $(x1\wedge y)^2\leq xy.$
\end{enumerate}
\end{enumerate}
\end{prop}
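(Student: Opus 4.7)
The plan is to use Proposition~\ref{003} as the engine: each of the four items reduces to monotonicity of the product together with the directed distributivity laws (RDQ), (LDQ), and for part~(3) one extra application of associativity. Thus for (1), I will chain the two one-sided monotonicity assertions of Proposition~\ref{003}(1): from $x\leq y$ I obtain $xz\leq yz$, from $z\leq v$ I get $yz\leq yv$, and transitivity yields $xz\leq yv$.

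For (2), since $y\leq y\vee z$ and $z\leq y\vee z$, Proposition~\ref{003}(1) gives $xy\leq x(y\vee z)$ and $xz\leq x(y\vee z)$; joining the two delivers the first inequality, and the symmetric argument on the right produces $yx\vee zx\leq (y\vee z)x$. Notice that equality need not hold because (LDQ) and (RDQ) only distribute over \emph{directed} joins, and a two-element set $\{y,z\}$ is generally not directed; this is in fact what distinguishes a quasi-quantale from a quantale.

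Part~(3) is where the hypothesis $1a\leq a$ enters. For (a), the inequality $x^{2}\leq x1$ is part~(1) applied to $x\leq 1$, while $x^{2}\leq x$ is obtained by specialising $y=x$ in $xy\leq y\wedge x1$ from Proposition~\ref{003}(2)(a). For (b), the bound $(x\wedge y)^{2}\leq xy$ is immediate from part~(1) since $x\wedge y\leq x$ and $x\wedge y\leq y$. The only step that requires a small additional manoeuvre is $(x1\wedge y)^{2}\leq xy$: part~(1) gives $(x1\wedge y)^{2}\leq (x1)y$, associativity rewrites the right-hand side as $x(1y)$, and then $1y\leq y$ combined with monotonicity yields $x(1y)\leq xy$. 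This final step, and only this step, uses the standing hypothesis of part~(3), so the main obstacle is simply recognising the right moment to invoke associativity; the rest is bookkeeping on monotonicity.
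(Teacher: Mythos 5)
Your proposal is correct and follows essentially the same route as the paper's own proof: chaining the monotonicity of Proposition~\ref{003}(1) for parts (1) and (2), invoking $xy\leq y\wedge x1$ for (3)(a), and for (3)(b) combining part (1) with associativity and $1y\leq y$ to get $(x1\wedge y)^2\leq (x1)y=x(1y)\leq xy$. The only divergence is cosmetic: the paper's proof cites Proposition~\ref{003}(2)(a) for item (1) and ``(2)'' for the first half of (3)(b), which are evidently slips for the monotonicity statements you actually use, so your references are the intended ones.
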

\begin{proof}
\noindent  (1) It follows from Proposition \ref{003} (2) (a).

\noindent  (2) Since $y\leq y\vee z,$ then  $xy\leq x(x\vee z).$ Similarly, $z\leq y\vee z$ implies that $xz\leq x(y\vee z).$ Therefore, $xy\vee xz\leq x(y\vee z).$

\noindent (3) (a)  It follows from Proposition \ref{003} (2) (a).

\noindent (3) (b) Since $x\wedge y \leq x$ and  $x\wedge y \leq y,$ by (2), we conclude that $(x\wedge y)^2\leq xy.$ On the other hand, since $x1\wedge y\leq x1$ and $x1\wedge y\leq y.$ By hypothesis,  $(1)$ and the associativity  of the product, we conclude that $(x1\wedge y)^2\leq (x1)y=x(1y)\leq xy.$
\end{proof}

\begin{prop}\label{0033}
Let $A$ be quasi-quantale. 
Consider the following statements. 
\begin{enumerate}
\item $xy=0$ if and only if $x1\wedge y=0,$ for every $x,y\in A.$
\item If $x^2=0,$ then $x=0,$  for every $x	\in A.$
\end{enumerate}

Then, the condition (1) always implies (2). If in addition, $A$ is a quasi-quantale which satisfies that $1a\leq{a}$ for all $a\in{A},$ the two conditions are equivalent.

\end{prop}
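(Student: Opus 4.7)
The plan is to handle the two implications separately. The easier half is $(2)\Rightarrow(1)$, which reduces to a short back-and-forth exchange of the inequalities already catalogued in Propositions~\ref{003} and~\ref{0032}; the more delicate half is $(1)\Rightarrow(2)$.

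For $(1)\Rightarrow(2)$, I start by specialising (1) at the diagonal pair $(x,x)$: from $x^{2}=0$ we deduce $x1\wedge x=0$. To conclude $x=0$ it suffices to have $x\leq x1$, since this forces $x=x\wedge x1=0$. Securing $x\leq x1$ is the genuinely subtle point; the natural route is a second application of (1): associativity gives $x(x1)=(xx)\cdot 1=0$, so (1) applied to the pair $(x,x1)$ yields $x1\wedge x1=x1=0$, which together with the ambient monotonicity from Proposition~\ref{003} should then close the argument.

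For $(2)\Rightarrow(1)$ under the additional hypothesis $1a\leq a$, I would split the biconditional. Forward direction: assume $xy=0$; Proposition~\ref{0032}(3)(b), whose proof uses precisely $1a\leq a$, gives $(x1\wedge y)^{2}\leq xy=0$, hence $(x1\wedge y)^{2}=0$, and (2) then forces $x1\wedge y=0$. Backward direction: assume $x1\wedge y=0$; Proposition~\ref{003}(2)(a), again relying on $1a\leq a$, gives $xy\leq y\wedge x1=0$, so $xy=0$.

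The main obstacle is the closing step of $(1)\Rightarrow(2)$: extracting $x=0$ from the inequality $x1\wedge x=0$ without the crutch of the hypothesis $1a\leq a$. Everything else is routine manipulation of the preparatory inequalities of Propositions~\ref{003} and~\ref{0032}, which is also why the extra hypothesis only appears in the full equivalence direction.
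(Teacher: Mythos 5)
Your second half, the equivalence's direction $(2)\Rightarrow(1)$ under $1a\leq a$, is correct and is in fact exactly the paper's argument: $(x1\wedge y)^{2}\leq (x1)y=x(1y)\leq xy$ via Proposition \ref{0032}(3)(b) for the forward implication, and $xy\leq y\wedge x1$ via Proposition \ref{003}(2)(a) for the converse. The genuine gap is precisely where you suspect it: the direction $(1)\Rightarrow(2)$ is never established. Specialising (1) at $(x,x)$ gives only $x1\wedge x=0$, and your attempted repair fails twice over. First, the step $x(x1)=(xx)1=0$ presupposes $0\cdot 1=0$, which is not available in a bare quasi-quantale: the empty family is not directed, so (RDQ)/(LDQ) say nothing about $0$, and Proposition \ref{003}(2)(b) yields only $x0=0$, and only under the extra hypothesis $1a\leq a$. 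Second, even granting $x1=0$, there is no way to pass from $x1=0$ together with $x1\wedge x=0$ to $x=0$; what is really needed is the comparison $x\leq x1$, which is a right-sided unitality condition and is not implied by $1a\leq a$ either, so the ``crutch'' you mention would not save the step.

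Indeed the step cannot be completed from the stated hypotheses: on the two-element chain $\{0,1\}$ with the identically zero product one obtains a quasi-quantale (even satisfying $1a=0\leq a$) in which both sides of (1) hold for all $x,y$, yet $1^{2}=0$ with $1\neq 0$, so (2) fails. For comparison, the paper's own proof of this direction is a single line asserting that (1) gives $x=x\wedge x=0$; that is, it silently replaces $x1\wedge x$ by $x\wedge x$, which amounts to assuming $x\leq x1$ (or to reading (1) with $x\wedge y$ in place of $x1\wedge y$). So your instinct that this closing step is the delicate point is sound, but as it stands your proposal proves only the second claim of the proposition, not the first.
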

\begin{proof}
First, we prove that (1) implies (2).  Let $x\in A$ be such that $x^2=0.$ Then, by $(1)$ it follows that $x=x\wedge x=0.$

Now, suppose that $1a\leq{a}$ for all $a\in{A}.$ It remains to prove that (2) implies (1). Let $x,y\in A$ be such that $xy=0.$ Then, by Proposition \ref{0032} (3) (b), we have that $(x1\wedge y)^2\leq xy=0,$ so $(x1\wedge y)^2=0.$ Hence, $x1\wedge y=0.$
Conversely, suppose that $x1\wedge y=0.$ Then, it is immediately that $xy\leq x1\wedge y=0.$
\end{proof}

Recall that a lattice $A$ is {\em pseudomultiplicative} if it has a binary product
$(x,y)\mapsto xy$ which satisfies the following conditions for $x,y,z\in A$: \begin{enumerate}\item If $x\leq y$ then $xz\leq yz$ and $zx\leq zy$. \item $xy\leq x\wedge y$. \item $x(y\vee z)\leq xy\vee xz$.\end{enumerate} See \cite[Section 4]{FT} for more details.

From Proposition \ref{003}, it follows that a left-unital quasi-quantale (with $e=1$) generalizes the concept of a pseudomultiplicative lattice. Even though let $A$ be a bilateral quasi-quantale with $e=1,$ this one just will satisfy the above conditions (1) and (2). Indeed, a quasi-quantale, in general, does not hold the latter condition (3), see Proposition \ref{0032} (3) (b).


\begin{lemma}\label{03}
Let $A$ be a quasi-quantale which satisfies the following identities:
\[a(b\vee{c})=ab\vee{ac}\]
\[(b\vee{c})a=ba\vee{ca}\]
for all $a,b,c\in{A}$. Then $A$ is a quantale.
\end{lemma}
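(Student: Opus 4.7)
The plan is to leverage two facts: binary distributivity of the product over joins extends to finite distributivity by induction, and arbitrary joins in a complete lattice can always be rewritten as directed joins of finite subjoins. The quasi-quantale hypothesis already gives distributivity over directed joins, so combining these two facts yields full distributivity and hence a quantale.

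First I would establish \emph{finite distributivity} by induction on $|F|$: for every finite non-empty $F \subseteq A$,
\[a\left(\bigvee F\right)=\bigvee\{ax\mid x\in F\} \qquad\text{and}\qquad \left(\bigvee F\right)a=\bigvee\{xa\mid x\in F\}.\]
The base case $|F|=1$ is trivial. For the inductive step, write $\bigvee F = (\bigvee F')\vee x_0$ for some $x_0\in F$ and $F'=F\setminus\{x_0\}$; then apply the hypothesized binary distributive law and the inductive hypothesis.

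Next, given an arbitrary non-empty subset $X\subseteq A$, I would introduce the family
\[D_X=\left\{\bigvee F \;\middle|\; F\subseteq X,\ F\text{ finite and non-empty}\right\}.\]
This family is directed: given $\bigvee F_1,\bigvee F_2\in D_X$, the element $\bigvee(F_1\cup F_2)\in D_X$ dominates both. Moreover $\bigvee D_X = \bigvee X$, since singletons $\{x\}$ show every $x\in X$ lies below some element of $D_X$, and conversely each $\bigvee F\le\bigvee X$. Applying (LDQ) of the quasi-quantale to the directed set $D_X$, then using finite distributivity inside, I would compute
\[a\left(\bigvee X\right)=a\left(\bigvee D_X\right)=\bigvee_{F}a\left(\bigvee F\right)=\bigvee_{F}\bigvee_{x\in F}ax=\bigvee\{ax\mid x\in X\},\]
with the symmetric argument via (RDQ) and the right-hand binary distributive law handling $(\bigvee X)a=\bigvee\{xa\mid x\in X\}$.

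The conceptual content is routine; the main obstacle is bookkeeping for the empty subset, since to be a quantale in the strongest sense one needs $a\cdot 0=0=0\cdot a$, corresponding to $X=\emptyset$. This does not follow formally from the binary identity $a(b\vee c)=ab\vee ac$ alone (one only gets $a0\le ac$ for every $c$ by instantiating $b=0$). In practice this either falls under the convention that joins are taken over non-empty sets in the quasi-quantale framework, or is absorbed into the proof by observing that $\bigvee D_X$ manifestly accounts for all non-empty $X$ and treating the empty case as a boundary convention inherited from the underlying $\bigvee$-semilattice.
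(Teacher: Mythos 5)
Your proposal is correct and takes essentially the same route as the paper: the paper likewise forms the directed family $Y=\{x_1\vee\dots\vee x_n\mid x_i\in X\}$ of finite joins of elements of $X$, applies the quasi-quantale law to this directed set, and uses the (inductively extended) binary distributive law to identify $\bigvee\{ay\mid y\in Y\}$ with $\bigvee\{ax\mid x\in X\}$. Your explicit handling of the empty-set boundary case is an extra care the paper silently omits, but it does not alter the argument.
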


\begin{proof}
Let $X\subseteq{A}$. Define $Y=\{x_1\vee...\vee{x_n}\mid x_i\in{X}\}$. Then $X\subseteq{Y}$, so $\bigvee{X}\leq\bigvee{Y}$ and $Y$ is a direct subset of $A$. Since $A$ is quasi-quantale
\[a\left(\bigvee{Y}\right)=\bigvee\{ay\mid y\in{Y}\}.\]
Every $y\in{Y}$ is of the form $y=x_1\vee...\vee{x_n}$ with $x_i\in{X}$. Since $x_1\vee...\vee{x_n}\leq\bigvee{X}$ then 
\[\bigvee\{ay\mid y\in{Y}\}\leq{a\left(\bigvee{X}\right)}\leq{a\left(\bigvee{Y}\right),}\]
so
\[a\left(\bigvee{X}\right)=\bigvee\{ay\mid y\in{Y}\}.\]
We have that $X\subseteq{Y}$, whence $\bigvee\{ax\mid x\in{X}\}\leq\bigvee\{ay\mid y\in{Y}\}$. On the other hand, by hypothesis
\[ay=a(x_1\vee...\vee{x_n})=ax_1\vee...\vee{ax_n}\leq\bigvee\{ax\mid x\in{X}\},\]
thus $\bigvee\{ay\mid y\in{Y}\}\leq\bigvee\{ax\mid x\in{X}\}$. Hence
\[a\left(\bigvee{X}\right)=\bigvee\{ax\mid x\in{X}\}\]

\end{proof}

\begin{lemma}\label{04}
Let $(A,\leq,\vee,1)$ be a complete lattice. Then, $A$ is meet-continuous if and only if $A$ is a quasi-quantale such that
\begin{enumerate}[(a)]
	\item The binary operation in $A$ is commutative.
	\item A has an identity , $e=1$
\end{enumerate}
\end{lemma}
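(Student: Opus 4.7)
The plan is to treat the two implications separately.

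For the forward direction, assume $A$ is meet-continuous. I take the product on $A$ to be $\wedge$ itself. Associativity and commutativity of $\wedge$ are immediate from lattice theory, and $1 \wedge a = a$ shows that $1$ serves as the identity. The directed distributive law (LDQ), $a \wedge \bigvee X = \bigvee\{a \wedge x : x \in X\}$ for every directed $X \subseteq A$, is exactly the meet-continuity hypothesis (IDL); (RDQ) then follows from commutativity of $\wedge$. Hence $(A,\wedge)$ is a commutative bilateral-unital quasi-quantale with $e=1$.

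For the backward direction, assume $A$ is a quasi-quantale with commutative product $\cdot$ and identity $e = 1$. By Proposition \ref{003}(2)(a), using $x \cdot 1 = x$, we have $xy \leq y \wedge x$, so $\cdot \leq \wedge$ pointwise. The key observation is that when $\bigvee X = 1$, (LDQ) gives
\[ a = a \cdot 1 = a \cdot \bigvee X = \bigvee\{a \cdot x : x \in X\} \leq \bigvee\{a \wedge x : x \in X\} \leq a \wedge 1 = a, \]
forcing $\bigvee\{a \wedge x\} = a$, i.e., meet-continuity in the case $\bigvee X = 1$. For a general directed $X$ with $\bigvee X = b$, I would promote this to full meet-continuity by identifying $\cdot$ with $\wedge$: the reverse inequality $x \wedge y \leq xy$ should come from combining $(x \wedge y)^2 \leq xy$ (Proposition \ref{0032}(3)(b)) with the identity $e = 1$ and commutativity, applying (LDQ) to directed families whose join is $1$ and that pass through $x \wedge y$. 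Once $\cdot = \wedge$ is established, (LDQ) for $\cdot$ is precisely the meet-continuity identity (IDL).

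The main obstacle is this general-$b$ step, equivalently the identification of $\cdot$ with $\wedge$. The identity $e = 1$ coinciding with the top of the lattice is essential on two counts: it powers the squeeze argument above at $\bigvee X = 1$, and it is what allows Propositions \ref{003} and \ref{0032} to be applied with $x \cdot 1 = x$. Commutativity is needed to align (LDQ) and (RDQ) so that the two distributive laws reinforce each other rather than operating independently; together with the auxiliary power inequalities, this should be enough to close the gap between $xy \leq x \wedge y$ and equality.
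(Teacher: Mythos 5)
Your forward direction is exactly the paper's: take the product to be $\wedge$, and (IDL) becomes (LDQ)/(RDQ); nothing to add there. The converse is where the trouble is, and you have correctly located it, but the step you call the ``main obstacle'' -- identifying $\cdot$ with $\wedge$ -- is not a missing trick: it is unprovable from the stated hypotheses. The inequality $x\wedge y\le xy$ you hope to extract from $(x\wedge y)^2\le xy$ (Proposition \ref{0032}(3)(b)) would require $x\wedge y\le(x\wedge y)^2$, i.e.\ idempotency of the product, which is not assumed; and no manipulation of (LDQ) along directed families with join $1$ can supply it, because there are commutative quasi-quantales with identity $e=1$ in which the product is not the meet: the ideals of $\mathbb{Z}$ under the ideal product form a commutative quantale whose unit $\mathbb{Z}$ is the top, yet $(2\mathbb{Z})(2\mathbb{Z})=4\mathbb{Z}\neq 2\mathbb{Z}$. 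So any argument whose endpoint is $\cdot=\wedge$ cannot close. Your squeeze at $\bigvee X=1$ is correct, but it really is only the ``at the top'' case.

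You should also know that the paper's own proof of the converse makes precisely the leap you balked at: it derives $ab\le a\wedge b$ from $1a=a=a1$ and then asserts ``thus $ab=a\wedge b$'' with no justification, so you have essentially reproduced the paper's argument together with its defect -- and flagged it, which is to your credit. The defect is not cosmetic: without idempotency the converse is false. Take the complete lattice consisting of a chain $x_1<x_2<\cdots$ with $\bigvee_n x_n=b$, an element $a<b$ incomparable to every $x_n$ (so $a\wedge x_n=0$, $a\vee x_n=b$), and a top $1>b$; define the product by $uv=u\wedge v$ on the chain $\{0,x_n,b\}$, let $1$ act as identity, and set $az=0$ for every $z\neq 1$. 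This product is commutative, associative, has unit $e=1$, and distributes over directed joins (the only directed sets without a greatest element are cofinal subsets of the chain, and one checks $cb=\bigvee_n cx_n$ for every $c$; directed sets with a greatest element are handled by monotonicity), yet $a\wedge\bigvee_n x_n=a\neq 0=\bigvee_n(a\wedge x_n)$, so the lattice is not meet-continuous. Statement and proof are both repaired by adding the hypothesis that every element is idempotent, exactly as in Proposition \ref{p4}: then $x\wedge y=(x\wedge y)^2\le xy\le x\wedge y$, the product is $\wedge$, and (LDQ) for directed sets is literally (IDL).
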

\begin{proof} First, suppose that $A$ is a meet-continuous complete lattice. It is clear that $A$ with the binary operation $\wedge$ is a quasi-quantale which satisfies (a) and (b).

Conversely, since $A$ is commutative, it follows that $1a=a=a1$. This implies that $ab\leq{a\wedge{b}}$. Thus $ab=a\wedge{b}.$ Therefore, $A$ is meet-continuous. 
\end{proof}

\begin{dfn}\label{b}
Let $A$ be a quasi-quantale. Consider $s\colon A\rightarrow{A}$ a inflator on $A.$ We say that:
\begin{enumerate}
	\item $s$ is \emph{contextual stable} if $s(a)x\leq{s(ax)}$ and $y(s(a))\leq{s(ya)}$ for all $a,x,y\in{A}$.
	\item $s$ is \emph{pre-multiplicative} if $s(a)\wedge{b}\leq{s(ab)}$ for all $a,b\in{A}$.
	\item $s$ is \emph{multiplicative} if $s(a)\wedge{s(b)}={s(ab)}$ for all $a,b\in{A}$.
	\item $s$ is a \emph{contextual pre-nucleus} if $s(a)s(b)\leq{s(ab)}$ for all $a,b\in{A}$. 
	\item $s$ is a \emph{contextual nucleus} if $s^2=s$ and $s$ is a contextual pre-nucleus.
\end{enumerate}
\end{dfn}

Let $A$ be a quasi-quantale and $j\colon A\rightarrow{A}$ a nucleus on $A$, we define a binary operation in $A_j$ given by $a,b\in{A_j}$, $a\cdot{b}=j(ab)$.

\begin{prop}\label{05}
Let $A$ be a quasi-quantale and $j$ be a contextual nucleus. Then $(A_j,\cdot)$ is a quasi-quantale. If $A$ is a left-unital quasi-quantale with identity  $e$ then $A_j$ is a left-unital quasi-quantale with identity  $j(e)$.
\end{prop}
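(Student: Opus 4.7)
The plan is to verify the three ingredients of a quasi-quantale structure on $A_j$: that $A_j$ is a complete $\bigvee$-semilattice (a standard fact about nuclei, with $\bigsqcup X = j(\bigvee X)$ in $A$, where $\bigsqcup$ denotes joins in $A_j$ and $\bigvee$ those in $A$), that $\cdot$ is associative, and that $\cdot$ distributes over directed joins on both sides. Before tackling these I would isolate a single technical lemma that makes every other step routine, namely
\[
j(a\,j(b)) \;=\; j(ab) \;=\; j(j(a)\,b) \qquad \text{for all } a,b\in A.
\]
The proof of this lemma is the main place the contextual pre-nucleus condition enters: the upper bound uses $j(a)j(b)\leq j(ab)$ combined with $a\leq j(a)$, $b\leq j(b)$ and monotonicity of $\cdot$ from Proposition \ref{003}; the lower bound is immediate from $ab\leq a\,j(b)$ and monotonicity of $j$.

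Granted this lemma, associativity is a short chain: for $a,b,c\in A_j$,
\[
(a\cdot b)\cdot c \;=\; j\bigl(j(ab)\,c\bigr) \;=\; j\bigl((ab)c\bigr) \;=\; j\bigl(a(bc)\bigr) \;=\; j\bigl(a\,j(bc)\bigr) \;=\; a\cdot(b\cdot c),
\]
using associativity of the original product in the middle.

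For directed distributivity, take $X\subseteq A_j$ directed (it is directed in $A$ as well, since the order is inherited) and $a\in A_j$. Using the lemma and then (LDQ) for $A$:
\[
a\cdot\bigsqcup X \;=\; j\bigl(a\,j(\textstyle\bigvee X)\bigr) \;=\; j\bigl(a\textstyle\bigvee X\bigr) \;=\; j\bigl(\textstyle\bigvee_{x\in X} ax\bigr).
\]
On the other hand $\bigsqcup_{x\in X}(a\cdot x) = j\bigl(\bigvee_{x\in X} j(ax)\bigr)$, and the two sides agree by squeezing: $ax\leq j(ax)$ gives one inequality after applying $j$, and $j(ax)\leq j(\bigvee_{x\in X} ax)$ (from $ax\leq\bigvee_{x\in X}ax$ and monotonicity of $j$) together with idempotency of $j$ gives the other. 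The right-hand directed distributivity is identical, using (RDQ) and the symmetric form of the lemma.

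For the unital claim, suppose $ea=a$ for all $a\in A$. Then for $a\in A_j$,
\[
j(e)\cdot a \;=\; j\bigl(j(e)\,a\bigr) \;=\; j(ea) \;=\; j(a) \;=\; a,
\]
so $j(e)$ is a left identity in $A_j$. I expect the chief obstacle to be keeping track of which joins live in $A$ and which in $A_j$, and recognising that the contextual pre-nucleus inequality $j(a)j(b)\leq j(ab)$ is exactly what is needed to push $j$ inside products; once the lemma $j(a\,j(b))=j(ab)=j(j(a)b)$ is in hand, everything else is a direct verification.
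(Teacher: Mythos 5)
Your proposal is correct and takes essentially the same route as the paper: the identity $j(a\,j(b))=j(ab)=j(j(a)\,b)$ you isolate is exactly what the paper's displayed inequalities establish inline (via the contextual pre-nucleus inequality $j(a)j(b)\leq j(ab)$, monotonicity of the product from Proposition \ref{003}, and idempotency of $j$), and your associativity chain, the squeeze argument for directed joins in $A_j$ (with $\bigvee^j X=j(\bigvee X)$), and the unit computation for $j(e)$ all match the paper's verification. Stating the lemma up front for all $a,b\in A$ is a tidier packaging, but it is not a different argument.
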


\begin{proof}
Let $a,b,c\in{A_j}$. Consider the following inequalities,
\begin{equation}
abc\leq{j(ab)c}\leq{j(j(ab)c)}=j(ab)\cdot{c}=(a\cdot{b})\cdot{c}\end{equation}
\begin{equation}
j(ab)c=j(ab)j(c)\leq{j(abc)}
\end{equation}
Using (2), we have
\begin{equation}
(a\cdot{b})\cdot{c}=j(j(ab)c)\leq{j^2(abc)}=j(abc).
\end{equation}

By (1), $j(abc)\leq(a\cdot{b})\cdot{c},$ and by $(3)$, we have the equality
\[(a\cdot{b})\cdot{c}\leq{j(abc)}\leq(a\cdot{b})\cdot{c}\]

Analogously, $a\cdot(b\cdot{c})=j(abc)$. Then $a\cdot(b\cdot{c})=(a\cdot{b})\cdot{c}.\\$

Now, let $X\subseteq{A_j}$ be a directed subset and $a\in{A_j}$. Then, 
\[a\cdot\left(\bigvee^jX\right)=j\left(a\bigvee^jX\right)=j\left(aj\left(\bigvee{X}\right)\right)\geq{a\left(\bigvee{X}\right)}=\bigvee\{ax|x\in{X}\}.\]

Notice that $j(ax)\leq{j(\bigvee\{ax\mid x\in{X}\})}$ for all $x\in{X}$, so
\[j\left(a\cdot\left(\bigvee^jX\right)\right)\geq{j\left(\bigvee\{ax|x\in{X}\}\right)}\geq{\bigvee\{j(ax)|x\in{X}\}}.\]
Since $j$ is a nucleus,
\[a\cdot\left(\bigvee^jX\right)=j\left(a\cdot\left(\bigvee^jX\right)\right)\geq{j\left(\bigvee \{ j(ax)\mid x\in{X} \} \right)}=\bigvee^j\{a\cdot{x}\mid x\in{X}\}.\]
On the other hand,
\begin{displaymath}
\bigvee^j\{a\cdot{x}\mid x\in{X}\}=j\left(\bigvee\{j(ax)\mid x\in{X}\}\right)\geq{j\left(\bigvee\{ax\mid x\in{X}\}\right)}
\end{displaymath}
\begin{displaymath}
=j\left(a\bigvee{X}\right)
\geq{j(a)j\left(\bigvee{X}\right)}=a\cdot\bigvee^j{X}
\end{displaymath}
Thus
\[a\cdot\bigvee^j{X}\geq\bigvee^j\{ax|x\in{X}\}\geq{a\cdot\bigvee^j{X}}\]
Hence $(A_j,\cdot)$ is a quasi-quantale.

Now, suppose that $A$ is a left-unital quasi-quantale. Let $a\in{A_j}$, then 
\[a=ea\leq{j(e)a}\leq{j(j(e)a)}=j(j(e)j(a))\leq{j(ea)}=j(a)=a.\]
So $a=j(j(e)a)=j(e)\cdot{a}.$
Hence $A_j$ is a left-unital quasi-quantale.
\end{proof}

\begin{prop}\label{6}
Let $A$ be a quasi-quantale. Then, for each multiplicative nucleus $d$ on $A,$ the set $A_d$ is a meet-continuous lattice.
\end{prop}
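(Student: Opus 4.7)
The plan is to realize $A_d$ as the quotient lattice whose meets are inherited from $A$ and whose joins are $\bigvee^d Y := d(\bigvee Y)$, and then transfer a directed supremum across a meet by converting $\wedge$ into the product via the multiplicativity identity $d(xy) = d(x)\wedge d(y)$ and applying the directed distributivity (LDQ).

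First I would verify that $A_d$ is closed under arbitrary meets taken in $A$. Given $\{a_i\}\subseteq A_d$, one has $\bigwedge a_i \leq d(\bigwedge a_i) \leq \bigwedge d(a_i) = \bigwedge a_i$, using only that $d$ is an inflator, monotone, and that each $a_i$ is fixed. Hence meets in $A_d$ coincide with meets in $A$, while for any $Y\subseteq A_d$ the join in $A_d$ is $\bigvee^d Y = d(\bigvee Y)$.

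Now fix $a\in A_d$ and a directed $X\subseteq A_d$, and set $t := a\wedge \bigvee^d X$ and $s := \bigvee\{a\wedge x\mid x\in X\}$, both computed in $A$. Since $d(\bigvee X)\in A_d$ and $a\in A_d$, the first paragraph gives $t\in A_d$. Using idempotence $a=d(a)$ and the multiplicativity of $d$:
\[
t \;=\; d(a)\wedge d\!\left(\bigvee X\right) \;=\; d\!\left(a\cdot\bigvee X\right).
\]
Because $X$ is directed, (LDQ) yields $a\cdot\bigvee X = \bigvee\{ax\mid x\in X\}$, so
\[
t \;=\; d\!\left(\bigvee\{ax\mid x\in X\}\right). \qquad (\ast)
\]
Separately, for each $x\in X$, multiplicativity on $a,x\in A_d$ gives $d(ax)=d(a)\wedge d(x)=a\wedge x$, whence in particular $ax\leq a\wedge x$.

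Finally I would establish the two inequalities of meet-continuity. From $(\ast)$ and $ax\leq a\wedge x$ one gets $\bigvee\{ax\}\leq s$, and applying $d$ yields $t\leq d(s)=\bigvee^d\{a\wedge x\mid x\in X\}$. Conversely, for each $x\in X$, $a\wedge x\leq a$ and $a\wedge x\leq x\leq d(\bigvee X)=\bigvee^d X$, so $a\wedge x\leq t$; taking the supremum in $A$ and applying $d$ (using $t=d(t)$) gives $\bigvee^d\{a\wedge x\}=d(s)\leq d(t)=t$. Hence $t=\bigvee^d\{a\wedge x\mid x\in X\}$, which is exactly meet-continuity in $A_d$. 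The main technical hurdle is the identity $(\ast)$: multiplicativity together with idempotence is what allows a $\wedge\,d(\cdot)$ to be absorbed into a $d(\cdot\,\cdot)$, and only after this translation can the quasi-quantale law (LDQ) be invoked to pull the directed supremum out of $d$.
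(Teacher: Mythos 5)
Your proof is correct and follows essentially the same route as the paper's: both convert the meet of fixed points into $d$ of a product via multiplicativity and idempotence, pull the directed supremum out using the quasi-quantale distributivity (you use (LDQ), the paper (RDQ)), use $ax\leq d(ax)=a\wedge x$, and close with a two-sided comparison. Your explicit checks that $A_d$ is closed under meets and that $t\in A_d$ only make the same argument slightly more careful.
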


\begin{proof}
Let $X\subseteq{A}_d$ be a directed subset and $a\in{A}_d$. Then,
\[d\left(\left(\bigvee{X}\right)\wedge{a}\right)=d\left(\left(\bigvee{X}\right)a\right)=d\left(\bigvee\{xa|x\in{X}\}\right)\leq{d\left(\bigvee\{d(xa)|x\in{X}\}\right)}\]
\[=d\left(\bigvee\{d(x)\wedge{d(a)}|x\in{X}\}\right)=d\left(\bigvee\{x\wedge{a}|x\in{X}\}\right)\leq{d\left(\left(\bigvee{X}\right)\wedge{a}\right)}.\]
Therefore, $d((\bigvee{X})\wedge{a})=d\left(\bigvee\{x\wedge{a}|x\in{X}\}\right)$. Thus
\[\left(\bigvee^d{X}\right)\wedge{a}=d\left(\bigvee{X}\right)\wedge{d(a)}=d\left(\left(\bigvee{X}\right)\wedge{a}\right)\]
\[=d\left(\bigvee\{x\wedge{a}|x\in{X}\}\right)=\bigvee^d{\{x\wedge{a}|x\in{X}\}}.\]
\end{proof}

\begin{cor}\label{05a}
Let $A$ be a quasi-quantale. Suppose that for any $X\subseteq{A}$ and $a\in{A}$ is satisfied 
\[\left(\bigvee{X}\right)a=\bigvee\{xa\mid x\in{X}\}.\]
Then, for each multiplicative nucleus $d$ on $A,$ the set $A_d$ is a frame.
\end{cor}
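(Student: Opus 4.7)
The plan is to observe that Corollary \ref{05a} is essentially Proposition \ref{6} with the word ``directed'' erased, and the extra hypothesis is precisely what lets us perform this erasure. So I would simply run the computation from the proof of Proposition \ref{6} for an arbitrary subset $X\subseteq A_d$ (rather than a directed one) and check that every step still goes through. The output is the full frame distributive law (FDL) on $A_d$.

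In detail, fix $a\in A_d$ and $X\subseteq A_d$ arbitrary. First I would show the chain
\[
d\bigl((\textstyle\bigvee X)\wedge a\bigr)=d\bigl((\textstyle\bigvee X)\,a\bigr)=d\bigl(\textstyle\bigvee\{xa\mid x\in X\}\bigr)=d\bigl(\textstyle\bigvee\{x\wedge a\mid x\in X\}\bigr),
\]
reading from left to right as follows. The first equality uses that $d$ is multiplicative (so $d(uv)=d(u)\wedge d(v)$) together with the fact that a nucleus is in particular a pre-nucleus (so $d(u\wedge v)=d(u)\wedge d(v)$), giving $d(u\wedge v)=d(uv)$ for all $u,v\in A$; this is exactly the step from Proposition \ref{6}. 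The second equality is where the strengthened right-distributivity of the corollary enters: in Proposition \ref{6} one only had $(\bigvee X)a=\bigvee\{xa\mid x\in X\}$ for directed $X$, but the hypothesis of the corollary gives it for all $X$. The third equality follows by inserting $d$ inside the supremum (using $xa\leq d(xa)$ and monotonicity of $d$ together with $d^2=d$) and then rewriting $d(xa)=d(x)\wedge d(a)=x\wedge a$ because $x,a\in A_d$; the reverse inequality is automatic since each $x\wedge a\leq(\bigvee X)\wedge a$.

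Applying $d$ to the outer expressions gives $d((\bigvee X)\wedge a)=d(\bigvee\{x\wedge a\mid x\in X\})$. I would then translate this into $A_d$-suprema. Since joins in $A_d$ are computed by $\bigvee^d Y=d(\bigvee Y)$ and meets in $A_d$ coincide with meets in $A$ (because pre-nuclei preserve $\wedge$), we obtain
\[
\bigl(\textstyle\bigvee^d X\bigr)\wedge a = d\bigl(\textstyle\bigvee X\bigr)\wedge d(a)=d\bigl((\textstyle\bigvee X)\wedge a\bigr)=d\bigl(\textstyle\bigvee\{x\wedge a\mid x\in X\}\bigr)=\textstyle\bigvee^d\{x\wedge a\mid x\in X\},
\]
which is (FDL) on $A_d$. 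By Proposition \ref{p1} this is equivalent to $A_d$ being a frame.

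There is no real obstacle: the only place where Proposition \ref{6} genuinely needed directedness was the invocation of (LDQ)/(RDQ), and the corollary's hypothesis removes that restriction; everything else (multiplicativity, idempotence, the pre-nucleus identity $d(u\wedge v)=d(u)\wedge d(v)$, monotonicity) works for arbitrary subsets. The only thing I would be careful about is bookkeeping the two different kinds of suprema, $\bigvee$ in $A$ versus $\bigvee^d$ in $A_d$, so that the final identity is stated intrinsically in $A_d$.
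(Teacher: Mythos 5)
Your proposal is correct and is essentially the intended argument: the paper gives no separate proof of Corollary \ref{05a}, treating it as immediate from the proof of Proposition \ref{6}, and your computation is exactly that proof rerun for arbitrary $X\subseteq A_d$ with the strengthened distributivity replacing (RDQ). The only cosmetic remark is that the final appeal to Proposition \ref{p1} is unnecessary, since (FDL) on the complete lattice $A_d$ is already the definition of a frame.
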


\begin{rem}
In Corollary \ref{05a}, if $A$ is an idiom then $A_j$ is an idiom.
\end{rem}

Now, consider $p:A\rightarrow{A}$ an inflator on $A$ such that
\[p(a)p(b)\leq{p(ab)}=p(a\wedge{b})=p(a)\wedge{p(b)}.\]
We call such $p$, \emph{idiomatic pre-nucleus}, and denote by $IP(A)$ the set of idiomatic pre-nucleus.

Let $S\subseteq{IP(A)}$ be a directed subset. Notice that

\[(\bigvee{S})(a)(\bigvee{S})(b)=(\bigvee\{s(a)\mid s\in{S}\})(\bigvee\{s(b)\mid s\in{S}\})\]
\[=\bigvee\{s(a)s'(b)\mid s,s'\in{S}\}.\]
Since $S$ is directed, for $s,s'\in{S}$ there exists $f\in{S}$ such that $s,s'\leq{f}$. Then
\[\bigvee\{s(a)s'(b)\mid s,s'\in{S}\}\leq\bigvee\{f(a)f(b)\mid f\in{S}\}\]
\[\leq\bigvee\{f(ab)\mid f\in{S}\}=\bigvee\{f(a\wedge{b})\mid f\in{S}\}.\]
Hence, $(\bigvee{S})(a)(\bigvee{S})(b)\leq(\bigvee{S})(ab)=(\bigvee{S})(a\wedge{b})$. Since the supremum of a directed set of pre-nucleus is a pre-nucleus, we obtain $\bigvee{S}\in{IP(A)}$.

Denote by $NI(A),$ the set of idiomatic nucleus.

\begin{prop}\label{06}
Let $A$ be a quasi-quantale. Then $NI(A)$ is a frame.
\end{prop}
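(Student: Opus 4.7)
The plan is to parallel the proof of Theorem~\ref{p3} (that $N(A)$ is a frame when $A$ is an idiom), adapting it to the quasi-quantale setting through the three-fold condition defining an idiomatic pre-nucleus.

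First I would check that $NI(A)$ is closed under arbitrary pointwise meets. For $\{j_i\}\subseteq NI(A)$ set $j=\bigwedge_i j_i$ pointwise; the inflator, meet-preservation, and $j(ab)=j(a\wedge b)$ axioms pass through infima componentwise, while the multiplicative inequality $j(a)j(b)\le j(ab)$ follows from $j(a)j(b)\le j_k(a)j_k(b)\le j_k(ab)$ for each $k$. Idempotency comes from the estimate $j^2(a)=\bigwedge_i j_i(\bigwedge_k j_k(a))\le\bigwedge_i j_i(j_i(a))=j(a)$. Together with the constant inflator $\bar d$ (trivially in $NI(A)$), this makes $NI(A)$ a complete lattice whose meets are computed pointwise and whose arbitrary joins exist as meets of upper bounds.

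Next I would pin down the joins concretely. The paragraph preceding the proposition shows that $IP(A)$ is closed under directed pointwise joins; for a directed family of idempotents the pointwise join remains idempotent, so directed joins in $NI(A)$ coincide with pointwise joins. An arbitrary $\bigvee^{NI}S$ is then a directed pointwise join: replace $S$ by the directed set $\widehat S=\{s_1\vee^{NI}\cdots\vee^{NI}s_n\mid s_i\in S,\ n<\omega\}$ of finite $NI(A)$-joins, which has the same supremum.

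For the frame law $j\wedge\bigvee S=\bigvee(j\wedge s)$, the inequality $\ge$ is immediate, and after the reduction above the $\le$ direction amounts to the pointwise identity $j(a)\wedge\bigvee_{s\in S}s(a)=\bigvee_{s\in S}(j(a)\wedge s(a))$ for $S$ directed and $a\in A$. The idea is to use the defining identity $j(x)\wedge j(y)=j(xy)$ of idiomatic pre-nuclei to rewrite binary meets as products, apply the directed distributivity (LDQ)/(RDQ) of the quasi-quantale to $\{s(a)\}_{s\in S}$, and then close up by $j$. Equivalently, since $j$ is in particular a multiplicative nucleus, Proposition~\ref{6} gives that $A_j$ is meet-continuous, so the desired identity can be read off inside that fixed-point lattice.

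The principal obstacle is exactly this last step: the quasi-quantale axioms yield no a priori distributivity of meets over directed joins, only of the product. Overcoming it requires leveraging the fact that an idiomatic pre-nucleus transports the product $j(a)j(b)$ into the meet $j(a)\wedge j(b)$, so that the product-distributivity built into the quasi-quantale structure yields a meet-distributivity inside the $j$-image.
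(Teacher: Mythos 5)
Your proposal breaks down at two places, and the first is fatal. The claim that ``for a directed family of idempotents the pointwise join remains idempotent'' is unjustified, and it is false in general: already on the frame $\mathcal{O}(\mathbb{Q})$ the increasing chain of subspace nuclei $j_n(U)=\mathrm{int}\bigl(\{q_1,\dots,q_n\}\cup U\bigr)$ (delete the first $n$ points of an enumeration of $\mathbb{Q}$) has as pointwise supremum the ``finite defect'' operator $U\mapsto\{x\mid x\text{ has a neighbourhood }V\text{ with }V\setminus U\text{ finite}\}$, which is not idempotent (this is the Cantor--Bendixson phenomenon, and it is precisely why the transfinite $(\_)^{\infty}$ construction is needed in the first place). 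So directed joins in $NI(A)$ need not be pointwise, and your reduction of the frame law to a pointwise identity collapses. The second place is the one you yourself flag as ``the principal obstacle'': even if joins were pointwise, the identity $j(a)\wedge\bigvee_{s\in S}s(a)=\bigvee_{s\in S}\bigl(j(a)\wedge s(a)\bigr)$ is a meet-continuity statement in $A$, which a quasi-quantale does not provide, and the proposed remedies do not apply: the elements $s(a)$ for varying $s\in S$, and the meets $j(a)\wedge s(a)$, are not fixed points of any single idiomatic nucleus, so neither the identity $j(x)\wedge j(y)=j(xy)$ nor Proposition \ref{6} (which only describes the lattice $A_j$) can be brought to bear on them.

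The paper avoids both difficulties by proving the existence of an implication instead of verifying the distributive law directly. For $j\in NI(A)$ and an inflator $k$ it forms $G=\{f\mid f\wedge k\leq j\}$, shows $G$ is closed under composition (hence directed, since $f,g\leq fg$), so the \emph{pointwise} supremum $h=\bigvee G$ is an idiomatic pre-nucleus --- here only the directed-join stability of idiomatic \emph{pre-nuclei}, established before the proposition, is used, never a directed join of nuclei. It then checks $j\circ(h\wedge k)=j$, so $h\in G$; since $G$ is composition-closed and $h$ is its largest element, $h^{2}\leq h$, so $h$ is idempotent and lies in $NI(A)$, giving the implication, and Proposition \ref{p1} concludes that $NI(A)$ is a frame. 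If you want to salvage your outline, you would have to replace both of your problematic steps by some such construction; as written, the argument does not go through.
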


\begin{proof}
Let $j\in{NI(A)}$ and $k$ a inflator. Let $G=\{f\mid f\wedge{k}\leq{j}\}$. Let $f,g\in{G}$ and $p=fg$. For all $x\in{A}$ we have that
\[k(x)\wedge{p(x)}=k(x)\wedge{f(g(x))}\leq{f(k(x))\wedge{f(g(x))}}\]
\[=f(k(x)\wedge{g(x)})\leq{f(j(x))}.\]
Then
\[k(x)\wedge{p(x)}\leq{f(j(x))}\wedge{k(x)}\leq{f(j(x))\wedge{k(j(x))}}\]
\[\leq(f\wedge{k})(j(x))\leq{j(j(x))}=j(x).\]
Thus, $G$ is directed and whence $\bigvee{G}$ is idiomatic.

Let $h=\bigvee{G}$. Consider
\[j\circ{(h\wedge{k})}(x)=j(h(x)\wedge{k(x)})=j(h(x)k(x))=j\left(\left(\bigvee{G}\right)(x)k(x)\right)\]
\[=j\left(\bigvee\{g(x)k(x)\mid g\in{G}\}\right)\leq{j\left(\bigvee\{j(g(x)k(x))\mid g\in{G}\}\right)}\]
\[\leq{j\left( \bigvee\{j(g(x)\wedge{k(x)})\mid g\in{G}\}\right)} \leq{j(j(j(x)))}=j(x)\]
Thus, $j\circ{(h\wedge{k})}=j$. Then $h\wedge{k}\leq{j}$, that is, $h\in{G}$. This implies that $h^2=h,$ and so $h\in{NI(A)}$. 

Hence, $NI(A)$ has implication. Consequently,  $NI(A)$ is a frame.
\end{proof} 

\begin{dfn}\label{c}
Let $A$ be a quasi-quantale. An element $1\neq{p}\in{A}$ is {\em prime} if whenever $ab\leq{p}$ then $a\leq{p}$ or $b\leq{p}$.
\end{dfn}

\begin{dfn}\label{d} Let $A$ be a quasi-quantale and $B$ a sub $\bigvee$-semilattice.  We say that $B$ is a {\em subquasi-quantale of $A$} if 
\[\left(\bigvee{X}\right)a=\bigvee\{xa\mid x\in{X}\}\] and
\[a\left(\bigvee{Y}\right)=\bigvee\{ay\mid y\in{Y}\},\]
for all directed subsets $X,Y\subseteq{B}$ and $a\in{B}$.
\end{dfn}

\begin{dfn}\label{e}
Let $A$ be a quasi-quantale and $B$ a subquasi-quantale of $A$. An element $1\neq{p}\in{A}$ is a {\em prime element relative to $B$} if whenever $ab\leq{p}$ with $a,b\in{B}$ then $a\leq{p}$ or $b\leq{p}$.
\end{dfn}

It is clear that a prime element in $A$ is a prime element relative to $A$. 

We define the spectrum relative to $B$ of $A$ as
\[Spec_B(A)=\{p\in{A}\mid p\;is\;prime\;relative\;to\;B\}\]

If $B=A$, the we just write $Spec(A)$.

\begin{lemma}\label{sc}
Let $B$ be a subquasi-quantale of a quasi-quantale $A$. Suppose that $0,1\in{B}$ and $1b,b1\leq{b}$ for all $b\in{B}$, then
\begin{enumerate}
	\item $ab\leq{a\wedge{b}}$ for all $a,b\in{B}.$
	\item Let $p\in{A}$ a prime element relative to $B$. If $a,b\in{B}$, then $ab\leq{p}$ if and only if $a\leq{p}$ or $b\leq{p}$.
\end{enumerate}
\end{lemma}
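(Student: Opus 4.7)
The plan is to prove (1) first using the subquasi-quantale distributivity inside $B$ combined with the standing hypothesis $1b, b1 \leq b$, and then obtain (2) essentially for free by combining (1) with the definition of primeness.

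For part (1), fix $a,b \in B$. Since $0,1 \in B$, the pair $\{a,1\} \subseteq B$ is directed with supremum $1$ (as $a \leq 1$). Because $B$ is a subquasi-quantale, the law (RDQ) applies to this directed subset, giving
\[
1b = (a \vee 1)b = ab \vee 1b,
\]
so $ab \leq 1b$. The hypothesis $1b \leq b$ then yields $ab \leq b$. The symmetric argument, using that $\{b,1\}$ is directed in $B$ with supremum $1$ and applying (LDQ) together with $a1 \leq a$, gives
\[
a1 = a(b \vee 1) = ab \vee a1,
\]
hence $ab \leq a1 \leq a$. Combining the two inequalities yields $ab \leq a \wedge b$, as desired. (This is essentially the same calculation as in Proposition \ref{003}(2)(a), carried out inside $B$.)

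For part (2), the forward implication is nothing but the definition of a prime element relative to $B$: if $ab \leq p$ with $a,b \in B$, then $a \leq p$ or $b \leq p$. For the converse, suppose $a \leq p$ or $b \leq p$. By part (1) we have $ab \leq a \wedge b$, so in either case $ab \leq a \wedge b \leq p$.

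The proof is essentially bookkeeping; the only point needing care is recognizing that although Proposition \ref{003}(2)(a) is stated for the ambient quasi-quantale with $1a \leq a$ everywhere, we only need its conclusion for elements of $B$, and the subquasi-quantale axiom together with the weaker hypothesis $1b, b1 \leq b$ for $b \in B$ is exactly what is needed to run that argument internally. No genuine obstacle is expected.
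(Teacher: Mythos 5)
Your proof is correct and follows essentially the same route as the paper: the paper's part (1) simply cites Proposition \ref{003}(2)(a) together with $1b\leq b$, and your argument just unfolds that same directed-set computation (with $\{a,1\}$ and $\{b,1\}$) inside $B$, while part (2) is identical. If anything, your version is slightly more careful, since the hypothesis of Proposition \ref{003}(2)(a) is stated for all of $A$ and you correctly note that the argument only needs to be run internally in $B$ where $1b,b1\leq b$ holds.
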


\begin{proof}
$(1)$. It follows by Proposition \ref{003}.$(2)$.$(a)$ that $ab\leq{a\wedge{1b}}$. By hypothesis $1b\leq{b}$, so $ab\leq{a\wedge{b}}$.

$(2)$. Let $a,b\in{B}$ and $p\in{A}$ a prime element relative to $B$. 
First, if $ab\leq p,$ 
then, by definition, it follows that $a\leq p$ or $b\leq p.$

Conversely, suppose that $a\leq{p}$. By $(1),$ $ab\leq{a\wedge{b}}\leq{a}$, then $ab\leq{p}$. Analogously if $b\leq{p}$.
\end{proof}

From now on, let $B$ be a subquasi-quantale of a quantale $A$ and suppose that $0,1\in{B}$ and $1b,b1\leq{b}$ for all $b\in{B}$.

\begin{prop}\label{t}
Let $B$ be a subquasi-quantale of a quasi-quantale $A$. Then $Spec_B(A)$ is a topological space, where the closed subsets are subsets given by
\[\mathcal{V}(b)=\{p\in{Spec_B(A)}\mid b\leq{p}\},\]
with $b\in{B}$. 

In dual form, the open subsets are of the form
\[\mathcal{U}(b)=\{p\in{Spec_B(A)}\mid b\nleq{p}\},\]
with $b\in{B}$.
\end{prop}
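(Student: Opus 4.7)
The plan is to verify the three Kuratowski axioms for the family $\{\mathcal{V}(b)\mid b\in B\}$: that it contains $\emptyset$ and $Spec_B(A)$, is closed under finite unions, and is closed under arbitrary intersections. For each operation I will exhibit an explicit witness $b\in B$; this uses only that $B$ is a sub $\bigvee$-semilattice (so it is closed under arbitrary suprema), that the product lands back in $B$, and crucially the characterization of primality given in Lemma \ref{sc}(2).

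First I will handle the trivial extremes. Since a prime element is by definition $\neq 1$, no $p\in Spec_B(A)$ satisfies $1\leq p$, hence $\mathcal{V}(1)=\emptyset$. Because $0\leq p$ for every $p$, we get $\mathcal{V}(0)=Spec_B(A)$. Both $0$ and $1$ lie in $B$ by assumption, so these are legitimate closed sets. For arbitrary intersections, given a family $\{b_i\}_{i\in I}\subseteq B$, the equivalence $b_i\leq p$ for all $i$ iff $\bigvee_i b_i\leq p$ immediately yields
\[\bigcap_{i\in I}\mathcal{V}(b_i)=\mathcal{V}\!\left(\bigvee_{i\in I} b_i\right),\]
and the supremum lies in $B$ since $B$ is a sub $\bigvee$-semilattice.

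The main step, where primality is used, is closure under finite unions. I will show $\mathcal{V}(a)\cup\mathcal{V}(b)=\mathcal{V}(ab)$ for $a,b\in B$. If $p\in\mathcal{V}(a)\cup\mathcal{V}(b)$, say $a\leq p$, then by Lemma \ref{sc}(1) we have $ab\leq a\wedge b\leq a\leq p$, so $p\in\mathcal{V}(ab)$; the case $b\leq p$ is symmetric. Conversely, if $ab\leq p$ then Lemma \ref{sc}(2) forces $a\leq p$ or $b\leq p$, putting $p$ in $\mathcal{V}(a)\cup\mathcal{V}(b)$. Since $B$ is a subquasi-quantale, the product $ab$ belongs to $B$, so $\mathcal{V}(ab)$ is one of our distinguished closed sets, and iterating gives closure under all finite unions.

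I don't foresee a real obstacle; the only subtle point is that the equality $\mathcal{V}(a)\cup\mathcal{V}(b)=\mathcal{V}(ab)$ (rather than $\mathcal{V}(a\wedge b)$, which might fail to lie in $B$) is exactly what primality \emph{relative to $B$} was designed to deliver, together with the compatibility condition $1b,b1\leq b$ used in Lemma \ref{sc} to get $ab\leq a\wedge b$. The dual statement for open sets $\mathcal{U}(b)$ is then just the set-theoretic complement of the above.
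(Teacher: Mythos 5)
Your proposal is correct and follows essentially the same route as the paper: the paper verifies the open-set axioms ($\mathcal{U}(0)=\emptyset$, $\mathcal{U}(1)=Spec_B(A)$, arbitrary unions via $\mathcal{U}(\bigvee b_i)=\bigcup\mathcal{U}(b_i)$, finite intersections via $\mathcal{U}(ab)=\mathcal{U}(a)\cap\mathcal{U}(b)$ using Lemma \ref{sc}(2)), and your closed-set argument is just the set-theoretic dual with the same witnesses $\bigvee_i b_i$ and $ab$. Both arguments rely on the same standing hypotheses ($0,1\in B$, $1b,b1\leq b$) and on Lemma \ref{sc}, so there is nothing to add.
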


\begin{proof}
It clear that $\mathcal{U}(0)=\emptyset$ and $\mathcal{U}(1)=Spec_B(A)$. Let $\{b_i\}_I$ be a family of elements in $B$. Then
\[\mathcal{U}\left(\bigvee_I{b_i}\right)=\left\{p\in{Spec_B(A)}\mid\bigvee_I{b_i}\nleq{p}\right\}=\bigcup_I\{p\in{Spec_B(A)}\mid b_i\nleq{p}\}\]
\[=\bigcup_I{\mathcal{U}(b_i)}\]
Now, let $a,b\in{B}$. Then, by Lemma \ref{sc}.$(2),$
\[\mathcal{U}(ab)=\{p\in{Spec_B(A)}\mid ab\nleq{p}\}=\{p\in{Spec_B(A)}\mid a\nleq{p}\;and\;b\nleq{p}\}\]
\[=(\{p\in{Spec_B(A)}\mid a\nleq{p}\})\cap(\{p'\in{Spec_B(A)}\mid b\nleq{p'}\})=\mathcal{U}(a)\cap\mathcal{U}(b).\]
\end{proof}

\begin{rem}
Let $\mathcal{O}(Spec_B(A))$ be the frame of open subsets of $Spec_B(A)$. We have an adjunction of $\bigvee$-morphisms 
\[\xymatrix@=30mm{B\ \ \ar@/^/[r]^{\mathcal{U}} & \mathcal{O}(Spec_B(A))\ \ \ar@/^/[l]^{\mathcal{U}_*}}\]
Where $\mathcal{U}_*$ is defined as
\[\mathcal{U}_*(W)=\bigvee\{b\in{B}\mid\mathcal{U}(b)\subseteq{W}\}\]

The composition $\mu=\mathcal{U}_*\circ\mathcal{U}$ is a closure operator in $B$.
\end{rem}

\begin{prop}\label{41}
Let $b\in{B}$. Then $\mu(b)$ is the largest element in $B$ such that $\mu(b)\leq\bigwedge\{p\in{Spec_B(A)}\mid p\in\mathcal{V}(b)\}.$
\end{prop}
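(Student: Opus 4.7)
The plan is to unwind the definition
\[
\mu(b) \;=\; \mathcal{U}_{*}\bigl(\mathcal{U}(b)\bigr) \;=\; \bigvee\{c\in B\mid \mathcal{U}(c)\subseteq \mathcal{U}(b)\},
\]
(which lies in $B$ since $B$ is a sub $\bigvee$-semilattice of $A$), and then to prove the two characterizing properties separately: (i) $\mu(b)\leq p$ for every $p\in\mathcal{V}(b)$, and (ii) $\mu(b)$ is maximal in $B$ with this property.

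For (i), I would fix $p\in \mathcal{V}(b)$, i.e.\ $b\leq p$, and show that every $c\in B$ appearing in the join defining $\mu(b)$ satisfies $c\leq p$. The argument is by contradiction: if $c\nleq p$, then $p\in\mathcal{U}(c)\subseteq\mathcal{U}(b)$, which forces $b\nleq p$, against $p\in\mathcal{V}(b)$. Passing to the supremum (which is stable under taking it in $B$) yields $\mu(b)\leq p$, and hence $\mu(b)\leq \bigwedge\{p\in Spec_B(A)\mid p\in\mathcal{V}(b)\}$.

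For (ii), suppose $c\in B$ satisfies $c\leq p$ for every $p\in\mathcal{V}(b)$; I must show $c\leq\mu(b)$. By construction of $\mu(b)$ as a supremum of elements $c'\in B$ with $\mathcal{U}(c')\subseteq\mathcal{U}(b)$, it is enough to verify $\mathcal{U}(c)\subseteq\mathcal{U}(b)$. So let $q\in\mathcal{U}(c)$, i.e.\ $c\nleq q$. If we had $q\in\mathcal{V}(b)$, then the hypothesis would give $c\leq q$, a contradiction; therefore $q\notin\mathcal{V}(b)$, which means $b\nleq q$, i.e.\ $q\in\mathcal{U}(b)$. Hence $\mathcal{U}(c)\subseteq\mathcal{U}(b)$, so $c$ is one of the elements appearing in the supremum defining $\mu(b)$, giving $c\leq \mu(b)$.

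Conceptually this is simply the standard Galois/adjunction picture: the adjunction $\mathcal{U}\dashv \mathcal{U}_{*}$ between $B$ and $\mathcal{O}(Spec_B(A))$ gives a closure operator $\mu$ on $B$, and closures of this form are always characterized as the largest element below a certain meet. I do not expect any serious obstacle; the only delicate point is to remember that the meet $\bigwedge\mathcal{V}(b)$ is taken in $A$ (where elements of $\mathcal{V}(b)$ live), while the maximality is tested among elements of $B$, and to check at the end of step (i) that the join $\mu(b)$ stays in $B$ thanks to the sub $\bigvee$-semilattice hypothesis.
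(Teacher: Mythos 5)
Your proof is correct and follows essentially the same route as the paper: both unwind $\mu(b)=\bigvee\{c\in B\mid\mathcal{U}(c)\subseteq\mathcal{U}(b)\}$, show each such $c$ lies below every $p\in\mathcal{V}(b)$, and prove maximality by checking that any $x\in B$ below all of $\mathcal{V}(b)$ belongs to that join. The only cosmetic difference is that the paper phrases the containment as $\mathcal{V}(b)\subseteq\mathcal{V}(c)$ while you use the complementary condition $\mathcal{U}(c)\subseteq\mathcal{U}(b)$, which is the same thing.
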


\begin{proof}
By definition, 
\[\mu(b)=\mathcal{U}_*(\mathcal{U}(b))=\bigvee\{c\in{B}\mid \mathcal{U}(c)\subseteq\mathcal{U}(b)\}\]
\[=\bigvee\{c\in{B}\mid\mathcal{V}(b)\subseteq\mathcal{V}(c)\}\leq\bigwedge\{p\in{Spec_B(A)}\mid p\in\mathcal{V}(b)\}.\]
Let $x\in{B}$ such that $x\leq{p}$ for all $p\in\mathcal{V}(b)$, then $\mathcal{V}(b)\subseteq\mathcal{V}(x).$ Thus, $x\leq\bigvee\{c\in{A}\mid\mathcal{V}(b)\subseteq\mathcal{V}(c)\},$ whence $x\leq\mu(b)$.
\end{proof}

\begin{thm}\label{3.15}
The closure operator in $\mu\colon B\rightarrow{B}$ is a multiplicative pre-nucleus. 
\end{thm}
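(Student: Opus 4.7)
The plan is to exploit the Galois adjunction $\mathcal{U} \dashv \mathcal{U}_*$ defining $\mu$, together with the identity $\mathcal{U}(ab) = \mathcal{U}(a) \cap \mathcal{U}(b)$ (for $a,b \in B$) established inside the proof of Proposition \ref{t} as a direct consequence of Lemma \ref{sc}.(2). Since a right adjoint of a $\bigvee$-semilattice morphism preserves all meets, multiplicativity of $\mu$ should drop out in one line, and the pre-nucleus identity should then follow from a monotonicity sandwich.

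For the multiplicativity identity $\mu(ab) = \mu(a) \wedge \mu(b)$, I would compute
\[
\mu(ab) = \mathcal{U}_*(\mathcal{U}(ab)) = \mathcal{U}_*(\mathcal{U}(a) \cap \mathcal{U}(b)) = \mathcal{U}_*(\mathcal{U}(a)) \wedge \mathcal{U}_*(\mathcal{U}(b)) = \mu(a) \wedge \mu(b),
\]
where the second equality uses Proposition \ref{t}/Lemma \ref{sc}.(2) and the third uses that $\mathcal{U}_*$, being a right adjoint, preserves meets.

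For the pre-nucleus identity $\mu(a \wedge b) = \mu(a) \wedge \mu(b)$ (where the meet $a \wedge b$ is understood in $B$), I would chain inequalities using monotonicity of the closure operator $\mu$ together with Lemma \ref{sc}.(1). That lemma gives $ab \leq a \wedge b \leq a,\, b$, so monotonicity yields
\[
\mu(ab) \leq \mu(a \wedge b) \leq \mu(a) \wedge \mu(b).
\]
Combined with the multiplicativity just proved, the outer terms coincide and therefore all three quantities are equal.

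The main obstacle is a bookkeeping subtlety: one must justify that $\mathcal{U}_*$ preserves binary meets with values in $B$ rather than in $A$. This is routine for order right adjoints between complete lattices, but if one prefers to avoid the adjunction machinery, Proposition \ref{41} supplies a direct replacement. Namely, for every $p \in \mathcal{V}(ab) = \mathcal{V}(a) \cup \mathcal{V}(b)$, Lemma \ref{sc}.(2) forces $\mu(a) \leq p$ or $\mu(b) \leq p$, whence $\mu(a) \wedge \mu(b) \leq p$; the maximality clause of Proposition \ref{41} then yields $\mu(a) \wedge \mu(b) \leq \mu(ab)$, the reverse inequality being immediate from monotonicity of $\mu$ applied to $ab \leq a$ and $ab \leq b$.
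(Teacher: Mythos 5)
Your argument is correct, and its main route is genuinely different from the paper's. For multiplicativity the paper argues element-wise with primes: from $ab\leq a\wedge b$ it gets $\mu(ab)\leq\mu(a)\wedge\mu(b)$, and for the converse it shows that every $p\in\mathcal{V}(ab)$ lies above $\mu(a)\wedge\mu(b)$ (using Lemma \ref{sc}(2) and the first half of Proposition \ref{41}) and then invokes the maximality clause of Proposition \ref{41} to conclude $\mu(a)\wedge\mu(b)\leq\mu(ab)$ --- which is exactly your ``fallback'' argument, so that part of your proposal coincides with the paper. Your primary route instead computes $\mu(ab)=\mathcal{U}_*(\mathcal{U}(a)\cap\mathcal{U}(b))=\mathcal{U}_*(\mathcal{U}(a))\wedge\mathcal{U}_*(\mathcal{U}(b))$, using that $\mathcal{U}(ab)=\mathcal{U}(a)\cap\mathcal{U}(b)$ (already established in Proposition \ref{t} via Lemma \ref{sc}(2)) and that a right adjoint between complete lattices preserves meets. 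This is shorter and arguably cleaner on a point the paper glosses over: the adjunction lands in $B$, so the meet produced is automatically the meet computed in $B$, whereas the paper's maximality step tacitly treats $\mu(a)\wedge\mu(b)$ as an element of $B$ (harmless in the intended application $B=\Lambda^{fi}(M)\subseteq\Lambda(M)$, where $B$ is closed under intersections, but worth noting in the abstract setting). The price is the extra (routine) verification that $\mathcal{U}_*$ is indeed the right adjoint of the join-preserving map $\mathcal{U}$, which the paper's concrete prime-chasing avoids. Your treatment of the pre-nucleus identity, the sandwich $\mu(ab)\leq\mu(a\wedge b)\leq\mu(a)\wedge\mu(b)$ combined with multiplicativity, is essentially the paper's own argument.
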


\begin{proof}
Let $a,b\in{B}$. By Lemma \ref{sc}.$(1)$, $ab\leq{a\wedge{b}}$. Thus $\mu(ab)\leq\mu(a)\wedge\mu(b)$.

By Proposition \ref{41},
\begin{displaymath}
\mu(a)\wedge\mu(b){\leq}\left(\bigwedge\{q\in{Spec_B(A)}{\mid} q{\in}\mathcal{V}(a)\}\right)\wedge\left(\bigwedge\{q'\in{Spec_B(A)}\mid q'{\in}\mathcal{V}(b)\}\right).
\end{displaymath}

Let $p\in{Spec_B(A)}$ such that $ab\leq{p}$, then $a\leq{p}$ or $b\leq{p}$. If $a\leq{p}$ then, 
\[\bigwedge\{q\in{Spec_B(A)}\mid q\in\mathcal{V}(a)\}\}\leq{p}.\]
Thus, $\mu(a)\wedge\mu(b)\leq{p}$. 
Analogously, if $b\leq{p}$ then $\mu(a)\wedge\mu(b)\leq{p}$. Hence
\[\mu(a)\wedge\mu(b)\leq\bigwedge\{p\in{Spec_B(A)}\mid p\in\mathcal{V}(ab)\}\]
By Proposition \ref{41}, $\mu(ab)$ is the largest element in $B$ less or equal than $\bigwedge\{p\in{Spec_B(A)}\mid p\in\mathcal{V}(ab)\}$, therefore $\mu(a)\wedge\mu(b)\leq\mu(ab)$. Thus $\mu$ is multiplicative. 

Now, since $ab\leq{a\wedge{b}}$ then $\mu(a)\wedge\mu(b)=\mu(ab)\leq\mu(a\wedge{b})$. The other inequality always holds. Thus $\mu$ is a pre-nucleus.
\end{proof}

\begin{cor}
$A_\mu$ is an meet-continuous lattice.
\end{cor}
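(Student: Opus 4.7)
The plan is to recognize this as a direct application of Proposition \ref{6}, once we verify that $\mu$ satisfies all the hypotheses needed there. Since Proposition \ref{6} requires the ambient object to be a quasi-quantale and the operator to be a \emph{multiplicative nucleus}, I would proceed by checking each piece of this structure in turn on the domain of $\mu$, namely $B$.

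First, I would note that $B$, being a subquasi-quantale of $A$ (in the sense of Definition \ref{d}), is itself a quasi-quantale with respect to the inherited product: the two directed distributivity laws (RDQ) and (LDQ) are exactly what the subquasi-quantale axioms demand, and associativity is inherited from $A$. So the hypothesis of Proposition \ref{6} concerning the ambient $\bigvee$-semilattice structure is met by $B$ (and we should then read $A_\mu$ in the statement as the set of $\mu$-fixed points inside $B$).

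Second, I would assemble the fact that $\mu$ is a multiplicative nucleus. The remark preceding Proposition \ref{41} already records that $\mu=\mathcal{U}_{*}\circ\mathcal{U}$ is a closure operator on $B$; in particular $\mu$ is an inflator and $\mu^{2}=\mu$. Theorem \ref{3.15} then gives two further properties: $\mu$ is a pre-nucleus (so $\mu(a\wedge b)=\mu(a)\wedge\mu(b)$) and, more strongly, $\mu$ is multiplicative, i.e.\ $\mu(ab)=\mu(a)\wedge\mu(b)$ for all $a,b\in B$. Combining idempotency with the pre-nucleus property yields that $\mu$ is a nucleus on $B$, and the multiplicativity from Theorem \ref{3.15} upgrades it to a multiplicative nucleus.

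Finally, I would simply invoke Proposition \ref{6} on the quasi-quantale $B$ with the multiplicative nucleus $\mu$ to conclude that the lattice of fixed points is meet-continuous. There is essentially no genuine obstacle: the whole content of the corollary has been pushed into Theorem \ref{3.15} (multiplicativity of $\mu$) and into the general Proposition \ref{6} (multiplicative nuclei produce meet-continuous fixed-point lattices). The only care needed is the bookkeeping mentioned above, namely that $\mu$ lives on $B$ and so the ``$A_\mu$'' of the corollary must be interpreted as $B_\mu$.
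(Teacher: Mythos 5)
Your proposal is correct and follows exactly the paper's route: the paper's proof is simply an appeal to Proposition \ref{6}, and you supply the same justification (Theorem \ref{3.15} plus idempotency of the closure operator make $\mu$ a multiplicative nucleus on the quasi-quantale $B$, so its fixed points form a meet-continuous lattice). Your remark that the corollary's ``$A_\mu$'' should be read as $B_\mu$, the fixed points of $\mu$ in $B$, is a fair piece of bookkeeping but not a different argument.
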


\begin{proof}
It follows by Proposition \ref{6}.
\end{proof}

\section{The Large Spectrum}\label{sec4}

In this section, we introduce a new spectrum for a module and we give some characterizations of modules with this space.


Firstly, recall that for any left $R$-module $M,$ in \cite[Lemma 2.1]{B}  was defined the product  of submodules $N,L\in{\Lambda(M)},$
	\[N_ML:=\sum\{f(N)|f\in\Hom_{\R}(M,L)\}.\]
	
\begin{rem}\label{tpro} \cite[ Proposition 1.3]{Lg}
\; This product satisfies the following properties for every submodules $K,K'\in\Lambda(M)$:
\begin{enumerate}[(1)]
\item If $K\subseteq K'$ then $K_{M}X\subseteq K'_{M}X$ for every module $X$.
\item If $X$ is a left module and $Y\subseteq X$ then $K_{M}Y\subseteq K_{M}X$.
\item $M_{M}X\subseteq X$ for every module $X$.
\item $K_{M}X=0$ if and only if $f(K)=0$ for all $f\in \Hom(M,X)$.
\item $0_{M}X=0$, for every module $X$.
\item Let $\{X_{i}\mid i\in I\}$ be a family of submodules of $M$ then $\sum_{i\in I}\left(K_MX_i\right)\subseteq K_{M}\left(\sum_{i\in I}X_i\right)$.
\item $\left(\sum_{i\in I}K_{i}\right)_{M}N=\sum_{i\in I}K_{iM}N$ for every family of submodules $\{K_{i}\mid i\in I\}$ of $M$.

\end{enumerate}

\end{rem}
\begin{proof}
The proof of this is in \cite[ Proposition 1.3]{Lg}. Here we only give the proof of $(6)$ and $(7)$.

$(6)$ Let $\{X_{i}\mid i\in I\}$ be a family of submodules of $M$. Since $X_i\leq\sum_{i\in I}X_i$, by (2) of this proposition $\left(K_MX_i\right)\subseteq K_{M}\left(\sum_{i\in I}X_i\right)$. Thus,
\[\sum_{i\in I}\left(K_MX_i\right)\subseteq K_{M}\left(\sum_{i\in I}X_i\right).\]

$(7)$ Let $\{K_{i}\mid i\in I\}$ be a family the submodules of $M$. Then
\[\left(\sum_{i\in I}K_{i}\right) _{M}N=\sum\left\{\left(\sum_{i\in I}K_i\right)|f\in\Hom_{\R}(M,N)\right\}\]
\[=\sum\left\{\sum_{i\in I}f\left(K_i\right)|f\in\Hom_{\R}(M,N)\right\}=\sum_{i\in I}\sum\left\{f(K_i)|f\in\Hom_{\R}(M,N)\right\}\]
\[=\sum_{i\in I}\left({K_i}_MN\right).\]
\end{proof}

Now, recall that an $R-$module $N$ is said to be subgenerated by $M$ if $N$ is isomorphic to a submodule of an $M-$generated module. It is denoted by $\sigma[M]$ the full subcategory of $\RMod$ whose objects are all $R-$modules subgenerated by $M.$   Also,  a  module $N$ is called a subgenerator in $\sigma[M]$ if $\sigma[ M]=\sigma[N]$. In particular, $M$ is a subgenerator in $\RMod$ if $\sigma[M]=\RMod.$	See \cite[\S 15]{W} for more details.
	
Remember that $N\leq M$ is a fully invariant submodule of $M$ if $f(N)\leq N$ for all $f$ endomorphism of $M$. The set of fully invariant submodules of $M$ is denoted by $\Lambda^{fi}(M)$.

\begin{rem}\label{tpro1}
Let $M\in \RMod$ and projective in $\sigma[M]$.  The following conditions hold.
\begin{enumerate}[(1)]
\item The product $-_{M}-:\Lambda(M)\times \Lambda(M)\to \Lambda(M)$ is associative. 
\item $K_{M}\left(\sum_{i\in I}X_{i}\right)=\sum_{i}\left(K_{M}X_{i}\right)$ for every directed family of submodules $\{X_{i}\mid i\in I\}$ of $M$.
\item If $N,L\in \Lambda^{fi}(M),$ then  $N_{M}L\in  \Lambda^{fi}(M)$ and hence the product $-_{M}-$ is well restricted in $\Lambda^{fi}(M).$
\end{enumerate}
\end{rem}
\begin{proof}
$(1)$ It follows by \cite[Proposition 5.6]{Bc}.

$(2)$ Let $\{X_i|i\in I\}$ be a directed family of submodules of $M$. Let $\sum_{j\in J} f_j(k_j)\in K_M\left(\sum_{i\in I}X_i\right)$, with $f_j:M\to \sum_{i\in I}X_i$. Since $M$ is projective in $\sigma[M]$ for each $f_j$ there exists $g_{i_j}:M\to X_i$ such that $\sum_{i\in I}g_{i_j}(k_j)=f_j(k_j).$
\[\xymatrix{ & M\ar[d]^{f_j}\ar[dl]_{\bigoplus g_{i_j}} \\ \bigoplus_{i\in I}X_i\ar@{-{>>}}[r] & \sum_{i\in I}X_i}\]

Then 
\[\sum_{j\in J}f_j(k_j)=\sum_{j\in{J}}\sum_{i\in I}g_{i_j}(k_j)\in\sum{X_{i_j}}.\]
Since this sum is finite and $\{X_i|i\in I\}$ is directed, there exists $l\in I$ such that $\sum{X_{i_j}}\subseteq{X_l}$. Thus
\[\sum_{j\in J}f_j(k_j)=\sum_{j\in{J}}\sum_{i\in I}g_{i_j}(k_j)\in K_MX_l\subseteq\sum_{i\in I}\left(K_{M}X_{i}\right)\]

The other contention follows by \ref{tpro}.(6).

$(3)$ Let $N,L\in\Lambda^{fi}(M)$ and $g:M\to M$. Then 
\[g(N_ML)=g\left(\sum\{f(N)|f\in\Hom_{\R}(M,L)\}\right)=\sum\{gf(N)|f\in\Hom_{\R}(M,L)\}\]
Since $L\in\Lambda^{fi}(M)$, $gf\in\Hom_{\R}(M,L)$. Hence
\[g(N_ML)\subseteq N_ML.\]
\end{proof}

Notice that in Remark \ref{tpro1} (1),  the projectivity condition for $M$ is necessary as  is shown in the following example which was taken from \cite[Lemma  2.1 (vi)]{B}:   consider $\mathbb{Z}$ the additive group of integers and $\mathbb{Q}$ the additive group of rational numbers, then $0=(\mathbb{Z\,}_{\,\mathbb{Q}}\,\mathbb{Z}\,)_{\mathbb{Q}}\, \mathbb{Q}\neq \mathbb{Z\,}_{\mathbb{Q}} \,(\mathbb{Z\,} _{\,\mathbb{Q}\,} \mathbb{Q} )=\mathbb{Q}.$

\begin{prop}\label{pro1} Let $M\in \RMod.$ If $M$ is projective in $\sigma[M],$ the following conditions hold.
\begin{enumerate}[(1)]
\item $\Lambda(M)$ is a quasi-quantale. In fact, for all subset $\{N_i\}_\EuScript{J},$ 
	$(\sum_{\EuScript{J}}{N_i})_{M}L=\sum_{\EuScript{J}}{({N_{i}}_{M}L)}.$

\item $\Lambda^{fi}(M)$ is a right-unital subquasi-quantale of $\Lambda(M)$.
\end{enumerate}
\end{prop}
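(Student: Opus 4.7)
The plan is to treat this proposition as the assembly of the ingredients already prepared in Remarks \ref{tpro} and \ref{tpro1}, rather than as a substantially new computation. The essential content has been isolated into the product properties (1)--(7) of Remark \ref{tpro} and the three consequences of projectivity in Remark \ref{tpro1}; what remains is to check that these are exactly what Definitions \ref{a} and \ref{d} ask for.

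For part (1), I would first record that $\Lambda(M)$ is a complete $\bigvee$-semilattice under sums of submodules, with bottom $0$. Associativity of $-_M-$ is given by Remark \ref{tpro1}(1), which is precisely where projectivity of $M$ in $\sigma[M]$ is used. It then suffices to verify the two directed distributivity laws (RDQ) and (LDQ). The right-hand distribution is in fact the full (RQ) law, a direct quotation of Remark \ref{tpro}(7) applied to an arbitrary (not necessarily directed) family $\{N_i\}$: $(\sum N_i)_M L = \sum (N_i{}_M L)$. The left-hand distribution, restricted to directed families, is exactly Remark \ref{tpro1}(2), which again relies on projectivity of $M$ in $\sigma[M]$ to lift $f_j\colon M\to\sum X_i$ through a directed chain. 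Together these give (RDQ) and (LDQ), so $\Lambda(M)$ is a quasi-quantale and the extra full distributivity on the right is recorded.

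For part (2), the plan is to verify the three items in Definition \ref{d} for $B=\Lambda^{fi}(M)\subseteq\Lambda(M)$, plus the right-unital condition. First, $\Lambda^{fi}(M)$ is a sub-$\bigvee$-semilattice of $\Lambda(M)$: arbitrary sums of fully invariant submodules are fully invariant since for any $g\in\End(M)$ one has $g(\sum_i N_i)=\sum_i g(N_i)\subseteq\sum_i N_i$, and $0,M\in\Lambda^{fi}(M)$. Second, the product $-_M-$ restricts to $\Lambda^{fi}(M)$ by Remark \ref{tpro1}(3). Third, the directed distributivity laws (RDQ) and (LDQ) already hold in $\Lambda(M)$ by part (1), so they hold a fortiori when the elements involved are taken in the subquasi-quantale. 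Finally, for the right-unital structure one checks that $M$ itself is the right identity: for $N\in\Lambda^{fi}(M)$,
\[N_M M=\sum\{f(N)\mid f\in\Hom_R(M,M)\}\subseteq N\]
because $N$ is fully invariant, while the reverse inclusion $N\subseteq N_M M$ comes from taking $f=\mathrm{id}_M$. Hence $N_M M=N$ for every $N\in\Lambda^{fi}(M)$, which is the right-unital identity with $e=M$.

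The only step requiring any care is ensuring that the distributivity in part (1) is the right shape for Definition \ref{a}, that is, that the directedness hypothesis in (LDQ) is genuinely needed and not sneaked around: one cannot in general expect $N_M(\sum L_i)=\sum(N_M L_i)$ for arbitrary families because the lifting argument in Remark \ref{tpro1}(2) uses the finite-sum trick together with directedness to land inside a single $L_l$. I do not expect an obstacle beyond bookkeeping; the real content lies in Remark \ref{tpro1}, and this proposition is essentially a formal repackaging.
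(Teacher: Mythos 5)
Your proposal is correct and follows essentially the same route as the paper: associativity from Remark \ref{tpro1}(1), the distributivity laws from Remark \ref{tpro}(7) and Remark \ref{tpro1}(2), the restriction of the product to $\Lambda^{fi}(M)$ from Remark \ref{tpro1}(3), and the identity $N_MM=N$ via full invariance plus $\mathrm{id}_M$. The extra verifications you include (that $\Lambda^{fi}(M)$ is closed under sums, and the remark on why directedness is needed in (LDQ)) are just the details the paper leaves implicit.
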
	

\begin{proof} (1) By Remark \ref{tpro1} (1), the product $-_{M}-:\Lambda(M)\times \Lambda(M)\to \Lambda(M)$ is associative, and by Remark \ref{tpro} (7) and Remark \ref{tpro1} (2), $\Lambda(M)$ is a quasi-quantale.

(2) Since the product $-_{M}-$ is well restricted in $\Lambda^{fi}(M)$, then $\Lambda^{fi}(M)$ is a subquasi-quantale. Now, if $N\in\Lambda^{fi}(M)$ then
\[N_MM=\sum\{f(N)|f\in\Hom_{\R}(M,M)\}\subseteq{N},\]
but $N=Id_M(N)\leq{N_MM}$. Hence $N_MM=N$. Thus $\Lambda^{fi}(M)$ is a right-unital quasi-quantale.
\end{proof}

In this study, we also have preradicals theory. Recall that a \textit{ preradical} $r$ on $\RMod$ is a subfunctor of the identity functor  on $\RMod,$ i.e. $r$ assigns to each $M$  a submodule $r(M)$ in such a way that every $f\in\Hom_{\R}(M,N)$ restricts to  a homomorphism $r(f)\in \Hom_{\R}(r(M),r(N)).$  In particular, $r(\R)$ is a two-sided ideal.
The big lattice of  preradicals in $\RMod$   is denoted by $\Rpr.$ In fact, $\Rpr$ is a left -unital quasi-quantale with the product  $\wedge$, as was mentioned in  Example \ref{exp1} (3).

Let $M\in \RMod.$ For each $N\in\Lambda^{fi}(M),$  there are two distinguished preradicals, $\alpha^M_N$ and $\omega^M_N,$ which are defined as follow

\[\alpha^{M}_{N}(L){:=}\sum\{f(N) \mid  f\in\Hom_{\R}(M,L)\}\] 
and  
\[\omega^{M}_{N}(L){:=}\bigcap\{f^{-1}(N) \mid f\in\Hom_{\R}(L,M)\},\] 

\noindent for each $L\in \RMod.$  

\begin{rem} \cite[Proposition 5]{pr}. 
If $N$ is a fully invariant submodule of $M,$ then the following conditions are satisfied.
\begin{enumerate}[(1)] 
\item The preradicals $\alpha^M_N$ and $\omega^M_N$ have the property that $\alpha^M_N(M)=N$ and $\omega^M_N(M)=N$ respectively. 
\item The class $\{r\in \Rpr \mid r(M)=N\}$ is precisely the interval $[\alpha^{M}_{N},\omega^{M}_{N}]$.
\end{enumerate}
\end{rem}

 The reader can find more properties of these preradicals in \cite{pr}, \cite{PI}, \cite{S} and 
 \cite[Proposition 1.3]{BPI}.

\begin{dfn}\label{f} 
Let $M,\, P\in\RMod$ such that $P\leq M.$ Define
 \[\eta^M_P\colon\RMod\to \RMod\]
 
 \[\eta^M_P(L):=\bigcap \{f^{-1}(P)\in\RMod | f\in\Hom_{\R}(L,M) \},\]
for each $L\in\RMod.$
\end{dfn} 

It is clear that $\eta^M_P$ is a preradical. Also, it is clear that $\eta^M_P(M)\leq P.$
When $P\in\Lambda^{fi}(M),$ it follows that $P=\eta^M_P(M).$ And in this case, $\omega^M_P=\eta^M_P$.


\begin{rem} \label{rk 6.2}
\begin{enumerate}
\item $\eta^M_P\preceq \omega^M_{\eta^ M_{P} (M)}$
\item If  $N,L\in \Lambda^{fi}(M)$ and $N_ML\leq P,$ then $\omega^M_{(N_ML)}\preceq \eta^M_P.$
\end{enumerate}
\end{rem}

\noindent 

\begin{rem}  \label{rk 6.3}
Let $M\in\RMod.$
\begin{enumerate}
	\item If $P,Q\leq M$ satisfy that $P\leq Q,$ then $\eta^M_P\preceq \eta^M_Q.$
	\item If $r\in\Rpr$  and $P\leq M$ satisfy that $r(M)\leq \eta^M_P(M), $
then \[r\preceq \omega^M_{r(M)}\preceq \eta^M_P \preceq \omega^M_{\eta^ M_{P} (M)}.\]
\end{enumerate}
\end{rem}

\begin{rem}
Let $M$ be an $R$-module. We can consider $Spec(\Lambda^{fi}(M))$. Notice that the prime submodules defined in \cite{PI} are the elements in $Spec(\Lambda^{fi}(M))$.
\end{rem}

\begin{prop} \label{pro 6.4} Let $M\in\RMod$ and $P\leq M.$ If $\eta^M_P$ is a prime preradical i.e. $\eta^M_P\in{Spec(\Rpr)}$ and $\eta^M_P(M)\neq M,$ then $\eta^M_P(M)\in{Spec(\Lambda^{fi}(M))}$. Moreover, 
$\eta^M_P(M)$ is the largest element in $Spec(\Lambda^{fi}(M))$ which is contained in $P.$
\end{prop}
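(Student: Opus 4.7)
The plan is to split the proposition into two claims and tackle them separately: first, that $\eta^M_P(M)$ is a prime element of $\Lambda^{fi}(M)$ in the sense of Definition \ref{c}; second, the maximality statement. Two preliminary observations clear the stage. For any preradical $r$ and any $f\colon M\to M$ one has $f(r(M))\leq r(M)$, so $\eta^M_P(M)$ lies in $\Lambda^{fi}(M)$ automatically. The hypothesis $\eta^M_P(M)\neq M$ is exactly what is needed to ensure $\eta^M_P(M)\neq 1$ in $\Lambda^{fi}(M)$, as required by the definition of a prime element.

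For the primeness step, I would take $N,L\in\Lambda^{fi}(M)$ with $N_{M}L\leq \eta^M_P(M)$ and aim to derive $N\leq\eta^M_P(M)$ or $L\leq\eta^M_P(M)$. The strategy is to transfer the hypothesis into $\Rpr$, where $\eta^M_P$ is assumed prime. Since the intersection defining $\eta^M_P(M)$ includes the term $\mathrm{id}_M^{-1}(P)=P$, we have $\eta^M_P(M)\leq P$, whence $N_{M}L\leq P$ and Remark \ref{rk 6.2}(2) gives $\omega^M_{(N_{M}L)}\preceq \eta^M_P$. Using the associativity of $-_M-$ supplied by Remark \ref{tpro1}(1), evaluation on any $K\in\RMod$ gives
\[(\alpha^M_N\cdot\alpha^M_L)(K)=N_{M}(L_{M}K)=(N_{M}L)_{M}K=\alpha^M_{(N_{M}L)}(K),\]
so $\alpha^M_N\cdot\alpha^M_L=\alpha^M_{(N_{M}L)}\preceq\omega^M_{(N_{M}L)}\preceq\eta^M_P$. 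Primeness of $\eta^M_P$ in $\Rpr$ then forces $\alpha^M_N\preceq\eta^M_P$ or $\alpha^M_L\preceq\eta^M_P$, and evaluating at $M$ (using $\alpha^M_X(M)=X$ for $X\in\Lambda^{fi}(M)$) delivers $N\leq\eta^M_P(M)$ or $L\leq\eta^M_P(M)$.

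The maximality claim is essentially bookkeeping. Let $Q\in Spec(\Lambda^{fi}(M))$ with $Q\leq P$. Since $Q$ is fully invariant, for every $f\colon M\to M$ we have $f(Q)\leq Q\leq P$, so $Q\leq f^{-1}(P)$; intersecting over all such $f$ yields $Q\leq \eta^M_P(M)$.

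The main obstacle is the bridge identity $\alpha^M_N\cdot\alpha^M_L=\alpha^M_{(N_{M}L)}$, which is what allows the hypothesis formulated in $\Lambda^{fi}(M)$ to be transported into $\Rpr$. Its validity rests on associativity of the product $-_M-$, and hence on the standing projectivity assumption on $M$ in $\sigma[M]$; the rest of the argument is essentially a formal translation once that identity is in hand.
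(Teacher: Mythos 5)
Your overall strategy coincides with the paper's: lift the hypothesis $N_ML\leq\eta^M_P(M)$ into $\Rpr$, invoke primeness of $\eta^M_P$ there, and evaluate back at $M$ using $\alpha^M_X(M)=X$; your maximality argument is a correct, slightly more direct version of the paper's (the paper routes through $\omega^M_Q\preceq\eta^M_P$ and evaluates at $M$), and, as in the paper, it uses only that $Q$ is fully invariant with $Q\leq P$. The genuine problem is the ``bridge identity'' $\alpha^M_N\cdot\alpha^M_L=\alpha^M_{(N_ML)}$. To have this as an identity of preradicals you must evaluate at an arbitrary $K\in\RMod$, and the required equality $N_M(L_MK)=(N_ML)_MK$ for arbitrary $K$ is not what Remark \ref{tpro1}(1) provides: that remark only asserts associativity of $-_M-$ as a product on $\Lambda(M)$, i.e.\ for submodules of $M$. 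The general form is true, but its nontrivial inclusion $N_M(L_MK)\subseteq (N_ML)_MK$ (which is exactly the direction you need) rests on $M$ being projective in $\sigma[M]$ (essentially \cite[Proposition 5.6]{Bc}), and projectivity is \emph{not} a hypothesis of Proposition \ref{pro 6.4}: in the paper the standing assumption that $M$ is projective in $\sigma[M]$ is introduced only after this proposition, so as stated the result is meant to hold for arbitrary $M$ and $P\leq M$. As written, your argument proves the proposition only under that extra hypothesis, and even then the citation you give does not cover the step.

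The gap is easily repaired, and the repair is what the paper does: you never need the identity at arbitrary $K$, only its value at $M$. Since $L\in\Lambda^{fi}(M)$, one has $(\alpha^M_N\cdot\alpha^M_L)(M)=\alpha^M_N(\alpha^M_L(M))=\alpha^M_N(L)=N_ML\leq\eta^M_P(M)$, and Remark \ref{rk 6.3}(2) upgrades this single evaluation to the preradical inequality $\alpha^M_N\cdot\alpha^M_L\preceq\omega^M_{(N_ML)}\preceq\eta^M_P$ with no associativity or projectivity input. From there your argument (primeness in $\Rpr$, then evaluation at $M$) goes through verbatim, as does your maximality paragraph.
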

\begin{proof} Let $N,L\in\Lambda^{fi}(M)$ such that $N_ML\leq \eta^M_P(M).$ This is, $(\alpha^M_N\cdot \alpha^M_L)(M)\leq \eta^M_P(M).$ By Remark \ref{rk 6.3}(1)
 it follows that $\alpha^M_N\cdot \alpha^M_L\preceq \eta^M_P. $ Since $\eta^M_P$ is a prime preradical, we get $\alpha^M_N\preceq \eta^M_P$ or $\alpha^M_L\preceq \eta^M_P.$ Thus, $N\leq \eta^M_P(M)$ or  $L\leq \eta^M_P(M).$ Therefore, $\eta^M_P(M)\in{Spec(\Lambda^{fi}(M))}$.
 
 Finally, let $K\in\Lambda^{fi}(M)$ be a prime submodule of $M$ such that $K\leq P.$ Then, $\omega^M_K\preceq \eta^M_P.$ Thus, applying $M$ in the last inequality we obtain $K=\omega^M_K(M)\leq\eta^M_P(M)\leq P.$
\end{proof}

From now on, $M$ will be assumed projective en $\sigma[M]$, in order to have that $\Lambda^{fi}(M)$ is a right-unital subquasi-quantale of $\Lambda(M)$.

Let $M$ be an $R$-module. If $P$ is a prime element of $\Lambda(M)$ relative to $\Lambda^{fi}(M)$, then we will call it {\em a large prime submodule of $M$}. We will write $LgSpec(M)$ instead of $Spec_{\Lambda^{fi}(M)}(\Lambda(M))$.

\begin{prop}\label{pro 6.5} Let $M,\, P\in\RMod$  such that $P\leq M.$
Then, the following conditions are equivalent.
\begin{enumerate}
\item $P$ is a large prime submodule of $M.$
\item $\eta^M_P$ is a prime preradical.
\end{enumerate}
\end{prop}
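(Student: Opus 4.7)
The plan is to prove the equivalence by translating between the quasi-quantale $\Lambda(M)$ with product $-_M-$ and the quasi-quantale $\Rpr$ with composition, using the preradicals $\alpha^M_{(-)}$ and $\eta^M_{(-)}$ as the bridge. Two elementary observations will be used throughout: (i) $\eta^M_P(M)$ is the largest fully invariant submodule of $M$ contained in $P$ --- the inclusion $\eta^M_P(M)\leq P$ is witnessed by $\mathrm{id}_M$, and any $Q\in\Lambda^{fi}(M)$ with $Q\leq P$ satisfies $f(Q)\leq Q\leq P$ for every $f\in\End(M)$, hence $Q\leq\eta^M_P(M)$; (ii) $\alpha^M_L(M)=L$ whenever $L\in\Lambda^{fi}(M)$, since $L_M M=\sum\{f(L)\mid f\in\End(M)\}=L$ by full invariance together with $\mathrm{id}_M(L)=L$.

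For $(1)\Rightarrow(2)$ assume $P$ is large prime; then $P\neq M$ and so $\eta^M_P\neq\underline{1}$. Suppose $r\cdot s\preceq\eta^M_P$ in $\Rpr$; evaluating at $M$ gives $r(s(M))\leq\eta^M_P(M)\leq P$. The crucial one-sided inclusion
\[
r(M)_M s(M)=\sum\{f(r(M))\mid f\in\Hom_{\R}(M,s(M))\}\subseteq r(s(M))
\]
holds because each $f\colon M\to s(M)$ carries $r(M)$ into $r(s(M))$ by the preradical axiom. Since $r(M),s(M)\in\Lambda^{fi}(M)$ and $r(M)_M s(M)\leq P$, the large primality of $P$ forces $r(M)\leq P$ or $s(M)\leq P$. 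In the first case, for any $L\in\RMod$ and any $g\in\Hom_{\R}(L,M)$ the preradical property of $r$ gives $g(r(L))\leq r(M)\leq P$, whence $r(L)\leq g^{-1}(P)$; intersecting over $g$ yields $r(L)\leq\eta^M_P(L)$, i.e.\ $r\preceq\eta^M_P$. The second case is symmetric.

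For $(2)\Rightarrow(1)$ assume $\eta^M_P$ is prime; then $\eta^M_P\neq\underline{1}$, and since $\eta^M_M=\underline{1}$ this forces $P\neq M$. Take $N,L\in\Lambda^{fi}(M)$ with $N_M L\leq P$. By Remark~\ref{tpro1}(3), $N_M L\in\Lambda^{fi}(M)$, so observation (i) gives $N_M L\leq\eta^M_P(M)$. Using (ii), $(\alpha^M_N\cdot\alpha^M_L)(M)=\alpha^M_N(\alpha^M_L(M))=\alpha^M_N(L)=N_M L\leq\eta^M_P(M)$, and Remark~\ref{rk 6.3}(2) then yields $\alpha^M_N\cdot\alpha^M_L\preceq\eta^M_P$. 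Primality of $\eta^M_P$ gives $\alpha^M_N\preceq\eta^M_P$ or $\alpha^M_L\preceq\eta^M_P$; evaluating at $M$ via $\alpha^M_N(M)=N$ and $\alpha^M_L(M)=L$ produces $N\leq\eta^M_P(M)\leq P$ or $L\leq P$, as desired.

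The main obstacle I foresee is the one-sided inclusion $r(M)_M s(M)\subseteq r(s(M))$ in the forward direction: the hypothesis $r\cdot s\preceq\eta^M_P$ delivers information only about the iterated preradical image $r(s(M))$, whereas large primality of $P$ lives in $\Lambda(M)$ with the $-_M-$ product, so one must rewrite composition of preradicals as an $-_M-$ inequality of fully invariant submodules. Once this translation is in hand, the symmetric roles of $\alpha^M_{(-)}$ (carrying fully invariant submodules into preradicals) and $\eta^M_{(-)}$ (carrying preradicals back into fully invariant submodules) make both directions clean.
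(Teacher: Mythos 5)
Your proposal is correct and follows essentially the same route as the paper: both directions translate between $\Lambda(M)$ and $\Rpr$ via $\alpha^M_{(-)}$ and $\eta^M_{(-)}$, using that $\eta^M_P(M)$ is the largest fully invariant submodule inside $P$ and that $(\alpha^M_N\cdot\alpha^M_L)(M)=N_ML$. The only difference is cosmetic: where the paper cites Proposition~\ref{pro 6.4}, \cite[Proposition 14 (2)]{PI} and Remark~\ref{rk 6.3}, you verify the corresponding facts (the inclusion $r(M)_Ms(M)\leq r(s(M))$ and the implication $r(M)\leq P\Rightarrow r\preceq\eta^M_P$) directly from the subfunctor property, which is equally valid.
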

\begin{proof}
(1)$\Rightarrow$(2)  From the hypothesis, it is clear that $P\neq M.$ So, $\eta^M_P\neq \I.$  Let $r,s\in\Rpr$ such that $r\cdot s\preceq \eta^M_P.$ Evaluating this in $M$ and  using \cite[Proposition 14 (2)]{PI}, we obtain $r(M)_Ms(M)\leq\eta^M_P(M).$  Thus, $r(M)_Ms(M)\leq\eta^M_P(M)\leq P.$ so, by hypothesis, it follows that $r(M)\leq P$ or $s(M)\leq P.$ And by Proposition \ref{pro 6.4}, $\eta^M_P(M)$ is the largest fully invariant submodule of $M$ which is contained in $P.$ Hence, $r(M)\leq\eta^M_P(M)$ or $s(M)\leq\eta^M_P(M).$ Consequently, $r\preceq \eta^M_P$ or $s\preceq \eta^M_P.\\$

\noindent (2)$\Rightarrow$ (1)  By hypothesis, $\eta^M_P$ is a prime preradical, so in particular $P\lneqq M.$ Let $N,L\in \Lambda^{fi}(M)$ such that $N_ML=\alpha^M_N(L)\leq P.$ Notice that $N_ML=\alpha^M_N(L)=(\alpha^M_N\cdot \alpha^M_L)( M).$ 
Then, $\alpha^M_N\cdot \alpha^M_L\preceq \omega^M_{N_ML}.$ Thus, by Remark \ref{rk 6.3} (2) we get $\alpha^M_N\cdot \alpha^M_L\preceq  \eta^M_P.$ Since $ \eta^M_P$ is prime, then  $\alpha^M_N\preceq \eta^M_P$ or $\alpha^M_L\preceq \eta^M_P.$
Applying $M$ we obtain $N=\alpha^M_N(M)\leq \eta^M_P(M)\leq P$ or $L=\alpha^M_L(M)\leq \eta^M_P(M)\leq P.$ By Remark \ref{rk 6.3} (2), we conclude that $r\preceq \eta^M_P$ or $s\preceq \eta^M_P.$ Hence, $\eta^M_P$ is a prime preradical.
\end{proof}

\begin{cor}\label{cor 6.6} Let $M\in\RMod$ and $P\leq M.$ If $P$ is a large prime submodule of $M,$ then $\eta^M_P(M)$ is the largest element in $Spec(\Lambda^{fi}(M))$ which is contained in $P.$ 
\end{cor}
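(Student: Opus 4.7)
The plan is to assemble the conclusion directly from Proposition \ref{pro 6.4} and Proposition \ref{pro 6.5}, which together already carry essentially all of the content. The only thing to check is that the hypotheses of Proposition \ref{pro 6.4} are verified once we know $P$ is a large prime submodule.

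First, I would translate the hypothesis using Proposition \ref{pro 6.5}: since $P$ is a large prime submodule of $M$, the preradical $\eta^M_P$ is a prime preradical in $\Rpr$. Next, I would verify the remaining nontriviality condition needed to invoke Proposition \ref{pro 6.4}, namely $\eta^M_P(M)\neq M$. This follows from the definition of largeness: a large prime element satisfies $P\neq M$ (by Definition \ref{e}), and since $\eta^M_P(M)\leq P$ by the remark immediately following Definition \ref{f}, we obtain $\eta^M_P(M)\leq P\lneq M$.

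Once both hypotheses of Proposition \ref{pro 6.4} are in place, the conclusion of that proposition states precisely that $\eta^M_P(M)$ is an element of $Spec(\Lambda^{fi}(M))$ and, furthermore, that it is the largest element of $Spec(\Lambda^{fi}(M))$ contained in $P$. This is exactly the statement of the corollary, so the proof is complete.

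I do not expect any genuine obstacle here; the corollary is a bookkeeping consequence of the two propositions. The only tiny subtlety is to make sure we do not forget the condition $\eta^M_P(M)\neq M$ when invoking Proposition \ref{pro 6.4}, which is why I would spell out the strict inequality $\eta^M_P(M)\leq P\lneq M$ explicitly.
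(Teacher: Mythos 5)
Your proposal is correct and follows exactly the paper's route: the paper's proof is simply ``It follows from Propositions \ref{pro 6.4} and \ref{pro 6.5}.'' Your only addition is making explicit the check $\eta^M_P(M)\leq P\lneq M$ needed for Proposition \ref{pro 6.4}, which is a reasonable bit of care but not a different argument.
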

\begin{proof} It follows from Propositions \ref{pro 6.4} and \ref{pro 6.5}.
\end{proof}

\begin{dfn}\label{g}
Following \cite{PI}, an $R$-module $M$ is a \emph{prime} module if 
\[0\in{Spec(\Lambda^{fi}(M))}.\]
\end{dfn}

\begin{cor} \label{1}
Let $M\in\RMod$ and $P\leq M.$ Then, the following conditions hold.
\begin{enumerate}

\item If $P$ is a large prime submodule of $M,$  then $M/\eta^M_P(M)$ is a prime module.

\item If $M$ is a quasi-projective module and $M/P$ is a prime module, then $P$ is a large prime submodule.
\end{enumerate}
\end{cor}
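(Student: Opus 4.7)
For part (1), my plan is to invoke Proposition \ref{pro 6.5}, which converts the hypothesis ($P$ large prime) into the statement that $\eta^M_P$ is a prime preradical, and then verify that $M/\eta^M_P(M)$ is a prime module by showing $0$ is a prime element of $\Lambda^{fi}(M/\eta^M_P(M))$. I will take fully invariant submodules $\bar N, \bar L$ of $\bar M := M/\eta^M_P(M)$ with $\bar N_{\bar M}\bar L = 0$ and lift them along $\pi\colon M \to \bar M$ to fully invariant submodules $N, L$ of $M$ containing $\eta^M_P(M)$ (the preimages are fully invariant because $\eta^M_P(M)$ is, being the image of a preradical). The key step is to show $N_M L \leq \eta^M_P(M)$: for each $f \in \Hom(M,L)$, functoriality of $\eta^M_P$ gives $f(\eta^M_P(M)) \subseteq \eta^M_P(L) \subseteq \eta^M_P(M)$, so $f$ descends to some $\bar f\colon \bar M \to \bar L$; the hypothesis then forces $\bar f(\bar N)=0$, equivalently $f(N) \subseteq \eta^M_P(M)$. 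Summing over $f$ yields $N_M L \leq \eta^M_P(M)$, so $(\alpha^M_N\cdot\alpha^M_L)(M) \leq \eta^M_P(M)$, and Remark \ref{rk 6.3}(2) gives $\alpha^M_N\cdot\alpha^M_L \preceq \eta^M_P$. Primality of $\eta^M_P$ then forces $N \leq \eta^M_P(M)$ or $L \leq \eta^M_P(M)$, i.e., $\bar N = 0$ or $\bar L = 0$.

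For part (2), I will take $N, L \in \Lambda^{fi}(M)$ with $N_M L \leq P$, set $\bar M = M/P$, $\bar N = (N+P)/P$, $\bar L = (L+P)/P$, and prove $N \leq P$ or $L \leq P$ using primality of $\bar M$. Quasi-projectivity shows $\bar N, \bar L \in \Lambda^{fi}(\bar M)$: any $\bar f\colon \bar M \to \bar M$ lifts to $f\colon M \to M$, which preserves $N$ and $L$, hence $\bar f$ preserves $\bar N$ and $\bar L$. The crux is to show $\bar N_{\bar M}\bar L = 0$. For any $h\colon \bar M \to \bar L$, I use the second isomorphism theorem $\bar L = (L+P)/P \cong L/(L \cap P)$ to view $h \circ \pi$ as a map $M \to L/(L \cap P)$, and lift it through the epimorphism $L \twoheadrightarrow L/(L \cap P)$ to some $g\colon M \to L$. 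Then $g(N) \subseteq N_M L \leq P$ together with $g(N) \subseteq L$ gives $g(N) \subseteq L \cap P$, whence $h(\bar N) = \pi_L(g(N)) = 0$. Summing yields $\bar N_{\bar M}\bar L = 0$, and primality of $\bar M$ forces $\bar N = 0$ or $\bar L = 0$.

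The main obstacle is the lifting in part (2): a direct application of quasi-projectivity only lifts $\bar M \to \bar M$ to $M \to M$ with image in $L+P$, giving merely $g(N) \subseteq N \cap (L+P)$, which need not be contained in $P$. To land the image inside $L$ itself (and thereby exploit $N_M L \leq P$) I need $M$ to be $L$-projective; this is supplied by the section's standing hypothesis that $M$ is projective in $\sigma[M]$, which yields $X$-projectivity for every $X \in \sigma[M]$. The hypothesis ``quasi-projective'' in the statement is thus consistent with, but actually weaker than, what the argument genuinely needs.
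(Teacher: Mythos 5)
Your argument is correct as it stands in the paper's context, but it takes a genuinely different route. The paper's own proof is essentially by citation: part (1) is obtained by combining Corollary \ref{cor 6.6} (which puts $\eta^M_P(M)$ in $Spec(\Lambda^{fi}(M))$) with \cite[Proposition 18.1]{PI}, and part (2) is disposed of by remarking that the proof of \cite[Proposition 18.2]{PI} never uses full invariance of $P$. You instead re-prove both implications directly: in (1) you pass through Proposition \ref{pro 6.5}, pull fully invariant submodules of $M/\eta^M_P(M)$ back along the quotient (legitimate, since $\eta^M_P(M)$ is fully invariant and $\eta^M_P(L)\leq\eta^M_P(M)$ lets every $f\colon M\to L$ descend), and then use primeness of the preradical via Remark \ref{rk 6.3}(2); this is in effect a self-contained proof of the relevant case of \cite[Proposition 18]{PI} and, as you implicitly exploit, needs no projectivity hypothesis at all. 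The buy is independence from the external reference; the cost appears in (2): your lifting of $h\colon M/P\to (L+P)/P$ through $L\twoheadrightarrow L/(L\cap P)$ needs projectivity of $M$ relative to epimorphisms between objects of $\sigma[M]$, which mere quasi-projectivity (lifting only along quotients $M\to M/K$) does not provide, exactly as you note. Under the section's standing assumption that $M$ is projective in $\sigma[M]$ your proof of (2) is complete and correct; but if the corollary is read as the paper's wording suggests --- with quasi-projectivity as the only projectivity hypothesis, mirroring \cite[Proposition 18.2]{PI} --- then your argument proves a formally weaker statement than the one the paper obtains by citation, and recovering the quasi-projective case would require a different argument (the one in \cite{PI}) rather than your relative-projectivity lifting. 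This hypothesis mismatch in (2) is the only substantive point to flag; everything else checks out.
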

\begin{proof}
(1) It is a consequence from Corollary \ref{cor 6.6} and \cite[Proposition 18.1]{PI}.

(2) Note that in \cite[Proposition 18.2]{PI} is not necessary the hypothesis of $P\in\Lambda^{fi}(M)$.
\end{proof}

\begin{example}
If $\mathcal{M}<{M}$ is a maximal submodule then $M/\mathcal{M}$ is a simple module, hence it is a prime module. Therefore, every maximal submodule of $M$ is a large prime submodule.
\end{example}

\begin{dfn}\label{h}
An $R$-module $M$ is \emph{ FI-simple } if the only fully invariant submodules of $M$ are $0$ and $M$.
\end{dfn}

\begin{prop}
If $M$ is an FI-simple module, then $M$ is cogenerated by all its non-zero factors modules.
\end{prop}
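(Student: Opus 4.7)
The plan is to fix a nonzero factor module $M/K$ (so $K$ is a proper submodule of $M$) and show that the intersection
\[
N := \bigcap\{\ker f \mid f\in\Hom_R(M,M/K)\}
\]
is zero; this is equivalent to saying that $M$ embeds into a product of copies of $M/K$, i.e. that $M$ is cogenerated by $M/K$.

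First I would verify that $N$ is a fully invariant submodule of $M$. Let $g\in\End_R(M)$ and $x\in N$. For any $f\in\Hom_R(M,M/K)$ the composite $f\circ g$ is again an element of $\Hom_R(M,M/K)$, hence $x\in\ker(f\circ g)$, giving $f(g(x))=0$. Varying $f$ yields $g(x)\in N$, so $g(N)\subseteq N$ and $N\in\Lambda^{fi}(M)$. (Note that $N$ is exactly $\eta^{M/K}_{0}(M)$ in the notation of Definition \ref{f}, so the fully invariant character of $N$ could also be read off from the fact that $\eta^{M/K}_{0}$ is a preradical.)

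Next, the FI-simplicity hypothesis forces $N=0$ or $N=M$. The second possibility is excluded: the canonical projection $\pi\colon M\twoheadrightarrow M/K$ belongs to $\Hom_R(M,M/K)$, and if $N=M$ then $M\subseteq\ker\pi=K$, contradicting $K\lneqq M$ (which we have since $M/K\neq0$). Therefore $N=0$.

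The main obstacle is simply spotting the correct submodule to test against FI-simplicity; once the candidate $N$ is chosen, the proof reduces to two trivialities (full invariance via precomposition, and nontriviality of the canonical projection). Since $K$ was an arbitrary proper submodule of $M$, this shows that $M$ is cogenerated by every non-zero factor module of $M$.
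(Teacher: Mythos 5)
Your proof is correct, and it takes a genuinely different route from the paper's. The paper deduces the statement from its spectral machinery: for an FI-simple module every proper submodule is a large prime, $Spec(\Lambda^{fi}(M))=\{0\}$, and Corollary \ref{cor 6.6} then gives $\eta^M_N(M)=0$ for every proper $N\leq M$, which yields an embedding $M\rightarrow (M/N)^{S}$ with $S=\End_R(M)$, the kernel being $\bigcap\{g^{-1}(N)\mid g\in S\}$. You instead test FI-simplicity directly against the reject $\bigcap\{\ker f\mid f\in\Hom_R(M,M/K)\}$, whose full invariance is immediate by precomposition, and embed $M$ into a product of copies of $M/K$ indexed by $\Hom_R(M,M/K)$. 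The two kernels need not coincide in general (yours is contained in $\eta^M_K(M)$, since each $g\in\End_R(M)$ gives $\pi\circ g\in\Hom_R(M,M/K)$ with kernel $g^{-1}(K)$, while a map $M\to M/K$ need not lift to $M$ without projectivity), but both are fully invariant and both arguments finish with the same dichotomy. What your route buys is self-containedness: it uses only the definition of FI-simple and the standard characterization of cogeneration via the reject, with no appeal to $LgSpec(M)$, to Corollary \ref{cor 6.6}, or to the section's standing hypothesis that $M$ is projective in $\sigma[M]$; the paper's route, in exchange, exhibits the proposition as a direct corollary of its spectral theory.
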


\begin{proof}
If $M$ is FI-simple, it is clear that $LgSpec(M)=\{N\leq{M}\mid N\neq{M}\}$ and $Spec(\Lambda^{fi}(M))={0}$. By corollary \ref{cor 6.6}, $\eta^M_N(M)=0$ for all $N\leq{M},$ with $N\neq{M}$. This implies that there exists a monomorphism $M\rightarrow{(M/N)^S}$, where $S=End_R(M)$ for all proper submodule of $M$.  
\end{proof}

\begin{rem}
In \cite{Z} the author writes $Spec^{fp}(M)$ instead of $Spec(\Lambda^{fi}(M))$. If $M$ is projective in $\sigma[M]$ then $LgSpec(M)$ is a topological space by Proposition \ref{t} and Corollary \ref{1}. We can see that $Spec(\Lambda^{fi}(M))$ is a subspace of $LgSpec(M)$. Recall that an $\R-$module is \textit{duo} if $\Lambda(M)=\Lambda^{fi}(M).$ Note that if $M$ is a duo module then $Spec(\Lambda^{fi}(M))=LgSpec(M)=Spec(\Lambda(M))$. We will call $LgSpec(M)$ the \emph{large spectrum }of $M$.
\end{rem}

\begin{dfn}\label{i}
A module $M$ is \emph{coatomic} if every submodule is contained in a maximal submodule of $M$.
\end{dfn}

\begin{example} $\,$
\begin{enumerate}
	\item Every finitely generated and every semisimple module is coatomic.
	\item Semiperfect modules are coatomic.
	\item If $R$ is a left perfect ring every left $R$-module is coatomic. (see \cite{Gun})
\end{enumerate}
\end{example}

Note that in general, it could be that $LgSpec(M)=\emptyset$. If we assume that $M$ is a coatomic module then $LgSpec(M)$ is not the empty set.  

We will denote $Spec(\Lambda(M))$ just by $Spec(M)$.

\begin{prop}
Let $M$ be quasi-projective. Then $Spec(\Lambda^{fi}(M))$ is a dense subspace of $LgSpec(M)$
\end{prop}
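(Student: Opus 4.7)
The plan is to unpack the subspace topology: since by Proposition \ref{t} the open sets of $LgSpec(M)$ are precisely the sets $\mathcal{U}(N)$ with $N\in \Lambda^{fi}(M)$, density amounts to showing that every non-empty $\mathcal{U}(N)$ meets $Spec(\Lambda^{fi}(M))$. The inclusion $Spec(\Lambda^{fi}(M))\subseteq LgSpec(M)$ is a trivial unpacking of definitions (a prime element in the sub-quasi-quantale is in particular prime relative to it), so the whole task reduces to this ``meeting'' statement.

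First I would fix a non-empty open set, which takes the form $\mathcal{U}(N)=\{P\in LgSpec(M)\mid N\not\leq P\}$ for some $N\in \Lambda^{fi}(M)$, and pick some witness $P\in \mathcal{U}(N)$. Thus $P$ is a large prime submodule with $P\lneq M$ and $N\not\leq P$. I would then produce a candidate inside $Spec(\Lambda^{fi}(M))$ by taking $Q:=\eta^{M}_{P}(M)$. By the very definition of $\eta^{M}_{P}$ given after Definition \ref{f} we have $Q\leq P$, and since $P\lneq M$ we get $Q\neq M$. Applying Proposition \ref{pro 6.5}\,$(1)\Rightarrow(2)$ to the large prime submodule $P$ yields that $\eta^{M}_{P}$ is a prime preradical, whence Proposition \ref{pro 6.4} (equivalently Corollary \ref{cor 6.6}) gives $Q\in Spec(\Lambda^{fi}(M))$; this is the step where the quasi-projective hypothesis is really needed, since the interplay between the product in $\Lambda^{fi}(M)$ and the preradical multiplication $\alpha^{M}_{N}\cdot \alpha^{M}_{L}$ used in Proposition \ref{pro 6.5} ultimately relies on Remark \ref{tpro1}.

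Finally I would verify that $Q$ lies in $\mathcal{U}(N)$: if $N\leq Q$, then $N\leq Q\leq P$ would contradict $N\not\leq P$, so $N\not\leq Q$ and hence $Q\in \mathcal{U}(N)\cap Spec(\Lambda^{fi}(M))$. This produces the required point in every non-empty open of $LgSpec(M)$, proving density. The main conceptual obstacle is simply identifying the right element of $Spec(\Lambda^{fi}(M))$ to exhibit, and this is resolved by the ``largest prime fully invariant submodule contained in $P$'' characterization supplied by Corollary \ref{cor 6.6}; everything else is order bookkeeping.
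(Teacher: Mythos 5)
Your proof is correct and takes essentially the same route as the paper: the paper also picks a point $P$ of a non-empty basic open $\mathcal{U}(N)$ and invokes Corollary \ref{cor 6.6} to produce $Q=\eta^{M}_{P}(M)\in Spec(\Lambda^{fi}(M))$ with $Q\leq P$, only phrasing the conclusion as a contradiction. In fact your version is slightly more complete, since you make explicit the final order argument ($N\not\leq P$ and $Q\leq P$ force $N\not\leq Q$, so $Q\in\mathcal{U}(N)$) that the paper leaves implicit.
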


\begin{proof}
Let  $\mathcal{U}(N)\neq\emptyset$ be an open set of $LgSpec(M)$ such that 
\[\mathcal{U}(N)\cap Spec(\Lambda^{fi}(M))=\emptyset.\]
 Thus, the elements of $\mathcal{U}(N)$ are not fully invariant. Let $P\in\mathcal{U}(N)$. By Corollary \ref{cor 6.6}, there exists $Q\in{Spec(\Lambda^{fi}(M))}$ such that $Q\subseteq{P}$, which is a contradiction. Thus $\mathcal{U}(N)=\emptyset$. So, $Spec(\Lambda^{fi}(M))$ is dense in $LgSpec(M)$.
\end{proof}

Let $\mathcal{O}(LgSpec(M))$ be the frame of open subsets of $LgSpec(M)$. Then we have a morphism of $\bigvee$-semilattices
\[\mathcal{U}\colon\Lambda^{fi}(M)\rightarrow\mathcal{O}(LgSpec(M))\]
given by $\mathcal{U}(N)=\{P\in{LgSpec(M)}\mid N\nsubseteq{P}\}$. 

This morphism has a right adjunct $\mathcal{U}_*\colon\mathcal{O}(LgSpec(M))\rightarrow{\Lambda^{fi}(M)}$ given by
\[\mathcal{U}_*(A)=\sum\{K\in{\Lambda^{fi}(M)}\mid\mathcal{U}(K)\subseteq{A}\}.\]

\begin{prop}\label{2} 
Let $N\in{\Lambda^{fi}(M)}$. Then $(\mathcal{U}_*\circ\mathcal{U})(N)$ is the largest fully invariant submodule of $M$ contained in $\displaystyle\bigcap_{P\in\mathcal{V}(N)}{P}.$
\end{prop}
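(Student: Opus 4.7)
The plan is to view this as the module-theoretic instance of Proposition \ref{41}, applied to the subquasi-quantale $B=\Lambda^{fi}(M)$ of the quasi-quantale $A=\Lambda(M)$ (using Proposition \ref{pro1}). In that general setting $\mu=\mathcal{U}_*\circ\mathcal{U}$ is already shown to pick out the largest element of $B$ below $\bigwedge\{P\in Spec_B(A)\mid P\in\mathcal{V}(b)\}$; here, however, the ambient spectrum is the large spectrum $LgSpec(M)=Spec_{\Lambda^{fi}(M)}(\Lambda(M))$, and we want the statement in terms of the intersection taken over all $P\in\mathcal{V}(N)\subseteq LgSpec(M)$, not just the fully invariant primes, so I would re-run the short argument directly.

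First I would unwind definitions: for $N\in\Lambda^{fi}(M)$,
\[
(\mathcal{U}_*\circ\mathcal{U})(N)=\sum\{K\in\Lambda^{fi}(M)\mid\mathcal{U}(K)\subseteq\mathcal{U}(N)\},
\]
and observe that $\mathcal{U}(K)\subseteq\mathcal{U}(N)$ is equivalent, by taking complements, to $\mathcal{V}(N)\subseteq\mathcal{V}(K)$, i.e.\ to the condition that every $P\in LgSpec(M)$ with $N\subseteq P$ also satisfies $K\subseteq P$. Thus
\[
(\mathcal{U}_*\circ\mathcal{U})(N)=\sum\Bigl\{K\in\Lambda^{fi}(M)\;\Big|\;K\subseteq\bigcap_{P\in\mathcal{V}(N)}P\Bigr\}.
\]
Since a sum of fully invariant submodules is fully invariant, this sum lies in $\Lambda^{fi}(M)$, and it is clearly contained in $\bigcap_{P\in\mathcal{V}(N)}P$.

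To finish, I would verify maximality: if $L\in\Lambda^{fi}(M)$ and $L\subseteq\bigcap_{P\in\mathcal{V}(N)}P$, then $L\subseteq P$ for each $P\in\mathcal{V}(N)$, hence $\mathcal{V}(N)\subseteq\mathcal{V}(L)$, hence $\mathcal{U}(L)\subseteq\mathcal{U}(N)$, so $L$ is one of the summands defining $(\mathcal{U}_*\circ\mathcal{U})(N)$, giving $L\subseteq(\mathcal{U}_*\circ\mathcal{U})(N)$.

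The argument is essentially a transcription of Proposition \ref{41}; the only subtle point — and the reason I would not simply quote that proposition verbatim — is the bookkeeping distinction between $Spec(\Lambda^{fi}(M))$ (the fully invariant primes) and the large spectrum $LgSpec(M)$ used to define $\mathcal{U}$ here. Making sure the intersection is indexed by the correct set of primes and that the sum really stays inside $\Lambda^{fi}(M)$ is the only place one must be careful.
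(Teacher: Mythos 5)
Your argument is correct and is essentially the paper's: the paper proves this statement simply by invoking Proposition \ref{41}, whose proof is exactly the unwinding you carry out, specialized to $A=\Lambda(M)$, $B=\Lambda^{fi}(M)$ (with closure of $\Lambda^{fi}(M)$ under sums giving that the supremum stays in $B$). The one caveat you raise is actually unnecessary: in Proposition \ref{41} the infimum is taken over $Spec_B(A)$, which here is by definition $Spec_{\Lambda^{fi}(M)}(\Lambda(M))=LgSpec(M)$, so the intersection is already indexed by the large primes in $\mathcal{V}(N)$ and the general proposition applies verbatim.
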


\begin{proof}
It follows by Proposition \ref{41}.
\end{proof}

\begin{dfn}\label{sp}
Let $M$ be an $R$-module and $M\neq{N}\in\Lambda^{fi}(M)$. We say that $N$ is {\em semiprime in $M$} if whenever $K_MK\leq{N}$ with $K\in\Lambda^{fi}(M)$ then $K\leq{N}$. We say that $M$ is a {\em semiprime} module if $0$ is semiprime.
\end{dfn}

\begin{rem}\label{4.23}
The last definition is given in \cite{S}. Note that if $\{P_i\}_I$ is a family of large primes submodules of $M$, then $\displaystyle\bigcap_I{P_i}$ is not necessary a fully invariant submodule of $M$ but $\displaystyle\bigcap_I{P_i}$ satisfies the property that for every $L\in\Lambda^{fi}(M)$ such that $\displaystyle L_ML\leq\bigcap_I{P_i}$ then $\displaystyle L\leq\bigcap_I{P_i}$.
\end{rem}

\begin{prop}\label{3}
Let $\mu=\mathcal{U}_*\circ\mathcal{U}$ be as in Proposition \ref{2}. Let $N\in\Lambda^{fi}(M)$ then, $\mu(N)=N$ if and only if $N$ is semiprime in $M$ or $N=M$. 
\end{prop}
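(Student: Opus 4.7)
The strategy is to exploit that $\mu$ is a closure operator (so $\mu(N)=N$ amounts to $\mu(N)\leq N$, the other inequality being automatic) together with Theorem~\ref{3.15}, which states that $\mu$ is a multiplicative pre-nucleus. The implication $\mu(N)=N\Rightarrow N$ is semiprime or $N=M$ then becomes a one-line calculation: given $K\in\Lambda^{fi}(M)$ with $K_M K\leq N$, multiplicativity yields $\mu(K)=\mu(K)\wedge\mu(K)=\mu(K_M K)\leq\mu(N)=N$, so $K\leq\mu(K)\leq N$.

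For the converse, dispose of $N=M$ trivially (since large primes are proper, $\mathcal{V}(M)=\emptyset$ and Proposition~\ref{2} gives $\mu(M)=M$), and assume $N$ semiprime with $N\neq M$. Since $\mu(N)\in\Lambda^{fi}(M)$, the inequality $\mu(N)\leq N$ reduces by contrapositive to the following \emph{separation principle}: for every $L\in\Lambda^{fi}(M)$ with $L\not\leq N$, there exists a large prime $P\in LgSpec(M)$ with $N\leq P$ and $L\not\leq P$. (Applied to $L=\mu(N)$, this would contradict the fact from Proposition~\ref{2} that $\mu(N)\leq\bigcap\{Q\in LgSpec(M)\mid N\leq Q\}$.)

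To produce such a $P$, first iterate semiprimeness to build a sequence $(l_n)_{n\geq 0}$ of witnesses in $L\setminus N$: pick $l_0\in L\setminus N$ and, inductively, let $T_n$ be the fully invariant submodule of $M$ generated by $l_n$. Then $T_n\not\leq N$, so semiprimeness gives $T_n{}_M T_n\not\leq N$, and we choose $l_{n+1}\in T_n{}_M T_n\setminus N$; note $T_{n+1}\leq T_n{}_M T_n\leq T_n$. Now apply Zorn's lemma inside $\Lambda^{fi}(M)$ to find $P$ maximal among fully invariant submodules of $M$ containing $N$ and avoiding every $l_n$; the family is nonempty ($N$ works) and chain-closed since each avoidance condition involves only a single element and fully invariance is preserved by directed unions.

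The main obstacle is to verify that $P$ is a large prime. Suppose, for contradiction, that $A,B\in\Lambda^{fi}(M)$ satisfy $A_M B\leq P$ but $A,B\not\leq P$. Then $P+A$ and $P+B$ are fully invariant submodules strictly containing $P$ (and $N$), so maximality forces $l_{n_A}\in P+A$ and $l_{n_B}\in P+B$ for some indices. Setting $n=\max(n_A,n_B)$, the decreasing sequence $(T_k)$ and the fact that the fully invariant submodule $P+A$ absorbs the fully invariant closure of $l_{n_A}$ give $T_n\leq T_{n_A}\leq P+A$, and similarly $T_n\leq P+B$; hence $l_{n+1}\in T_n{}_M T_n\leq(P+A)_M(P+B)$. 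Because $M$ is projective in $\sigma[M]$, the product $-_M-$ distributes over finite sums in each argument (cf.\ Remark~\ref{tpro}(7) and the projective-lifting argument behind Remark~\ref{tpro1}(2)), so
\[(P+A)_M(P+B)=P_M P+P_M B+A_M P+A_M B.\]
Each of the first three summands lies in $P$, since full invariance of $P$ forces $f(P)\leq P$ for every $f:M\to M$, and hence $P_M X\leq P$ for any $X\leq M$; the fourth lies in $P$ by hypothesis. Thus $l_{n+1}\in P$, contradicting the Zorn construction. Therefore $P$ is a large prime, and $l_0\in L\setminus P$ gives $L\not\leq P$, completing the separation principle and the proof.
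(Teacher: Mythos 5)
Your proof is correct, but it takes a genuinely different route from the paper in the harder direction. For $\mu(N)=N\Rightarrow$ semiprime, the paper argues via Remark \ref{4.23} (an intersection of large primes satisfies the semiprime condition) together with Proposition \ref{2}, while you invoke multiplicativity of $\mu$ from Theorem \ref{3.15} (the content of Corollary \ref{42}); both are valid and your computation is arguably shorter. For the converse, the paper simply quotes \cite[Proposition 1.12]{Gold}, which says a semiprime fully invariant submodule is the intersection of the members of $Spec(\Lambda^{fi}(M))$ above it, and then uses $Spec(\Lambda^{fi}(M))\subseteq LgSpec(M)$ to force $\bigcap_{P\in\mathcal{V}(N)}P\leq N$, hence $\mu(N)\leq N$. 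You instead prove a separation principle from scratch by a Levitzki/Krull-type argument: an $m$-sequence $(l_n)$ built from semiprimeness, Zorn's lemma on fully invariant submodules avoiding all $l_n$, and a verification that the maximal element is a large prime. In effect you reprove the relevant case of \cite[Proposition 1.12]{Gold} (indeed your $P$ is fully invariant, so it even lies in $Spec(\Lambda^{fi}(M))$), which makes your proof self-contained and independent of that (unpublished) reference, at the cost of being longer. One step deserves emphasis: the identity $(P+A)_M(P+B)=P_MP+P_MB+A_MP+A_MB$ needs right distributivity of $-_M-$ over a \emph{finite, non-directed} sum, which is not literally what Remark \ref{tpro1}(2) states; as you indicate, it does follow from the same projective-lifting argument (a map $M\to X+Y$ lifts through $X\oplus Y\twoheadrightarrow X+Y$ since $M$ is projective in $\sigma[M]$), and the standing projectivity hypothesis is in force for Proposition \ref{3}, so this is legitimate, but it should be spelled out rather than cited. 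Also note that $P_MX\leq P$ uses full invariance of $P$ exactly as you say, and $A_MP\leq P$ holds trivially since the product lands in its right factor; with these, your contradiction $l_{n+1}\in P$ is sound, and the application to $L=\mu(N)$ via Proposition \ref{2} (or \ref{41}) closes the argument.
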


\begin{proof}
Assume that $\mu(N)=N$ and let $L\in\Lambda^{fi}(M)$ such that $L_{M}L\leq{N}.$ We have that $\mu(N)\leq\displaystyle{\bigcap_{P\in\mathcal{V}(N)}}{P}$, so $\displaystyle L_ML\leq\bigcap_{P\in\mathcal{V}(N)}P$. By Remark \ref{4.23}, $\displaystyle L\leq\bigcap_{P\in\mathcal{V}(N)}P$. Since $L$ is a fully invariant submodule of $M$, by Proposition \ref{2} $L\leq\mu(N)$. Thus $\mu(N)$ is semiprime in $M$. 

Now suppose that $N$ is semiprime in $M$. By \cite[Proposition 1.12]{Gold}, \[N=\bigcap\{Q\mid N\leq{Q}\;Q\in{Spec(\Lambda^{fi}(M))}\}.\]
 Since $Spec(\Lambda^{fi}(M)){\subseteq}{LgSpec(M)},$
  then 
  \[\bigcap_{P\in\mathcal{V}(N)}P\subseteq\bigcap\{Q\in{Spec(\Lambda^{fi}(M))}\mid N\leq{Q} \}=N.\] This implies that $N=\mu(N)$.
\end{proof}



\begin{cor}\label{42}
The closure operator $\mu\colon\Lambda^{fi}(M)\rightarrow\Lambda^{fi}(M)$ is a multiplicative pre-nucleus.
\end{cor}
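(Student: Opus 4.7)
The plan is to deduce this corollary as a direct instance of Theorem \ref{3.15}. That general theorem asserts that for any subquasi-quantale $B$ of a quasi-quantale $A$ satisfying $0,1\in B$ and $1b,b1\leq b$ for all $b\in B$, the closure operator $\mu=\mathcal{U}_{*}\circ\mathcal{U}$ coming from the adjunction with $\mathcal{O}(Spec_{B}(A))$ is multiplicative and a pre-nucleus. So my task reduces to identifying the correct instance and verifying the hypotheses.

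First I would set $A=\Lambda(M)$ and $B=\Lambda^{fi}(M)$, both equipped with the product $-_{M}-$. Since $M$ is projective in $\sigma[M]$, Proposition \ref{pro1}(1) gives that $A$ is a quasi-quantale, and Proposition \ref{pro1}(2) gives that $B$ is a (right-unital) subquasi-quantale of $A$. The bottom $0$ and the top $M$ of $\Lambda(M)$ both lie in $\Lambda^{fi}(M)$, so the condition $0,1\in B$ is immediate. For the absorption inequality $1\cdot b, b\cdot 1\leq b$, I would argue: for every $N\in \Lambda^{fi}(M)$ we have $M_{M}N\leq N$ by Remark \ref{tpro}(3), and $N_{M}M=N$ by Proposition \ref{pro1}(2), giving $N_{M}M\leq N$ trivially.

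Next I would observe that the $\mu$ defined in Section \ref{sec4} via the adjunction $\mathcal{U}\dashv \mathcal{U}_{*}$ between $\Lambda^{fi}(M)$ and $\mathcal{O}(LgSpec(M))$ is exactly the instantiation of the abstract $\mu$ of Section \ref{sec:sec3} at $A=\Lambda(M)$, $B=\Lambda^{fi}(M)$: indeed $LgSpec(M)=Spec_{\Lambda^{fi}(M)}(\Lambda(M))$ by definition, and the maps $\mathcal{U}$ and $\mathcal{U}_{*}$ are defined by the same formulas in both sections. With the hypotheses of Theorem \ref{3.15} verified, its conclusion applies verbatim to yield that $\mu\colon \Lambda^{fi}(M)\to \Lambda^{fi}(M)$ is a multiplicative pre-nucleus.

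There is essentially no obstacle here beyond the bookkeeping above: the content of the corollary is already contained in Theorem \ref{3.15}, and all that must be checked is that the module-theoretic setting falls into the abstract framework. The only mildly delicate point is the use of the projectivity of $M$ in $\sigma[M]$, which is what makes the product $-_{M}-$ associative and makes $\Lambda(M)$ a quasi-quantale in the first place; this is precisely the standing assumption of Section \ref{sec4}.
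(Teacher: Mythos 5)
Your proposal is correct and coincides with the paper's own argument: the paper proves Corollary \ref{42} simply by invoking Theorem \ref{3.15}, with the hypotheses ($0,M\in\Lambda^{fi}(M)$, the absorption inequalities via Remark \ref{tpro}(3) and Proposition \ref{pro1}(2), and projectivity of $M$ in $\sigma[M]$) supplied by the standing assumptions of Section \ref{sec4}. Your write-up merely makes this instantiation explicit, which is exactly the intended reading.
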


\begin{proof}
By Theorem \ref{3.15}.
\end{proof}

\begin{rem}\label{5}
By definition $\mu$ is a closure operator and by Corollary \ref{42} $\mu$ is a pre-nucleus then $\mu$ is a nucleus in $\Lambda^{fi}(M).$
\end{rem}

\begin{prop}\label{7}
Let $M\in{\RMod}$ and 
\[SP(M)=\{N\in{\Lambda^{fi}(M)}\mid N\text{ is semiprime }\}\cup\{M\}.\]
Then $SP(M)$ is a frame. Moreover, $SP(M)\cong \mathcal{O}(LgSpec(M))$ canonically as frames.
\end{prop}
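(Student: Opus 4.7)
The plan is to identify $SP(M)$ with the fixed-point set of the nucleus $\mu = \mathcal{U}_{*}\circ\mathcal{U}$ and then invoke Corollary \ref{05a} together with the adjunction $\mathcal{U}\dashv\mathcal{U}_{*}$. First, Proposition \ref{3} combined with the definition of $SP(M)$ says precisely that $SP(M)$ coincides with $\Lambda^{fi}(M)_{\mu}$, the set of $\mu$-fixed elements; recall from Corollary \ref{42} and Remark \ref{5} that $\mu$ is a multiplicative nucleus on the quasi-quantale $\Lambda^{fi}(M)$.

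To show that $SP(M)$ is a frame, I would apply Corollary \ref{05a} with $A=\Lambda^{fi}(M)$ and $d=\mu$. This requires the arbitrary left distributivity $\bigl(\bigvee X\bigr)a=\bigvee\{xa\mid x\in X\}$, which is not automatically granted by the quasi-quantale axioms. However, Proposition \ref{pro1}(1) provides the identity $\bigl(\sum_i N_i\bigr)_{M}L=\sum_i (N_i)_{M}L$ in $\Lambda(M)$ for arbitrary families, and by Remark \ref{tpro1}(3) sums and products of fully invariant submodules remain fully invariant. Hence the identity restricts to $\Lambda^{fi}(M)$, and Corollary \ref{05a} then yields that $SP(M)=\Lambda^{fi}(M)_{\mu}$ is a frame.

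For the canonical isomorphism, I would exploit that the $\bigvee$-morphism $\mathcal{U}\colon\Lambda^{fi}(M)\to\mathcal{O}(LgSpec(M))$ is surjective: every open set of $LgSpec(M)$ is a union of basic opens $\mathcal{U}(N_i)$ with $N_i\in\Lambda^{fi}(M)$, and $\bigcup_i\mathcal{U}(N_i)=\mathcal{U}\bigl(\sum_i N_i\bigr)$ because $\mathcal{U}$ preserves joins. Surjectivity of $\mathcal{U}$, together with the general Galois identity $\mathcal{U}\mathcal{U}_{*}\mathcal{U}=\mathcal{U}$, forces $\mathcal{U}\mathcal{U}_{*}=\mathrm{id}_{\mathcal{O}(LgSpec(M))}$. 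On the other side, for $N\in SP(M)$ one has $\mathcal{U}_{*}\mathcal{U}(N)=\mu(N)=N$. Therefore $\mathcal{U}$ and $\mathcal{U}_{*}$ restrict to mutually inverse, order-preserving bijections between $SP(M)$ and $\mathcal{O}(LgSpec(M))$, and any order isomorphism between two complete lattices that are frames is automatically a frame isomorphism.

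The main point of friction in this plan is the verification that the arbitrary-suprema distributive law passes from $\Lambda(M)$ down to the subquasi-quantale $\Lambda^{fi}(M)$, which is what allows Corollary \ref{05a} to apply; once this is secured, both the frame structure on $SP(M)$ and the canonical identification with $\mathcal{O}(LgSpec(M))$ fall out of the formal machinery of the $\mathcal{U}\dashv\mathcal{U}_{*}$ adjunction.
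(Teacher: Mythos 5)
Your proposal is correct and follows essentially the same route as the paper: identify $SP(M)$ with the fixed-point set $\Lambda^{fi}(M)_\mu$ via Proposition \ref{3} and apply Corollary \ref{05a}, using that $\mu$ is a multiplicative nucleus (Corollary \ref{42}, Remark \ref{5}). You in fact supply two details the paper's two-line proof leaves implicit --- the verification that the arbitrary-join distributivity needed for Corollary \ref{05a} restricts from $\Lambda(M)$ to $\Lambda^{fi}(M)$, and the explicit argument that $\mathcal{U}$ and $\mathcal{U}_*$ restrict to mutually inverse maps giving $SP(M)\cong\mathcal{O}(LgSpec(M))$ --- both of which are sound.
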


\begin{proof}
By Proposition \ref{3}, $\Lambda^{fi}(M)_\mu=SP(M)$. Since $\mu$ is a multiplicative nucleus then $SP(M)$ is a frame by Corollary \ref{05a}.
\end{proof}

\begin{dfn}\label{j}
Let $L$ be a lattice. An element $1\neq{p}\in{L}$ is called $\wedge$\emph{-irreducible} if whenever $x\wedge{y}\leq{p}$ for any $x,y\in{L}$ then $x\leq{p}$ or $y\leq{p}$.
\end{dfn}

Given a frame $F,$ {\em its points} is the set 
\[pt(F)=\{p\in{F}\mid p\text{ is}\wedge\text{-irreducible}\}\]

\begin{prop}\label{77}
Let $M$ be an $R$-module. Then $pt(SP(M))=Spec(\Lambda^{fi}(M))$
\end{prop}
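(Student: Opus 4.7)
The plan is to prove both inclusions separately, leveraging the frame-theoretic description $SP(M) = \Lambda^{fi}(M)_\mu$ from Proposition \ref{7} and the multiplicativity of $\mu$ from Corollary \ref{42}.

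For the inclusion $Spec(\Lambda^{fi}(M)) \subseteq pt(SP(M))$, I start with $P$ prime in the quasi-quantale $\Lambda^{fi}(M)$. First I note $P$ must be semiprime: if $K_M K \leq P$ with $K \in \Lambda^{fi}(M)$, then by primeness $K \leq P$, so $P \in SP(M)$. Next I need to show $P$ is $\wedge$-irreducible in $SP(M)$. Given $X, Y \in SP(M) \subseteq \Lambda^{fi}(M)$ with $X \wedge Y \leq P$, I want to invoke primeness on the product $X_M Y$. The key observation is that $\Lambda^{fi}(M)$ is a right-unital subquasi-quantale of $\Lambda(M)$ (Proposition \ref{pro1}) and satisfies $M_M b \leq b$ and $b_M M = b$ for all $b \in \Lambda^{fi}(M)$ (by Remark \ref{tpro}(3) and Proposition \ref{pro1}(2)), so Lemma \ref{sc}(1) applies and yields $X_M Y \leq X \wedge Y \leq P$. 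Primeness of $P$ then forces $X \leq P$ or $Y \leq P$.

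For the reverse inclusion $pt(SP(M)) \subseteq Spec(\Lambda^{fi}(M))$, I take $P \in pt(SP(M))$, so $P$ is semiprime, $P \neq M$, and $\wedge$-irreducible in $SP(M)$. Assume $N, L \in \Lambda^{fi}(M)$ with $N_M L \leq P$. Since $\mu$ is monotone and a closure operator, and $P = \mu(P)$ by Proposition \ref{3}, I obtain $\mu(N_M L) \leq \mu(P) = P$. Now I apply multiplicativity of $\mu$ (Corollary \ref{42}) to conclude $\mu(N) \wedge \mu(L) = \mu(N_M L) \leq P$. Since $\mu(N), \mu(L)$ lie in $SP(M)$ (being fixed points of $\mu$), the $\wedge$-irreducibility of $P$ in the frame $SP(M)$ gives $\mu(N) \leq P$ or $\mu(L) \leq P$. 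Since $N \leq \mu(N)$ and $L \leq \mu(L)$, this yields $N \leq P$ or $L \leq P$, as required.

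The argument is mostly a packaging of earlier results; the main thing to get right is keeping the two products straight (the meet $\wedge$ in the frame $SP(M)$ versus the module product $-_M-$ in the quasi-quantale $\Lambda^{fi}(M)$), and checking that the hypotheses of Lemma \ref{sc}(1) and Corollary \ref{42} genuinely apply to $\Lambda^{fi}(M)$ when $M$ is projective in $\sigma[M]$. I expect no serious obstacles beyond this bookkeeping, since the correspondence between semiprime submodules and $\mu$-closed elements (Proposition \ref{3}) has already done the heavy lifting.
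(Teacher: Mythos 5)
Your proof is correct and follows essentially the same route as the paper: the nontrivial inclusion $pt(SP(M))\subseteq Spec(\Lambda^{fi}(M))$ is handled identically, via $\mu(N)\wedge\mu(L)=\mu(N_ML)\leq\mu(P)=P$ together with $\wedge$-irreducibility of $P$ in $SP(M)$. The only difference is that the paper dismisses the inclusion $Spec(\Lambda^{fi}(M))\subseteq pt(SP(M))$ as clear, whereas you spell it out correctly using $X_MY\leq X\wedge Y$ from Lemma \ref{sc}(1).
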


\begin{proof}
It is clear that $Spec(\Lambda^{fi}(M))\subseteq{pt(SP(M))}$. Now, let $P\in{pt(SP(M))}$ and $N,L\in{\Lambda^{fi}(M)}$ such that $N_ML\leq{P}$. By Proposition \ref{42},
\[\mu(N)\cap\mu(L)=\mu(N\cap{L})=\mu(N_ML)\leq\mu(P)=P.\]
Since $\mu(N),\mu(L)\in{SP(M)}$ then $N\leq\mu(N)\leq{P}$ or $L\leq\mu(L)\leq{P}$.
\end{proof}

\begin{rem}
Since $pt(SP(M))=Spec(\Lambda^{fi}(M)),$ there exists  a continuous function 
\[\eta\colon LgSpec(M)\rightarrow{Spec(\Lambda^{fi}(M))}\]

\noindent defined as $\eta(P)=\underset{SP}{\sum}\{N\in{SP(M)}\mid P\in\mathcal{V}(N)\}$, where $\underset{SP}{\sum}$ denotes the suprema in the frame $SP(M)$.
Also, we have a frame isomorphism between $SP(M)$ and $\mathcal{O}(Spec(\Lambda^{fi}(M)))$. See \cite{H5}
\end{rem}

\begin{dfn}\label{k}
Let $F$ be a frame. It is said that $F$ is \emph{spatial} if  it is isomorphic to $\mathcal{O}(X)$ for some topological space $X$.  If each  quotient frame of $F$ is spatial, it is said that $F$ is \emph{totally spatial}.
\end{dfn}

\begin{thm}\label{21}
Let $M$ be projective in $\sigma[M]$. Suppose for every fully invariant submodule $N\leq{M}$, the factor module $M/N$ has finite uniform dimension. Then, the frame $SP(M)$ is totally spatial.
\end{thm}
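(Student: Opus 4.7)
My plan is to deduce total spatiality from a finite prime decomposition of every semiprime, and then to transfer spatiality to an arbitrary quotient frame by a Zorn-type argument.

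The first step is to show that under the hypothesis every $N\in SP(M)$ is a finite intersection of elements of $Spec(\Lambda^{fi}(M))$. Given $N$ semiprime and any finite collection $P_1,\dots,P_n$ of distinct minimal primes over $N$ in $\Lambda^{fi}(M)$, I put $Q_i=\bigcap_{l\neq i}P_l$. Minimality of the $P_l$ together with primeness of each $P_i$ forces $Q_i\not\leq P_i$, so $\overline{Q_i}\subseteq M/N$ is nonzero; the identities $\overline{Q_i}\cap\overline{P_i}=\overline{Q_i\cap P_i}=\overline{N}=\overline{0}$ and $\sum_{l\neq i}\overline{Q_l}\subseteq\overline{P_i}$ show that the $\overline{Q_i}$ are in direct sum. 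Finite uniform dimension of $M/N$ therefore bounds $n$, so there are only finitely many minimal primes over $N$, and by semiprimality $N$ is their intersection.

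Now let $j$ be any nucleus on $SP(M)$ and set $F=SP(M)_j$. To show $F$ is spatial I take $N\in F$ with $N<M$ and $x\in F$ with $x\not\leq N$, and construct a prime of $F$ above $N$ but missing $x$. Writing $N=P_1\cap\cdots\cap P_k$ from the first step and using that a nucleus preserves finite meets,
\[
N=j(N)=j(P_1)\cap\cdots\cap j(P_k),
\]
with each $j(P_i)\in F$; since $x\not\leq N$ some index $i_0$ gives $x\not\leq j(P_{i_0})$. I then apply Zorn's lemma to
\[
\mathcal{F}=\{\,c\in F\mid j(P_{i_0})\leq c,\ x\not\leq c\,\}.
\]
A maximal element $p\in\mathcal{F}$ is prime in $F$ by the classical distributivity argument: if $a,b\in F$ satisfied $a\wedge b\leq p$ with $a,b\not\leq p$, then maximality would force $x\leq p\vee^{F}a$ and $x\leq p\vee^{F}b$, while distributivity of $F$ gives
\[
x\leq (p\vee^{F}a)\wedge(p\vee^{F}b)=p\vee^{F}(a\wedge b)=p,
\]
contradicting $p\in\mathcal{F}$.

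The principal obstacle is the chain condition for Zorn: for an ascending chain $\{c_\alpha\}\subseteq\mathcal{F}$ one must verify that the supremum $\bigvee^{F}c_\alpha=j(\mu(\sum c_\alpha))$ still avoids $x$. This is where the finite uniform dimension hypothesis must be invoked a second time: writing $x$ itself as a finite meet of primes via the first step reduces the question to whether finitely many prescribed primes of $\Lambda^{fi}(M)$ can all contain the $j$-closure of the directed union, and the explicit description of $\mu$ from Section \ref{sec:sec3} supplies the compactness substitute needed to rule this out.
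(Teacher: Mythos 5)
There is a genuine gap, and it sits exactly where you yourself flag it. Your second step never actually establishes total spatiality: the Zorn argument needs the chain condition that a chain $\{c_\alpha\}\subseteq\mathcal{F}$ has its supremum $\bigvee^{F}c_\alpha=j\bigl(\mu\bigl(\sum c_\alpha\bigr)\bigr)$ still avoiding $x$, and this is precisely the point where arbitrary frames fail to be spatial. Your proposed fix --- writing $x$ as a finite meet of primes and appealing to ``the explicit description of $\mu$'' as a ``compactness substitute'' --- is not an argument: a representation of $x$ as a meet $Q_1\wedge\cdots\wedge Q_m$ gives no compactness-type property of $x$ (the condition $x\leq c$ does not reduce to conditions on the individual $Q_i$), and nothing about $\mu$ or $j$ prevents the closure of a directed union from jumping above $x$. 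If your Zorn argument worked as stated, it would prove that every frame in which each element is a finite meet of primes is totally spatial, with no further input; but that implication is the real content here, and it is exactly what the paper outsources to Niefield--Rosenthal \cite[Theorem 3.4]{Spa}, which requires the finite prime decomposition to be taken \emph{irredundant} (a condition you never mention) and whose proof is not a routine maximality argument. So the decisive step of the theorem is missing from your proposal.

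Two smaller points. First, your direct-sum bound on the number of minimal primes is circular as written: the identity $\overline{Q_i}\cap\overline{P_i}=\overline{Q_i\cap P_i}=\overline{N}$ assumes $P_1\cap\cdots\cap P_n=N$ for the chosen finite family of minimal primes, which is part of what is being proved; semiprimality only gives that $N$ is the intersection of \emph{all} primes above it. The paper avoids this by citing \cite[Proposition 1.29]{Gold} for ``finitely many minimal primes with intersection $N$'' (together with projectivity of $M/N$ in $\sigma[M/N]$ and the fact that $M/N$ is semiprime, via \cite{V} and \cite{S}), and also needs \cite[Proposition 18]{PI} to lift the primes $P_i/N$ of $\Lambda^{fi}(M/N)$ to primes $P_i$ of $\Lambda^{fi}(M)$ --- a lifting step absent from your sketch. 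Second, your observation that maximal elements of $\mathcal{F}$ are prime is fine, but it only becomes useful once the chain condition is secured, which, as explained above, is the heart of the matter.
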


\begin{proof}
Let $N\in{SP(M)}$. By \cite[Lemma 9]{V} $M/N$ is projective in $\sigma[M/N]$. Since $N$ is semiprime in $M$ then $M/N$ is a semiprime module (Definition \ref{sp}) by \cite[Proposition 13]{S}. By hypothesis $M/N$ has finite uniform dimension, so by \cite[Proposition 1.29]{Gold} ${Spec(\Lambda^{fi}(M/N))}$ has finitely many minimal elements $(P_1/N),...,(P_n/N)$ such that $0=P_1/N\cap...\cap{P_n/N}$. 

Since $P_i/N\in{Spec(\Lambda^{fi}(M/N))}$ then $M/P_i\cong\frac{M/N}{P_i/N}$ is a prime module. Thus, by \cite[Proposition 18]{PI},  $P_i\in{Spec(\Lambda^{fi}(M))}$ for all $1\leq{i}\leq{n}$. Moreover, $N=P_1\cap...\cap{P_n}.$ Because of  this intersection is finite, we can assume that it is irredundant. Thus by \cite[Theorem 3.4]{Spa}, $SP(M)$ is totally spatial.
\end{proof}

For the definition of a \emph{weakly scattered space}, see \cite{H5} and  \cite[8.1]{Pi}.

\begin{cor}\label{22}
Let $M$ be projective in $\sigma[M]$. Suppose for every fully invariant submodule $N\leq{M}$, the factor module $M/N$ has finite uniform dimension. Then the topological space $LgSpec(M)$ is weakly scattered.
\end{cor}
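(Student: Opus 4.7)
The plan is to leverage the two facts that have just been established: Theorem \ref{21} gives total spatiality of the frame $SP(M)$ under exactly the hypotheses imposed in the corollary, and Proposition \ref{7} provides a canonical frame isomorphism $SP(M)\cong \mathcal{O}(LgSpec(M))$. Since weak scatteredness of a topological space $X$ is, by the definition used in \cite{H5} and \cite[8.1]{Pi}, precisely the statement that its frame of opens $\mathcal{O}(X)$ is totally spatial, combining these two observations should give the corollary immediately.

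Concretely, I would proceed in three short steps. First, invoke Theorem \ref{21} with the standing hypotheses (projectivity of $M$ in $\sigma[M]$ plus finite uniform dimension of every $M/N$ for $N\in \Lambda^{fi}(M)$) to obtain that $SP(M)$ is totally spatial. Second, recall from Proposition \ref{7} the frame isomorphism $SP(M)\cong \mathcal{O}(LgSpec(M))$; since total spatiality is preserved by frame isomorphism (a spatial quotient frame transports across the isomorphism to a spatial quotient on the other side), conclude that $\mathcal{O}(LgSpec(M))$ is totally spatial. Third, translate this back through the definition of weakly scattered: $LgSpec(M)$ is weakly scattered exactly when $\mathcal{O}(LgSpec(M))$ is totally spatial.

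There is essentially no obstacle here beyond unpacking definitions and citing the right pointers; the real content has already been done in Theorem \ref{21} and Proposition \ref{7}. The only thing worth a brief comment in the write-up is why total spatiality transfers across the frame isomorphism of Proposition \ref{7}: any nucleus on $\mathcal{O}(LgSpec(M))$ pulls back along the isomorphism to a nucleus on $SP(M)$, whose set of fixed points is spatial by Theorem \ref{21}, and being spatial is an intrinsic frame property, so the corresponding quotient of $\mathcal{O}(LgSpec(M))$ is spatial as well. This makes the proof essentially a one-line appeal to the previous theorem together with the identification of frames.
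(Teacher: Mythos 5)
Your proposal is correct and follows essentially the same route as the paper: invoke Theorem \ref{21} to get that $SP(M)$ is totally spatial, transfer this across the canonical isomorphism $SP(M)\cong\mathcal{O}(LgSpec(M))$ of Proposition \ref{7}, and conclude weak scatteredness of $LgSpec(M)$. The one inaccuracy is your claim that weak scatteredness is \emph{by definition} the total spatiality of $\mathcal{O}(X)$: the definition in \cite{H5} and \cite[8.1]{Pi} is in terms of weakly isolated points of closed subspaces, and the equivalence with total spatiality of the frame of opens is a theorem, namely \cite[Corollary 3.7]{Spa}, which is exactly what the paper cites for this last step; so replace the appeal to the definition by a citation of that result and the argument is complete.
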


\begin{proof}
By Proposition \ref{7} $SP(M)\cong\mathcal{O}(LgSpec(M))$, then $LgSpec(M)$ is weakly scattered by \cite[Corollary 3.7]{Spa}.
\end{proof}

For the definition of Krull dimension of a module $M$ see \cite{K}.

\begin{cor}
Let $M$ be projective in $\sigma[M]$. If $M$ has Krull dimension, then $LgSpec(M)$ is weakly scattered.
\end{cor}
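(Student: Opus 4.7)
The plan is to reduce this Corollary to the previous Corollary \ref{22} via the classical fact that modules with Krull dimension have finite uniform dimension, a property which is moreover inherited by factor modules.

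First, I would recall from the theory of Krull dimension (see \cite{K}) that if a module has Krull dimension, then it has finite uniform (Goldie) dimension. Indeed, an infinite independent family of submodules would produce a strictly descending chain violating the ordinal inductive definition of Krull dimension. Second, Krull dimension passes to subquotients: if $M$ has Krull dimension and $N\leq M$ is any submodule (in particular, any fully invariant submodule), then $M/N$ has Krull dimension bounded by that of $M$.

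Combining these two observations, for every fully invariant submodule $N\leq M$ the factor module $M/N$ has Krull dimension, hence finite uniform dimension. This is precisely the hypothesis of Corollary \ref{22}, so $LgSpec(M)$ is weakly scattered.

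The only thing to be careful about is that we are applying the uniform dimension conclusion not just to $M$ itself but to all fully invariant quotients $M/N$, which is exactly why we need the hereditary behavior of Krull dimension along quotients; no genuine obstacle arises since both facts are standard in the Krull dimension literature. Therefore the proof reduces to citing \cite{K} for these facts and invoking Corollary \ref{22}.
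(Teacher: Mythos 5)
Your proof is correct and follows the same route as the paper: pass Krull dimension to every factor module $M/N$, cite that Krull dimension implies finite uniform dimension (the paper uses \cite[Proposition 2.9]{K}), and then apply Corollary \ref{22}. No gaps.
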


\begin{proof}
If $M$ has Krull dimension then every factor module $M/N$ so does. Now, if $M/N$ has Krull dimension, by \cite[Proposition 2.9]{K} $M/N$ has finite uniform dimension. So by Corollary \ref{22} $LgSpec(M)$ is weakly scattered.
\end{proof}

\begin{rem}
Note that with the last corollary, for every commutative ring $R$ with Krull dimension, $Spec(R)$ is weakly scattered. In particular, for every noetherian commutative ring. Also  see \cite[Theorem 4.1]{Spa}.
\end{rem}

\begin{rem}\label{ob}
In \cite[Theorem 7.5]{H5}, for a $T_{0}$ space $S$, the frame of all nuclei over the topology of $S$ is boolean precisely when $S$ is scattered, so by \ref{22}, we observe that $N\mathcal{O}(LgSpec(M))\cong NSP(M)$ is boolean.
\end{rem}

If $R$ is a ring, the lowest radical of $R$ is defined as 
\[Nil_*(R)=\bigcap\{P\mid P\in Spec(Bil(R))  \}.\]
It is well known that $Nil_*(R)$ is contained in $Rad(R)$. 

\begin{dfn}\label{l}
Let $M\in{\RMod}$. The lowest radical of $M$ is the fully invariant submodule
\[Nil_*(M)=\bigcap\{Q\mid Q\in{Spec(\Lambda^{fi}(M))}\}.\]
\end{dfn}

\begin{prop}\label{8}
$Nil_*(M)\leq{Rad(M)}.$
\end{prop}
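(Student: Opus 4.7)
The plan is to reduce the inequality to the observation that every maximal submodule of $M$ sits above some member of $Spec(\Lambda^{fi}(M))$, so the intersection defining $Nil_*(M)$ is smaller than the intersection of all maximal submodules, i.e.\ $Rad(M)$.

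First I would dispose of the trivial case: if $M$ has no maximal submodule, then by the usual convention $Rad(M)=M$ and the inequality holds vacuously. So I assume the set of maximal submodules is non-empty and write
\[
Rad(M)=\bigcap\{\mathcal{M}\leq M\mid \mathcal{M}\text{ maximal}\}.
\]

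Next, I fix a maximal submodule $\mathcal{M}\leq M$. Then $M/\mathcal{M}$ is simple and hence prime as an $R$-module (it is $FI$-simple with zero as a prime element of $\Lambda^{fi}(M/\mathcal{M})$). By the example immediately after Corollary~\ref{1}, this forces $\mathcal{M}\in LgSpec(M)$, i.e.\ $\mathcal{M}$ is a large prime submodule of $M$. Now I apply Corollary~\ref{cor 6.6} to $P=\mathcal{M}$: there exists a fully invariant prime submodule, namely $\eta^{M}_{\mathcal{M}}(M)\in Spec(\Lambda^{fi}(M))$, which is the largest element of $Spec(\Lambda^{fi}(M))$ contained in $\mathcal{M}$. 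In particular,
\[
\eta^{M}_{\mathcal{M}}(M)\leq \mathcal{M}.
\]

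Finally, from the definition of $Nil_*(M)$ as the intersection of all elements of $Spec(\Lambda^{fi}(M))$, we have $Nil_*(M)\leq \eta^{M}_{\mathcal{M}}(M)\leq \mathcal{M}$ for \emph{every} maximal submodule $\mathcal{M}$ of $M$. Intersecting over all such $\mathcal{M}$ yields
\[
Nil_*(M)\leq \bigcap\{\mathcal{M}\mid \mathcal{M}\text{ maximal in } M\}=Rad(M),
\]
which finishes the proof.

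There is essentially no obstacle here beyond remembering to invoke the correct two ingredients already in the paper: the example that every maximal submodule is a large prime, and Corollary~\ref{cor 6.6}, which produces the fully invariant prime underneath it. The only subtlety worth flagging is the empty-maximal case, which must be treated separately since then $Rad(M)=M$ and $Nil_*(M)$ could still be a proper submodule.
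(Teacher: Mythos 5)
Your proof is correct, but it follows a different route from the paper's. The paper also starts from the fact that every maximal submodule is a large prime, but then argues globally: $Rad(M)$, being an intersection of large primes, satisfies the semiprime condition (Remark \ref{4.23}), and hence, by the external result \cite[Proposition 1.12]{Gold}, $Rad(M)$ is itself an intersection of elements of $Spec(\Lambda^{fi}(M))$, which immediately forces $Nil_*(M)\leq Rad(M)$. You instead argue maximal submodule by maximal submodule, using Corollary \ref{cor 6.6} (via Propositions \ref{pro 6.4} and \ref{pro 6.5}) to place the fully invariant prime $\eta^{M}_{\mathcal{M}}(M)\in Spec(\Lambda^{fi}(M))$ underneath each maximal $\mathcal{M}$, and then intersect. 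Your version stays entirely inside the machinery already developed in the paper (the $\eta$-preradicals and Corollary \ref{cor 6.6}) and avoids the citation to \cite{Gold}, while the paper's version is shorter given that citation and yields the slightly stronger intermediate fact that $Rad(M)$ is semiprime, i.e.\ a fixed point of $\mu$. Your explicit treatment of the case with no maximal submodules (where $Rad(M)=M$ and the inequality is trivial) is a reasonable precaution, since the semiprime formulation in the paper tacitly assumes $Rad(M)\neq M$; note also that your appeal to the example after Corollary \ref{1} is legitimate here because the standing hypothesis that $M$ is projective in $\sigma[M]$ gives quasi-projectivity, which that example needs.
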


\begin{proof}
Since every maximal submodule is a large prime submodule by Remark \ref{4.23}, $Rad(M)$ is semiprime in $M$. So $Rad(M)$ is an intersection of elements of $Spec(\Lambda^{fi}(M))$ by \cite[Proposition 1.12]{Gold}. This implies that $Nil_*(M)\leq{Rad(M)}.$ 
\end{proof}

If 
\[Max(M)=\{\mathcal{M}<M\mid\mathcal{M}\text{ is a maximal submodule }\},\]
we have that $Max(M)\subseteq {LgSpec(M)}$. Notice that $Max(M)$ is a subspace of $LgSpec(M).$ 

Suppose that $M$ is coatomic. Then we have the adjunction 
\[\xymatrix{\Lambda^{fi}(M)\ar@/^/[r]^{\textit{m}} & \mathcal{O}(Max(M))\ar@/^/[l]^{\textit{m}_*}}\]
defined as 
\[\textit{m}(N)=\{\mathcal{M}\in{Max(M)}\mid N\nsubseteq\mathcal{M}\},\] and
\[\textit{m}_*(A)=\sum\{K\in{\Lambda^{fi}(M)}\mid\textit{m}(K)\subseteq{A}\}.\]
This adjunction can be factorized as
\[\xymatrix@=25mm{\Lambda^{fi}(M)\ \ \ar@/^/[r]^{\mathcal{U}}\ar@/^/[rd]^{\textit{m}} & \mathcal{O}(LgSpec(M))\ \ \ar@/^/[l]^{\mathcal{U}_*}\ar@/^/[d]^{I} \\ & \mathcal{O}(Max(M))\ar@/^/[ul]^{\textit{m}_*}\ar@/^/[u]^{I_*}} ,\]

\noindent where $I\colon\mathcal{O}(LgSpec(M))\rightarrow\mathcal{O}(Max(M))$ is given by $I(A)=A\cap{Max(M)}$.

Following the proof of Proposition \ref{41}, $(\textit{m}_*\circ\textit{m})(N)$ is the largest fully invariant submodule contained in 
\[\bigcap_{\mathcal{M}\in\mathcal{V}(N)\cap{Max(M)}}{\mathcal{M}}\]
Then 
\[(\textit{m}_*\circ\textit{m})(0)=\bigcap_{\mathcal{M}\in{Max(M)}}{\mathcal{M}}=Rad(M)\]

Note that the proof of Theorem \ref{3.15} can be applied to $\tau:=\textit{m}_*\circ\textit{m}$, so $\tau$ is a multiplicative nucleus. Hence $R(M):=\Lambda^{fi}(M)_\tau$ is a frame by Corollary \ref{05a}. Moreover, $R(M)$ is a subframe of $SP(M)$.

\begin{dfn}
Let $M$ be an $\R$-module. $M$ is {\em co-semisimple} if every simple module in $\sigma[M]$ is $M$-injective. 

If $M= {_{\R}\R}$, this is the definition of a {\em left $V$-ring. }
\end{dfn}

The following characterization of co-semisimple modules is given in \cite[23.1]{W}

\begin{prop}
For an $\R$-module $M$ the following statements are equivalent:
\begin{enumerate}
	\item $M$ is co-semisimple.
	\item Any proper submodule of $M$ is an intersection of maximal submodules.
\end{enumerate}
\end{prop}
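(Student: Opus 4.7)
The plan is to prove the two implications separately, exploiting the interplay between maximal submodules of $M$ and simple modules in $\sigma[M]$ via Zorn's lemma.

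For (1)$\Rightarrow$(2), fix a proper submodule $N<M$ and an element $x\in M\setminus N$; it will suffice to produce a maximal submodule of $M$ that contains $N$ and misses $x$. A standard Zorn's lemma argument on the collection of submodules of $M$ containing $N$ and avoiding $x$ yields a maximal such $K$. I would then observe that in $M/K$ every non-zero submodule meets $\bar{x}=x+K$ by the maximality of $K$, so $R\bar{x}$ is an essential simple submodule of $M/K$ lying in $\sigma[M]$. Hypothesis (1) makes $R\bar{x}$ an $M$-injective module, so the inclusion $R\bar{x}\hookrightarrow M/K$ splits; but an essential direct summand must be the whole module, so $M/K$ is simple and $K$ is maximal. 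Intersecting over all $x\in M\setminus N$ yields $N$ as an intersection of maximal submodules.

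For the converse (2)$\Rightarrow$(1), let $S\in\sigma[M]$ be simple and let $f\colon L\to S$ be a homomorphism from a submodule $L\leq M$; I need to extend $f$ to $M$. The zero case is trivial, so assume $f$ is surjective with kernel $K\subsetneq L$. Applying (2) to $K$, write $K=\bigcap_i\mathcal{M}_i$ as an intersection of maximal submodules of $M$. Since $K\subsetneq L$, at least one $\mathcal{M}_i$ fails to contain $L$, so by maximality $L+\mathcal{M}_i=M$. Because $K\subseteq L\cap\mathcal{M}_i\subsetneq L$ and $L/K\cong S$ is simple, it follows that $L\cap\mathcal{M}_i=K$. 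The second isomorphism theorem then gives canonical identifications $S\cong L/K\cong (L+\mathcal{M}_i)/\mathcal{M}_i=M/\mathcal{M}_i$ compatible with $f$, and the composition $M\twoheadrightarrow M/\mathcal{M}_i\cong S$ provides the desired extension, so $S$ is $M$-injective.

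The step that will require most care is the second direction, specifically the identification of the abstractly given simple module $S\in\sigma[M]$ with a quotient $M/\mathcal{M}_i$ in a manner compatible with the prescribed map $f$. The key trick is to apply (2) to $\ker f$ rather than to $L$ itself or to $0$, which selects a maximal submodule $\mathcal{M}_i$ adapted to $f$; once this choice is made the first-isomorphism theorem produces the compatibility automatically. The first direction is more routine, modelled on the classical Zorn argument that identifies maximal submodules from elements outside a given proper submodule.
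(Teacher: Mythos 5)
Your proof is correct. Note that the paper itself gives no argument for this proposition: it is quoted verbatim from Wisbauer \cite[23.1]{W}, so your write-up supplies details the paper only cites, and it is the classical argument. Both implications check out: in (1)$\Rightarrow$(2) the Zorn's lemma choice of $K$ maximal among submodules containing $N$ and avoiding $x$ does make $R\bar{x}$ a simple essential submodule of $M/K$ lying in $\sigma[M]$, and in (2)$\Rightarrow$(1) applying the hypothesis to $\ker f$, picking $\mathcal{M}_i\not\supseteq L$, and using simplicity of $L/K$ to get $L\cap\mathcal{M}_i=K$ gives the extension $M\twoheadrightarrow M/\mathcal{M}_i\cong L/K\xrightarrow{\ \bar{f}\ }S$ compatible with $f$, exactly as you say. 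The one step you should make explicit is the splitting of $R\bar{x}\hookrightarrow M/K$: hypothesis (1) gives $M$-injectivity, i.e.\ extendability of maps defined on submodules of $M$, not of $M/K$, so you either invoke the standard fact that an $M$-injective module is injective relative to every factor module of $M$ (\cite[16.2]{W}), or argue directly by extending $\pi|_{K+Rx}\colon K+Rx\to R\bar{x}$ (where $\pi\colon M\to M/K$ is the projection) to some $g\colon M\to R\bar{x}$ and observing that $g$ vanishes on $K$, hence factors through $M/K$ and restricts to the identity on $R\bar{x}$, yielding the retraction. With that remark added, the proof is complete.
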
 
 
\begin{cor}
Let $M$ be a co-semisimple module. Then $\Lambda^{fi}(M)$ is a frame.
\end{cor}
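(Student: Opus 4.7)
The plan is to exploit the nucleus $\tau = m_{*} \circ m$ defined in the discussion immediately preceding the corollary, and show that under the co-semisimple hypothesis, $\tau$ acts as the identity on $\Lambda^{fi}(M)$. Since $R(M) = \Lambda^{fi}(M)_{\tau}$ has already been shown to be a frame (via Corollary \ref{05a} applied to the multiplicative nucleus $\tau$), it will follow at once that $\Lambda^{fi}(M) = R(M)$ is a frame.

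First I would recall, from the description preceding the corollary (obtained in the same way as Proposition \ref{41}), that for every $N \in \Lambda^{fi}(M)$, $\tau(N)$ is the largest fully invariant submodule of $M$ contained in the intersection
\[\bigcap_{\mathcal{M} \in \mathcal{V}(N) \cap Max(M)} \mathcal{M}.\]
Next, let $N \in \Lambda^{fi}(M)$ be arbitrary. If $N = M$, then trivially $\tau(M) = M$ (the intersection is taken to be $M$, and $M$ itself is fully invariant). If $N \neq M$, then by the characterization of co-semisimple modules in the preceding proposition, $N$ is an intersection of maximal submodules of $M$, i.e.
\[N = \bigcap \{ \mathcal{M} \in Max(M) \mid N \subseteq \mathcal{M} \} = \bigcap_{\mathcal{M} \in \mathcal{V}(N) \cap Max(M)} \mathcal{M}.\]
Now $N$ itself is a fully invariant submodule contained in (indeed equal to) this intersection, so $N \leq \tau(N)$; and since $\tau(N)$ is contained in the intersection which equals $N$, we get $\tau(N) \leq N$. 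Hence $\tau(N) = N$.

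This shows that every element of $\Lambda^{fi}(M)$ is a fixed point of $\tau$, so $\Lambda^{fi}(M) = \Lambda^{fi}(M)_{\tau} = R(M)$. Since $R(M)$ is a frame, the conclusion follows. I do not foresee a significant obstacle here: the only subtlety is that the co-semisimplicity characterization is stated for arbitrary proper submodules, and one needs to notice that when $N$ is fully invariant the resulting intersection representation automatically gives $N$ back as a fully invariant element, so that maximality of $\tau(N)$ among fully invariant submodules contained in the intersection forces equality $\tau(N) = N$.
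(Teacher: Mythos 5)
Your proof is correct, but it takes a different route from the paper's. The paper disposes of the corollary in one line: since $M$ is co-semisimple, every proper fully invariant submodule is an intersection of maximal submodules, each maximal submodule is a large prime (so, by Remark \ref{4.23} and Definition \ref{sp}, such a submodule is semiprime), hence $SP(M)=\Lambda^{fi}(M)$, and Proposition \ref{7} already says $SP(M)=\Lambda^{fi}(M)_\mu$ is a frame. You instead use the other nucleus introduced just before the corollary, $\tau=\textit{m}_*\circ\textit{m}$ built from $Max(M)$, and show that co-semisimplicity makes $\tau$ the identity on $\Lambda^{fi}(M)$, so that $\Lambda^{fi}(M)=\Lambda^{fi}(M)_\tau=R(M)$, which the paper has already observed is a frame via Corollary \ref{05a}. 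Both arguments amount to exhibiting $\Lambda^{fi}(M)$ as the fixed-point set of a multiplicative nucleus; yours is tailor-made for the hypothesis (the co-semisimple characterization literally says each proper $N$ equals $\bigcap\{\mathcal{M}\in Max(M)\mid N\subseteq\mathcal{M}\}=\bigcap_{\mathcal{M}\in\mathcal{V}(N)\cap Max(M)}\mathcal{M}$, which together with the description of $\tau(N)$ as the largest fully invariant submodule inside that intersection forces $\tau(N)=N$), and it bypasses the notion of semiprime submodule altogether; the paper's route, in exchange, identifies $\Lambda^{fi}(M)$ with $SP(M)\cong\mathcal{O}(LgSpec(M))$ and so keeps the spatial information in view. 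Two points you should make explicit: the $\tau$-adjunction is set up in the paper under the hypothesis that $M$ is coatomic, so note that co-semisimplicity implies coatomicity (a proper submodule is a nonempty intersection of maximal submodules, hence contained in one); and, exactly as in the paper's proof, the section's standing assumption that $M$ is projective in $\sigma[M]$ is what makes $\Lambda^{fi}(M)$ a quasi-quantale and $\tau$ a multiplicative nucleus, so the corollary is proved under that standing assumption in either approach.
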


\begin{proof}
Since $M$ is co-semisimple then $SP(M)=\Lambda^{fi}(M)$.
\end{proof}

\begin{cor}
Let $M$ be a duo co-semisimple module then $\Lambda(M)$ is a frame. 
\end{cor}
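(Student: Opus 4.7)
The plan is to chain together two facts that have already been established in the paper. The previous corollary shows that for a co-semisimple module $M$ the equality $SP(M)=\Lambda^{fi}(M)$ holds, and hence $\Lambda^{fi}(M)$ inherits the frame structure of $SP(M)$ established in Proposition \ref{7}. So the only work left is to identify $\Lambda(M)$ with $\Lambda^{fi}(M)$, and this is exactly what the duo hypothesis provides by the definition recalled in the remark after Corollary \ref{1}.

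More precisely, I would proceed as follows. First, invoke the hypothesis that $M$ is co-semisimple, together with the characterization that every proper submodule is an intersection of maximal submodules, to apply the previous corollary and conclude that $\Lambda^{fi}(M)$ is a frame. Next, use that $M$ is duo, so $\Lambda(M)=\Lambda^{fi}(M)$ as complete lattices (with the same order, meets, and joins, since the identification is just equality of underlying sets). Combining these two observations immediately yields that $\Lambda(M)$ is a frame.

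There is essentially no obstacle here: the statement is a direct consequence of its two immediate predecessors, and no further verification on products or nuclei is required because the frame structure on $SP(M)$ has already been shown to be inherited by $\Lambda^{fi}(M)$ under the co-semisimple hypothesis. The only thing to check, which is transparent from the definitions, is that the frame operations on $\Lambda^{fi}(M)$ coincide with those of $\Lambda(M)$ under the duo identification, i.e.\ that arbitrary sums and finite intersections of (fully invariant) submodules are computed the same way in both lattices.
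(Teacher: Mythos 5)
Your proposal is correct and follows the paper's own argument exactly: the duo hypothesis gives $\Lambda(M)=\Lambda^{fi}(M)$, and the preceding corollary (co-semisimple implies $SP(M)=\Lambda^{fi}(M)$, hence a frame) does the rest. The extra remark about the lattice operations coinciding under the identification is a harmless elaboration of what the paper leaves implicit.
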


\begin{proof}
In this case $\Lambda(M)=\Lambda^{fi}(M)$.
\end{proof}

\section*{Acknowledgments}

The authors wish to thank Professor  Harold Simmons  for  sharing \cite{H} which inspired the topics in this paper. Finally, the authors are grateful for the referee's many valuable comments and helpful suggestions  which have improved this one.

\footnotesize
 \vskip2mm
\noindent	 \textsc{Mauricio Medina B\'arcenas$^{(\ast)}$,  Angel Zald\'ivar Corichi$^{\dag}$.}\\
Instituto de Matem\'aticas, Universidad Nacional Aut\'onoma de M\'exico, \'area de Investigaci\'n Cient\'ifica, Circuito Exterior, C.U., 04510, M\'exico, D.F., M\'exico.\vspace{5pt}

\noindent \textsc{Martha Lizbeth Shaid Sandoval Miranda$^{(\ddag)}$.}\\
Facultad de Ciencias, Universidad Nacional Aut\'onoma de M\'exico.\\
Circuito Exterior, Ciudad Universitaria, 04510, M\'exico, D.F., M\'exico.
\begin{itemize}
\item[ e-mails:]  $(\ast)$ mmedina@matem.unam.mx \;  {\it corresponding author}
 \item[\hspace{33pt}]  (\dag) zaldivar@matem.unam.mx 
\item[\hspace{33pt}] (\ddag) marlisha@ciencias.unam.mx
\end{itemize}
\end{document}